\tikzset{
 dot/.style={draw, circle, inner sep=0.5pt, fill=blue}
}
\DeclareDocumentCommand\rootG{O{}m}{
    \pgfkeys{/rootG, default, #1}
    \begin{scope}[shift={#2},rotate=\rootGRotate,scale=\rootGScale,thick]
        \draw[fill=cyan!10!white,fill opacity=0.5] (0,0) .. controls (-1.2,1.5) and (-1.2,2.5) .. (0,2.5) 
            .. controls (1.2,2.5) and (1.2,1.5) .. (0,0);
        \node at (0,1.5) {$G_r$};
    \end{scope}
}
\DeclareDocumentCommand\longpath{m m m g g}{
\def \n {#3}  \def \startn {#1}  \def \endn {#2}
\IfNoValueTF{#5}{\def \p {p}}{\def \p {#5} }
\foreach \v in {0,...,\n} {\coordinate (\p\v) at ($(\startn)!\v/\n!(\endn)$);}
\IfNoValueTF{#4}
{\foreach \x [evaluate=\x as \y using int(\x-1)] in {1,...,\n}{\draw[black](\p\y) node {}--(\p\x) node {};}}
{\foreach \x [evaluate=\x as \y using int(\x-1)] in {1,...,\n}{
\IfValueInList \x {#4}
{\foreach \v in {4,5,6}{\fill ($ (\p\y)!\v/10! (\p\x) $) circle (0.5pt);}}
{\draw[black](\p\y) node {}--(\p\x) node {};}
}}}
\DeclareDocumentCommand\nstar{o m m m m g g}{
\def \o {#2}  \def \n {#3}  \def \deg {#4}  \def \len {#5}
\IfNoValueTF{#1}{\def \rot {0}}{\def \rot {#1} }
\IfNoValueTF{#7}{\def \p {p}}{\def \p {#7} }
\foreach \x in {0,1,...,\n} {\coordinate[rotate around={-0.5*\deg+\rot: (\o)}](\p\x) at ($(\o)+({\deg * \x/\n}: \len)$);}
\foreach \x[evaluate=\x as \y using int(\x+1),evaluate=\x as \z using int(\x-1)] in {0,1,...,\n}{
  \IfValueInList \x {#6}{\foreach \v in {2,4,6}{
  \fill[black] ($ (\p\z)!\v/8! (\p\y) $) circle (0.5pt);
   }}
  {\draw (\o) node {}-- (\p\x)node{};}
}
}
\DeclareDocumentCommand\star{o m m m m g g}{
\def \o {#2}  \def \n {#3}  \def \deg {#4}  \def \len {#5}
\IfNoValueTF{#1}{\def \rot {0}}{\def \rot {#1} }
\IfNoValueTF{#7}{\def \p {p}}{\def \p {#7} }
\foreach \x in {0,1,...,\n} {\coordinate[rotate around={-0.5*\deg+\rot: (\o)}](\p\x) at ($(\o)+({\deg * \x/\n}: \len)$);}
\foreach \x[evaluate=\x as \y using int(\x+1),evaluate=\x as \z using int(\x-1)] in {0,1,...,\n}{

  {\draw (\o) node {}-- (\p\x)node{};}
}
}
\DeclareDocumentCommand\fitellipsis{o m m m m}{
\IfNoValueTF{#1}{\def \fc {gray}}{\def \fc {#1}}
\draw[fill=\fc!25, fill opacity=0.5] let \p1=(#2), \p2=(#3), \n1={atan2(\y2-\y1,\x2-\x1)}, \n2={veclen(\y2-\y1,\x2-\x1)} in ($ (\p1)!0.5!(\p2) $) ellipse [radius=\n2/2+#4pt, y radius=#5pt, rotate=\n1];
\foreach \v in {4,5,6} {
\fill ($ (#2)!\v/10!(#3) $) circle (0.5pt);
}
}
\newcommand{\dotellipsis}[4] 
{\draw[dashed] let \p1=(#1), \p2=(#2), \n1={atan2(\y2-\y1,\x2-\x1)}, \n2={veclen(\y2-\y1,\x2-\x1)}
    in ($ (\p1)!0.5!(\p2) $) ellipse [radius=\n2/2+#3pt, y radius=#4pt, rotate=\n1];
}
\DeclareDocumentCommand\hyperpath{o m m m }{
\IfNoValueTF{#1}{\def \fco {gray}}{\def \fco {#1}}
\pgfmathtruncatemacro{\len}{#4}
\foreach \v in {0,...,\len} {
	\coordinate (p\v) at ($(#2)!\v/\len!(#3)$);
	 }

\pgfmathtruncatemacro{\lena}{\len-1}
\foreach \cur  [count=\next from 1] in {0,...,\lena} {
	\fitellipsis[\fco]{p\cur}{p\next}{8}{6};
}
\foreach \v in {0,...,\len} {
   \node[shade,shading=ball,circle,ball color=red!90!white,minimum size=1.8pt,inner sep=0pt] at  (p\v) {};
	 }
}
\newcommand{\partellipsis}[4]{
\tkzDefMidPoint(#1,#2) \tkzGetPoint{tikzC}
\tkzCalcLength[cm](#1,tikzC)\tkzGetLength{dAC}
\tkzFindSlopeAngle(#1,#2)\tkzGetAngle{tkzang}
\draw[rotate around={\tkzang: (tikzC)}] ($(tikzC) + ({(\dAC cm + #3 pt)* cos(180-70)}, { #4 pt * sin(180-70)})$) arc [start angle=180-70, end angle=180+70, x radius=\dAC cm+ #3 pt, y radius= #4 pt];
\draw[rotate around={\tkzang: (tikzC)}] ($(tikzC) + ({(\dAC cm + #3 pt) * cos(-70)}, { #4 pt * sin(-70)})$) arc [start angle=-70, end angle=70, x radius=\dAC cm+ #3 pt, y radius= #4 pt];
\node[rotate around={\tkzang: (tikzC)}] at (tikzC) {$\cdots$};
}
\newtheorem{thm}{Theorem}[section]
\newtheorem{lem}{Lemma}[section]
\newtheorem{proof*}{proof}[section]
\newtheorem{cor}{Corollary}[section]
\newtheorem{con}{Conjecture}[section]
\newtheorem{question}{Question}
\newtheorem{fact}{Fact}
\newcommand{\x}{{\ensuremath{ \boldsymbol{x}}}}
\newcommand{\y}{{\ensuremath{ \mathbf{y}}}}
\newcommand{\z}{{\ensuremath{ \mathbf{z}}}}
\DeclareMathOperator{\sgn}{sgn}
\title{The distance spectral radius of $k$-uniform hypertrees with given number of vertices of maximum degree.
\footnote{\noindent This work is    partially supported by 
			the National Natural Science Foundation of China (No.~12271182).\\ 
   E-mail: 2111152@tongji.edu.cn (X. Liu),shan\_haiying@tongji.edu.cn (H. Shan)}}
\author{ Xiaoqi, Liu,\, Haiying Shan\thanks{Corresponding author}
\\ \small School of Mathematical Sciences, Key Laboratory of Intelligent Computing and Applications\\ \small(Ministry of Education), Tongji University, Shanghai, China}
\date{}
\begin{document}
\maketitle
	\begin{abstract}
  This paper investigates the influence of two graft transformations on the distance spectral radius of connected uniform hypergraphs. Specifically, we study $k$-uniform hypertrees with given size, maximum degree and number of vertices of maximum degree, and give the structure of such hypergraph with maximum distance spectral radius.
		
\noindent{\bfseries Keywords: }Distance spectral radius; Hypergraph; Graft transformation; Uniform hypertree\\
{\bf 2020 Mathematics Subject Classification:} 05C50; 15A18
	\end{abstract}
\section{Introduction}
A hypergraph $G$ consists of a nonempty vertex set $V(G)$ and an edge set $E(G)$, where each edge $e\in E(G)$ is a nonempty subset of $V(G)$. The order of $G$ is denoted as $|V(G)|$, and the size of $G$ is denoted as $|E(G)|$. For an integer $k\geq2$, a hypergraph $G$ is said to be $k$-uniform if every edge has size $k$.

For $u,\ v\in V(G)$, a walk from $u$ to $v$ in $G$ is defined as a sequence of vertices and edges \\ $(v_0,e_1,v_1,\cdots,v_{p-1},e_p,v_p)$, where $v_0=u$ and $v_p=v$. In this walk, each edge $e_i$ contains the vertices $v_{i-1}$ and $v_{i}$, with $v_{i-1}\neq v_{i}$ for $i=1,\cdots,p$. The value $p$ represents the length of this walk. A path is a walk where all $v_i$ and $e_i$ are distinct. A cycle is a walk containing at least two distinct edges, with all $e_i$ and $v_i$ distinct except for $v_0=v_p$. If there is a path from $u$ to $v$ for any $u,\ v\in V(G)$, then we say that $G$ is connected. A hypertree is a connected hypergraph with no cycles. 

Let $G$ be a connected hypergraph. For $u,v\in V(G)$, the distance between $u$ and $v$ is defined as the length of the shortest path from $u$ to $v$ in $G$, denoted as $d_G(u,v)$. Specifically, $d_G(u,u)=0$. The diameter of $G$ is the maximum distance between all vertex pair of $G$. The distance matrix of $G$ is the $n\times n$ matrix $D(G)=(d_G(u,v))_{u,v\in V(G)}$. The eigenvalues of $D(G)$ are called the distance eigenvalues of $G$. Since $D(G)$ is real and symmetric, the distance eigenvalues of $G$ are real. The distance spectral radius of $G$, denoted by $\rho(G)$, is the largest distance eigenvalue of $G$. Note that $D(G)$ is an irreducible nonnegative matrix. The Perron–Frobenius theorem implies that $\rho(G)$ is simple, and there is a unique positive unit eigenvector that corresponds to $\rho(G)$, which is called the distance Perron vector of $G$, denoted by $\mathbf{x}(G)$. 

The study of distance eigenvalues has a rich history in graph theory.  Its origins can be traced back to the pioneering work of Graham and Pollack(1971) \cite{graham} in which they described a relationship between the number of negative distance eigenvalues
and the addressing problem in data communication system. Edelberg et al.\cite{EDELBERG197623}and Graham and Lov\'asz\cite{GRAHAM197860} conducted early research on the distance eigenvalues, laying the foundation for subsequent studies. 
Lin and Zhou\cite{lin1} determined the unique graphs with maximum distance spectral radius among trees with given number of vertices of maximum degree.  Lin and Zhou\cite{lin2,lin3} determined the the unique $k$-uniform hypertrees with the maximum, second maximum, minimum, second minimum distance spectral radius. Moreover, they presented the exceptional structure of k-uniform unicyclic hypergraphs that have fixed sizes and possess minimum and second minimum distance spectral radii, and provided the possible structure of the $k$-uniform unicyclic hypergraph(s) of fixed size with maximum distance spectral radius. Liu et al.\cite{liu1} obtained the minimum distance spectral radius of $k$-uniform hypertrees with given matching number. Wang and Zhou\cite{MR4600138} determine the unique graph that maximizes the distance spectral radius over all cacti with fixed numbers of vertices and cycles in the context of hypertrees. For more comprehensive discussions on distance eigenvalues, interested readers may refer to the publications cited as references \cite{survey1,survey2,WangZhoou2020}.

Let $\mathbb{T}_k(m,\Delta,n)$ be the set of $k$-uniform hypertrees of size $m$ with $n$ vertices of maximum degree $\Delta$. 

In this paper, motivated by the above results, we determine the unique $k$-uniform hypertree with maximum distance spectral radius among $k$-uniform hypertrees in $\mathbb{T}_k(m,\Delta,n)$ with $\Delta \geq 3$, which can be seen as a generalization of the work in \cite{lin1}.

\section{Notations and preliminaries}
Let $G=(V,E)$ denote a (hyper)graph with vertex set $V$ and edge set $E$. For any vertex of $G$, $N_G(v)$ denotes the set of vertices adjacent to $v$, and $E_G(v)$ denotes the set of edges incident to $v$. The degree of a vertex $v$ in $G$, denoted as $d_G(v)$, is the number of edges in $E_G(v)$.
An edge $e$ in hypergraph $G$ is called a pendant edge at vertex $u$ if $u$ is the only vertex with degree greater than 1. The other vertices in $e$ are called pendant vertices of $G$ or pendant neighbors of $u$.

A $k$-uniform hypertree with $m$ edges is a hyperstar, denoted by $S^{k}_{m}$, if  all  edges share a common vertex.
A $k$-uniform loose path with $m\ge 1$ edges, denoted by $P^{k}_{m}$, is the $k$-uniform hypertree whose vertices and edges may be labeled as $(u_0, e_1, u_1,\dots, u_{m-1}, e_m, u_m)$ such that the vertices $u_1,\dots, u_{m-1}$ are of degree $2$, and all other vertices of $G$ are of degree $1$.

For a hypergraph $G$ with vertex set $V(G)$ and edge set $E(G)=\{e_1,\cdots,e_m\}$, \emph{weak vertices deletion} of $U \subset V(G)$, denoted by $G\backslash U$,  creates the hypergraph $G' = (V(G'), E(G'))$ where $V(G')=V(G)\setminus U$ and $E(G')=\{e_1',\cdots,e_m'\}$, such that $e_i'=e_i\setminus U$ for $1\leq i\leq m$. $G\backslash U$ is written as $G \backslash \{u\}$ if $U=\{u\}$. For $e_i\in E(G)$, $G''=G-e_i$ is the hypergraph with $V(G'')=V(G'')$, $E(G'')=E(G)\setminus e_i$.

For a nonempty set $X \subseteq V (G)$, $G[X]$ represents the subhypergraph induced by $X$, where $G[X]$ has vertex set $X$ and edge set of $\{e \subseteq X :  e \in E(G)\}$. Furthermore, $\sigma(X,\mathbf{x})$ denotes the sum of the entries of the vector $\mathbf{x}$ corresponding to the vertices in $X$, if $\mathbf{x}=\mathbf{x}(G)$, the alternative notation $\sigma_G(X)$ or $\sigma(X)$ can be used for $\sigma(X,\mathbf{x})$,


Given a graph $G$ and a vertex $r\in V(G)$, the \emph{rooted graph $G_r$} is the graph $G$ with the vertex $r$ labeled.  Let $V'(G)$ be the set of all nonroot vertices of $G$, that is, $V'(G)=V(G)\backslash\{r\}$.


 Let $U=(v_1,\cdots,v_s) \in V(H)^s$ and $\mathcal{G}$ be a sequence of $s$ rooted graphs $G_{r_1},G_{r_2},\cdots,G_{r_s}$ with root vertices $r_1,\dots,r_s$. The \emph{rooted product} of $H$ and $\mathcal{G}$, denoted $H(U,\mathcal{G})$, is the graph formed by attaching each root of $G_{r_i}$
in $\mathcal{G}$ to its corresponding vertex $v_i$ in $H$. We denote $G_{v_i}$ be the subhypergraph of $H(U,\mathcal{G})$ induced by $(V(G_{r_i})\cup \{v_i\})\setminus \{r_i\}$ with root vertex $v_i$


Let $H=P_m(U,\mathcal{G})$ be a rooted product of the loose path $P_m=(u_0, e_1, u_1,\dots, u_{m-1}, e_m, u_m)$ and the sequence of rooted graphs $\mathcal{G}$. For any subset $Q$ of $U$, we write $H[Q]$ as a subgraph of $H$ which is the rooted product of the subpath $P_m[Q]$ of $P_m$ and the subsequence of $\mathcal{G}$ corresponding to $Q$, where $P_m[Q]$ is the minimal subpath of $P_m$ that contains all vertices in $Q$. Especially, we denote $H^{u_i}$ as the component of $H-e_{i+1}$ containing $u_0$, $H_{u_i}$ as the component of $H-e_{i}$ containing $u_d$, respectively. We take $V'(H_{u_i})=V(H_{u_i})\setminus V(G_{u_i})$, $V'(H^{u_i})=V(H^{u_i})\setminus V(G_{u_i})$. For $0\leq c<d\leq m$, we denote $H_{u_c}^{u_d}$ as the subhypergraph of $H$ induced by $V(H)\setminus (V'(H^{u_c})\cup V'(H_{u_d})\cup V'(G_{u_c})\cup V'(G_{u_d}))$. 
 
With respect to the hypergraph mentioned above $H=P_m(U,\mathcal{G})$, if each rooted graph $G_{r_i}$ in $\mathcal{G}$ is a hyperstar, we call it a caterpillar. Furthermore, considering the situation where $U=\{u_i \mid 0\leq i \leq m\}$ and $\mathcal{G}=(G_{r_i}\mid 0\leq i \leq m)$, where the rooted graph $G_{r_i}$ is a trivial graph for $i=0, m$ or $a<i<m-b$, and it is $S_{\Delta-2}$ with its center as the root vertex for others, we can rewrite $H$ as $C(m,\Delta,a,b)$. Furthermore, $C(m,\Delta,a,b)$ is denoted as $C_k(m,\Delta,a,b)$ when $H$ is uniform with $k$. 

Let $P_{s+t}^k=(u_0,e_1,u_1,\cdots,u_{s+t-1},e_{s+t},u_{s+t})$ and $H=P_{s+t}^k(U,\mathcal{G})$, where $U=(u_1,\cdots,u_{s+t-1})$, $\mathcal{G}=(G_{r_1},\cdots,G_{r_{s+t-1}})$, and rooted graphs in $\mathcal{G}$ are $S_c^k$ with its center as the root vertex, except for $G_{r_s}$. Denote the graph $H$ by $G_c(s,t)$.


Stevanovi\'{c}  and Dragan investigated the adjacent spectral radius radius of graphs that have path-like or star-like structures attached to them \cite{MR3450917}. In what follows, we will introduce the definition of hypergraph counterparts of these graphs (refer to Fig. 1). Suppose $H$ is a connected hypergraph with vertices $u,v \in V(H)$, and $G_r$ represents a rooted graph with vertex $r$ as its root. The hypergraph, $H_{u,v}(P_s,P_t)$, is constructed from $H$ by attaching two pendant hyperpaths, $P_s$ and $P_t$, of lengths $s$ and $t$ to vertices $u$ and $v$, respectively. Furthermore, the hypergraph $H_{u,v}(P_s,P_t,G_r)$ is formed by identifying the root of a separate copy of $G_r$ at each degree 2 vertex on the pendant hyperpaths $P_s$ and $P_t$. If $u = v$, we condense the notation to $H_{u}(P_s,P_t,G_r)$. 
When $H,P_{s},P_{t},G_{r}$ are $k$ uniform hypergraphs, $H_{u,v}(P_s,P_t,G_r)$ is denoted by $H_{u,v}(s,t,G_r)$. In particular, if $G_r$ is a hyperstar with its center as the root vertex and with size $c$, we denote $H_{u}(s,t,G_r)$ and $H_{u,v}(s,t,G_r)$ as $H_{u}(s,t,c)$ and $H_{u,v}(s,t,c)$, respectively.

Let $H' = H_{u,v}(P_s,P_t,G_r)$, with $w \in V(H')$ being the pendant neighbor of a pendant vertex on subgraph $P_t$, and $w' \in V(H')$ as a pendant vertex on subgraph $P_s$. The hypergraph $H''$ emerges from $H'$ by moving the edges in $E_v(H')$, excluding the non-pendant edge of $P_t$, from $w$ to $w'$. We say that $H''$ is obtained by graft transformation. 

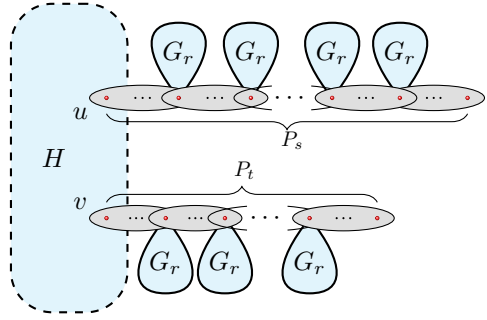
\begin{figure}[h]
    \centering

\begin{tikzpicture}[scale=0.8]
\coordinate (A1) at (-3,1) ;
\coordinate (B1) at (3,1);
\coordinate (C1) at (-0.6,1);
\coordinate (C2) at (-0.6,2);
\coordinate (E1) at (-1.8,1);
\coordinate (E2) at (-1.8,2);
\coordinate (D1) at ($(A1)!5/8!(B1)$);
\coordinate (D2) at (0.7,2);
\coordinate (F1) at (1.88,1);
\coordinate (F2) at (1.9,2);
\coordinate (A) at (-3,-1) ;
\coordinate (B) at (1.5,-1);
\coordinate (C) at ($(A)!3.5/8!(B)$);
\coordinate (C3) at (-1,-2);
\coordinate (E) at (-2.02,-1);
\coordinate (E3) at (-2,-2);
\coordinate (D) at ($(A)!6/8!(B)$);
\coordinate (D3) at (0.4,-2);

\rootG[rotate=0,scale=0.5]{(C1)};
\rootG[rotate=0,scale=0.5]{(D1)};
\rootG[rotate=0,scale=0.5]{(E1)};
\rootG[rotate=0,scale=0.5]{(F1)};
\rootG[rotate=180,scale=0.5]{(C)};
\rootG[rotate=180,scale=0.5]{(D)};
\rootG[rotate=180,scale=0.5]{(E)};

\node[draw,dashed,fill=cyan!10!white,fill opacity=0.5,thick,fit=($(A1)+(80: 15mm)$)($(A)+(-80: 1.5cm)$)($(A1)+(180: 15mm)$)($(A)+(180: 1.5cm)$),rounded corners=.55cm,inner sep=2pt]    {};

\hyperpath{A1}{C1}{2}
\partellipsis{C1}{D1}{8}{6};
\hyperpath{B1}{D1}{2}
\draw [decoration={brace,amplitude=6pt,raise=6pt,mirror}, decorate] (A1) --  (B1) node [midway,shift={(2pt,-16pt )},scale=0.8]{$P_s$};

\hyperpath{A}{C}{2}
\partellipsis{C}{D}{8}{6};
\hyperpath{B}{D}{1}
\draw [decoration={brace,amplitude=6pt,raise=6pt}, decorate] (A) --  (B) node [midway, shift={(1pt,18pt )},scale=0.8]{$P_t$};
\node at ($(A)+(150: 5mm)$){$v$};
\node at ($(A1)+(210: 5mm)$){$u$};
\node[shift={(-7mm,0pt)}] at ($(A)!0.5!(A1)$){$H$};

\end{tikzpicture}
    \caption{$H_{u,v}(P_s,P_t,G_r)$}
    \label{fig: enter-label}
\end{figure}
Lin and Zhou study the effect of some types of graft transformations to increase or decrease the distance spectral radius of connected uniform hypergraphs.\cite{lin2}
\begin{lem}\cite{lin2}\label{graft1}
    Let $H$ be a connected $k$-uniform hypergraph with $|E(H)| \geq 1$ and $u \in V(H).$ 
For integers $s\geq t\geq 1$, $\rho( H_u(s, t)) <\rho (H_u(s+1, t-1)).$ 
\end{lem}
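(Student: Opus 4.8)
The plan is to use the distance Perron vector of $G=H_u(s,t)$ as a test vector for $G'=H_u(s+1,t-1)$ and to bound the Rayleigh quotient. Let $\mathbf{x}=\mathbf{x}(G)$ be the distance Perron vector of $G$. Since $D(G')$ is real and symmetric, $\rho(G')=\max_{\mathbf{y}\ne\mathbf{0}}\mathbf{y}^{\top}D(G')\mathbf{y}/(\mathbf{y}^{\top}\mathbf{y})$, so it suffices to produce one vector on which the quotient exceeds $\rho(G)$. The two hypergraphs share a vertex set under the obvious correspondence: $G'$ is obtained from $G$ by detaching the last (pendant) edge of the short loose path $P_t$, together with its $k-1$ degree-one vertices, which I call the \emph{bag} $W$, and reattaching this edge at the end of the long path $P_s$. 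Identifying $\mathbf{x}$ with a vector on $V(G')$ through this correspondence and using that $\mathbf{x}$ is the Perron vector of $G$ gives $\rho(G')-\rho(G)\ge \mathbf{x}^{\top}\big(D(G')-D(G)\big)\mathbf{x}/(\mathbf{x}^{\top}\mathbf{x})$, and the whole proof reduces to showing the numerator is positive.

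First I would localize the change. Let $G_0=G\setminus W=H_u(s,t-1)$, whose two pendant-path ends are $w_s$ (end of the long path) and $z_{t-1}$ (end of the short path). Distances between vertices of $G_0$ are the same in $G$ and $G'$, and the $k-1$ vertices of $W$ are mutually at distance $1$ and equidistant to every vertex of $G_0$; hence $D(G')-D(G)$ is supported on the rows and columns indexed by $W$, and for $p\in W$, $y\in V(G_0)$ one has $d_{G'}(p,y)-d_{G}(p,y)=d_{G_0}(w_s,y)-d_{G_0}(z_{t-1},y)$. Consequently $\mathbf{x}^{\top}(D(G')-D(G))\mathbf{x}=2\,\sigma(W)\,S$, where $S:=\sum_{y\in V(G_0)}\big(d_{G_0}(w_s,y)-d_{G_0}(z_{t-1},y)\big)x_y$; since $\sigma(W)>0$, the lemma reduces to the single inequality $S>0$, i.e.\ to the statement that the end of the longer path has strictly larger $\mathbf{x}$-weighted transmission inside $G_0$ than the end of the shorter path.

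To read off the sign of $S$, partition $V(G_0)$ into the core $C=V(H)$, the long arm $A$, and the short arm $B$. For $y\in C$ the difference equals $s-(t-1)=s-t+1>0$, and for $y$ at depth $j$ on the short arm it equals $s-t+1+2j>0$; thus $C$ and $B$ contribute strictly positively, and the only negative contributions come from the outer part of the long arm, where at depth $i$ the difference is $s-t+1-2i$. The main obstacle is therefore to show that the positive budget from $C\cup B$ and from the inner long arm dominates these negative terms, which carry precisely the largest Perron entries, since for the distance matrix the components grow toward the periphery. I would control them in two complementary ways. Combining the eigenequations of $G$ at $w_s$ and $z_{t-1}$ yields the exact identity $S=\rho\,(x_{w_s}-x_{z_{t-1}})-(s+t-1)\,\sigma(W)$, reducing matters to a clean comparison of two Perron entries; and I would establish (or invoke) the monotonicity of the distance Perron vector along a pendant path, namely that the entries strictly increase toward the tip and that at equal depth a vertex on the shorter arm carries more weight than one on the longer arm, and feed this into an Abel-summation/pairing estimate $w_i\leftrightarrow z_i$ to bound $\sum_{y\in A}(\mathrm{depth})\,x_y$. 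Proving this monotonicity and the cross-arm comparison is the crux; once it is in hand, the routine bookkeeping for the $k-2$ pendant vertices on each loose-path edge goes through unchanged, and because $\mathbf{x}$ is not the Perron vector of $G'$, the strict inequality $S>0$ delivers the strict conclusion $\rho(H_u(s,t))<\rho(H_u(s+1,t-1))$.
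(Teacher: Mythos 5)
This lemma is quoted from \cite{lin2}; the paper gives no proof of its own, so the only comparison available is with the general machinery of Section~2 (Eqs.~\eqref{msgeq}, \eqref{msgeq1}, \eqref{apuueq}), which is exactly the framework you adopt. Your setup is correct: the perturbation is supported on the bag $W$, the quadratic-form difference equals $2\sigma(W)\,S$ with $S=W(G_0,w_s)-W(G_0,z_{t-1})$, the sign analysis of the coefficients on the core, the short arm and the long arm is right, and the identity $S=\rho\,(x_{w_s}-x_{z_{t-1}})-(s+t-1)\sigma(W)$ follows correctly from the eigenequations at $w_s$ and $z_{t-1}$ since $d(w_s,z_{t-1})=s+t-1$.

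The gap is that the entire difficulty of the lemma is concentrated in the step you defer: proving $S>0$, equivalently $\rho\,(x_{w_s}-x_{z_{t-1}})>(s+t-1)\,\sigma(W)$. The route you sketch for it does not visibly close. The pairing $w_i\leftrightarrow z_i$ together with the cross-arm comparison $x_{z_i}>x_{w_i}$ only neutralizes the long-arm depths $1\le i\le t-1$; what remains is
\[
(s-t+1)\,\sigma(C)\;+\;\sum_{i=t}^{s}(s-t+1-2i)\,a_i ,
\]
where $a_i$ is the weight at depth $i$ on the long arm. These are precisely the vertices carrying the \emph{largest} Perron entries with the \emph{most negative} coefficients (by your own monotonicity claim $a_i$ increases in $i$, so Chebyshev/Abel summation works \emph{against} you here), and when $H$ is small and $s\gg t$ the core contribution $(s-t+1)\sigma(C)$ is not obviously large enough to absorb the tail. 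Dominating this tail requires a genuine quantitative input --- typically the pendant-vertex relation \eqref{apuueq}, $(\rho+k)x_{u'}-\rho x_u=\sigma(G)$, applied along the arms to convert the needed lower bound on $x_{w_s}-x_{z_{t-1}}$ into one involving $\sigma(G)$ versus $\sigma(W)$ --- and neither this nor any substitute appears in your sketch. As written, ``establish (or invoke) the monotonicity \ldots and feed this into an Abel-summation/pairing estimate'' names the crux without proving it, so the argument is an outline of the standard strategy rather than a proof.
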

\begin{lem}\cite{lin2}\label{graft2}
     Let $H$ be a connected $k$-uniform hypergraph with $|E(H)| \geq 2$ and $u,v\in e \in E(H)$. Suppose that $H-e$ consists of $k$ components and $d_H(u) =d_H(v) =1 $. For integers $s\geq t\geq 1$, $\rho(H_{u,v}(s, t)) <\rho(H_{u,v}(s+1, t-1)).$ 
\end{lem}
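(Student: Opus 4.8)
The plan is to compare the two hypergraphs through the Rayleigh quotient evaluated at the distance Perron vector of the smaller one. Write $G=H_{u,v}(s,t)$ and $G'=H_{u,v}(s+1,t-1)$, and let $a_0=u,a_1,\dots,a_s$ and $b_0=v,b_1,\dots,b_t$ be the backbone vertices of the two pendant hyperpaths in $G$ (consecutive backbone vertices joined by a $k$-edge carrying $k-2$ further pendant vertices). Since $(s+1)+(t-1)=s+t$, the two hypergraphs have the same order, so I would fix a bijection $\phi:V(G)\to V(G')$ that is the identity on $H$ and on both paths except on the last edge of the $b$-path, which it carries onto the newly created last edge of the $a$-path (so $b_t\mapsto a_{s+1}$, and the $k-2$ companions of $b_t$ map to those of $a_{s+1}$). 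Let $M$ be the set of non-$b_{t-1}$ vertices of that last $b$-edge and $R=V(G)\setminus M$.

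Let $\mathbf{x}=\mathbf{x}(G)$ and $\rho=\rho(G)$. By the Rayleigh principle $\rho(G')\ge \mathbf{x}^{\top}D(G')\mathbf{x}$ while $\rho(G)=\mathbf{x}^{\top}D(G)\mathbf{x}$, so it suffices to show $\mathbf{x}^{\top}(D(G')-D(G))\mathbf{x}>0$. The only pairs whose distance changes are those with exactly one endpoint in $M$, and for $p\in R$, $m\in M$ one checks that $d_{G'}(\phi(p),\phi(m))-d_G(p,m)=d_G(p,a_s)-d_G(p,b_{t-1})=:D_p$ is independent of $m$. Hence
\[
\mathbf{x}^{\top}(D(G')-D(G))\mathbf{x}=2\,\sigma_G(M)\sum_{p\in R}D_p\,x_p .
\]
Using the eigen-equations $\sum_p d_G(p,a_s)x_p=\rho x_{a_s}$ and $\sum_p d_G(p,b_{t-1})x_p=\rho x_{b_{t-1}}$ together with $D_m=s+t$ for $m\in M$, the inner sum becomes $\rho(x_{a_s}-x_{b_{t-1}})-(s+t)\sigma_G(M)$. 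Thus the whole lemma reduces to the single scalar inequality
\[
\rho\,(x_{a_s}-x_{b_{t-1}})>(s+t)\,\sigma_G(M).
\]

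To make this tractable I would exploit the symmetries forced by the hypothesis. The $k-1$ degree-one vertices of the last $b$-edge are interchangeable, so $\sigma_G(M)=(k-1)x_{b_t}$; and because $u,v$ are degree-one vertices of the single edge $e$ whose removal splits $H$ into $k$ parts, the transposition $(u\,v)$ is an automorphism of $H$, giving the twin property $d_G(z,u)=d_G(z,v)$ for every $z\in V(H)\setminus\{u,v\}$. Taking eigen-equation differences across the final edge of each path and using these symmetries yields the clean identity $\rho x_{b_{t-1}}=(\rho+k)x_{b_t}-\sigma_G(V(G))$ (and an analogous relation walking inward along the longer path). Substituting these turns the target into a comparison between the Perron weight at the far end of the \emph{longer} path and at the end of the \emph{shorter} path.

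The main obstacle is precisely this comparison, which is the heart of the matter. When the edge is moved, the vertices near the far end of the $a$-path get \emph{closer} to $M$, so $D_p<0$ there, and exactly those vertices carry the largest Perron entries; consequently $D(G')\not\ge D(G)$ entrywise and no monotonicity shortcut is available. The positive contributions (from $H$ and the $b$-path, whose distances to $M$ all increase) must be shown to dominate these. I would control this through two facts about the distance Perron vector, each proved by chaining eigen-equation differences along a pendant hyperpath: (i) the entries strictly increase from the attachment vertex out to the pendant end; and (ii) at equal depth the \emph{shorter} pendant path carries the larger entry, i.e. $x_{a_i}\le x_{b_i}$ for $0\le i\le t$. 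Fact (ii) lets me pair level $i$ of the two paths for $1\le i\le t-1$ and use $D_{a_i}+D_{b_i}=2(s-t+1)>0$ to make each pair's weighted contribution nonnegative (and likewise for the companion pendant vertices); fact (i), together with the lower bound on $\rho x_{a_s}$ coming from the large distances $d_G(a_s,m)=s+t+1$ to the vertices of $M$, is what I would use to offset the leftover outer levels of the $a$-path against the contribution of $H$. Establishing (ii) with its (at first counterintuitive) direction, and verifying that the residual core contribution closes the inequality using $s\ge t$ and $|E(H)|\ge 2$, is where the real effort lies; strictness then follows from the entrywise positivity of $\mathbf{x}$.
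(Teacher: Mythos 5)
First, a point of reference: the paper does not prove this lemma at all --- it is quoted from Lin and Zhou \cite{lin2} --- so there is no internal proof to compare against. Judged on its own terms, your reduction is sound and is the standard opening move for such graft lemmas: the perturbation identity $\mathbf{x}^{\top}(D(G')-D(G))\mathbf{x}=2\,\sigma_G(M)\sum_{p\in R}D_p x_p$ is correct, the eigenequations do turn the inner sum into $\rho(x_{a_s}-x_{b_{t-1}})-(s+t)\sigma_G(M)$, and the observations that $\sigma_G(M)=(k-1)x_{b_t}$ and that $u,v$ are twins in $H$ (so $d_G(z,u)=d_G(z,v)$ for $z\in V(H)\setminus\{u,v\}$) are valid consequences of the hypotheses $d_H(u)=d_H(v)=1$ and ``$H-e$ has $k$ components.'' This is consistent with the machinery the paper itself sets up in Section~2 (Eq.~\eqref{msgeq}, Eq.~\eqref{apuueq}).

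The genuine gap is that the proof stops exactly where the work begins. The lemma has been reduced to the scalar inequality $\rho(x_{a_s}-x_{b_{t-1}})>(s+t)(k-1)x_{b_t}$, and everything offered toward it is a plan rather than an argument: fact (ii) ($x_{a_i}\le x_{b_i}$ at equal depth) is asserted with the promise that it follows ``by chaining eigen-equation differences,'' and you yourself flag that establishing it and ``verifying that the residual core contribution closes the inequality'' is ``where the real effort lies.'' Moreover, fact (i) is false as stated: the distance Perron entries along a pendant hyperpath do not in general increase monotonically from the attachment vertex outward. By Eq.~\eqref{udefeq12}, the sign of $x_{u_{i+1}}-x_{u_i}$ is that of $\sigma(\text{inner side})-\sigma(\text{outer side})$, which is negative near the root whenever the pendant path carries more Perron mass than the rest of the graph (already for an ordinary path viewed as a short $H$ with a long pendant tail, the entries first decrease and then increase toward the free end). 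So the monotonicity you invoke holds only outward from the balance point, and the step that ``offsets the leftover outer levels of the $a$-path against the contribution of $H$'' must be reworked around this. Until fact (ii), a corrected version of fact (i), and the final bookkeeping are actually carried out --- the bootstrapping apparatus of Lemma~\ref{nlem1} and Corollaries~\ref{ncor1} and \ref{ncor2} is the natural tool, seeded by the arm-swapping symmetry in the balanced case $s=t$ --- what you have is a correct framing of the problem, not a proof of the lemma.
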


For integer $m,a,b$, let $m'=m-a-b-2$, $H=P_{m'}^k$, $u,v$ be two pendant vertices of $H$ located in different pendant edges, then $C_k(m,a,b,\Delta)$ can be rewritten as $H_{u,v}(a+1,b+1,\Delta-2)$. And the alternative notation $G_{r_s}(s,t,c)$ can be used for $G_c(s,t)$.

In Sections 3 and 4, we investigated the effects of graft transformations in two specific cases on the distance spectral radius of connected uniform hypergraphs. Based on these results, we determine the structure of the hypertree in $\mathbb{T}_k(m,\Delta,n)$ with maximum distance spectral radius.

Consider a connected $k$-uniform hypergraph $G$ and $\mathbf{x}$ be a real column vector whose coordinates are indexed by $V(G)$. For $U\subseteq V(G)$, $v\in V(G)$, we define 
 $W(U,v, \mathbf{x})=\displaystyle\sum_{\substack{u\in U_1}}x_u d(u,v)$, where $x_u$ refers to the $\mathbf{x}$ coordinate corresponding to vertex $u$. If $H$ is a subgraph of $G$, the alternative notation $W(H,v,\mathbf{x})$ can be used for $W(V(H),v,\mathbf{x})$. If $\mathbf{x}$ is the distance Perron vector of $H$, the expressions $W(U,v, \mathbf{x})$, $W(V(H),v, \mathbf{x})$, and $W(H,v,\mathbf{x})$ can be simplified to $W(U,v)$, $W(V(H),v)$, and $W(H,v)$, respectively. If $\mathbf{x}$ is an all-ones vector, the expressions $W(U,v, \mathbf{x})$, $W(V(H),v, \mathbf{x})$, and $W(H,v,\mathbf{x})$ can be rewritten as $W_0(U,v)$, $W_0(V(H),v)$, and $W_0(H,v)$, respectively. And for $U_1,U_2\subseteq V(G)$, we define $W(U_1,U_2)=\displaystyle\sum_{\substack{u\in U_1;v \in U_2}}x_u d(u,v)$, where $x_u$ refers to the $\mathbf{x}(G)$ coordinate corresponding to vertex $u$.  For $U,V_1,V_2 \subseteq V(G)$, denote $W(U,V_1)-W(U,V_2)$ as $WD(U;V_1,V_2)$.


If $G=G_{v_1} (w) G_{v_2}$, then $w$ is a cut vertex of $G$.  We have that: \\ $W(G_{v_1},u)=W(G_{v_1},w)+d(w,u)\sigma(G_{v_1})$ for any vertex of $G_{v_2}$.

Then$$\mathbf{x}^TD(G)\mathbf{x}=\sum_{\{u,v\}\subseteq V(G)}2d_G(u,v)x_ux_v=\sum_{u\in V(G)}x_u W(G,u,\mathbf{x}),$$
and $\rho$ is a distance eigenvalue with corresponding eigenvector $\mathbf{x}$ if and only if $\mathbf{x}\ne 0$ and for each $u\in V(G)$,
\begin{align}\label{eigenequ}
\rho x_u=\sum_{v\in V(G)}d_G(u,v)x_v=W(G,u,\mathbf{x}).
\end{align}
This equation is known as the eigenequation for $\rho(G)$ at $u$. By Rayleigh's principle, for a unit column vector $\mathbf{x}\in \mathbb{R}^n$, we have $\rho(G)\ge \mathbf{x}^TD(G)\mathbf{x}$, with equality if and only if $\mathbf{x}=\mathbf{x}(G)$.

We say that two vertices $u,v \in V$ in a hypergraph $G=(V,E)$ are similar if there is a graph automorphism $\phi \in Aut(G)$, s.t. $\phi(u)=v$.

\begin{lem}\label{lem1}\cite{lin2}
	Let $u,v$ be similar vertices in hypergraph $G$ and $\mathbf{x}=\mathbf{x}(G)$. Then  $x_{u}=x_{v}$.
\end{lem}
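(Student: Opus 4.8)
The plan is to exploit the fact that a hypergraph automorphism preserves distances, so that the permutation it induces on $V(G)$ commutes with the distance matrix $D(G)$, and then to invoke the simplicity of the Perron eigenvalue $\rho(G)$ together with the uniqueness of its positive unit eigenvector.

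First I would record that the automorphism $\phi$ realizing the similarity, with $\phi(u)=v$, is distance preserving: $d_G(\phi(a),\phi(b))=d_G(a,b)$ for all $a,b\in V(G)$. This is immediate from the definition of distance, since $\phi$ sends any walk (and hence any shortest path) between two vertices to a walk of the same length between their images, and conversely via $\phi^{-1}$. Let $P$ be the permutation matrix of $\phi$, defined by $P_{ab}=1$ iff $a=\phi(b)$. The distance-preserving property translates exactly into $(P^{\mathsf T}D(G)P)_{ab}=d_G(\phi(a),\phi(b))=d_G(a,b)=D(G)_{ab}$, that is, $P^{\mathsf T}D(G)P=D(G)$. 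Since $P$ is orthogonal ($P^{\mathsf T}=P^{-1}$), this is equivalent to the commutation relation $D(G)P=PD(G)$.

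Next, starting from $D(G)\mathbf{x}=\rho(G)\mathbf{x}$, I would set $\mathbf{y}=P\mathbf{x}$ and compute $D(G)\mathbf{y}=D(G)P\mathbf{x}=PD(G)\mathbf{x}=\rho(G)P\mathbf{x}=\rho(G)\mathbf{y}$, so $\mathbf{y}$ is again an eigenvector of $D(G)$ for the eigenvalue $\rho(G)$. As a coordinate permutation of the positive unit vector $\mathbf{x}$, the vector $\mathbf{y}$ is itself positive and of unit norm. By the Perron–Frobenius theorem, recalled in the introduction, $\rho(G)$ is simple and admits a unique positive unit eigenvector, namely $\mathbf{x}(G)=\mathbf{x}$; hence $\mathbf{y}=\mathbf{x}$, i.e.\ $P\mathbf{x}=\mathbf{x}$. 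Reading off coordinates gives $(P\mathbf{x})_a=x_{\phi^{-1}(a)}$, so $x_{\phi^{-1}(a)}=x_a$ for every $a$, equivalently $x_{\phi(a)}=x_a$. Taking $a=u$ yields $x_v=x_{\phi(u)}=x_u$, as desired.

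If one prefers to avoid permutation-matrix bookkeeping, the same argument can be run directly from the eigenequation~\eqref{eigenequ}: define $\mathbf{x}^{\phi}$ by $x^{\phi}_w=x_{\phi(w)}$ and verify $W(G,w,\mathbf{x}^{\phi})=\rho(G)x^{\phi}_w$ for each $w$, using only distance-preservation and reindexing the defining sum by $\phi$; this again exhibits $\mathbf{x}^{\phi}$ as a positive unit eigenvector for $\rho(G)$, forcing $\mathbf{x}^{\phi}=\mathbf{x}$ and hence $x_v=x_u$. I do not anticipate a substantive obstacle here; the only point requiring care is the bookkeeping that converts distance-preservation into the commutation $D(G)P=PD(G)$ (equivalently, the correct reindexing in the eigenequation), after which the uniqueness clause of Perron–Frobenius does all of the work.
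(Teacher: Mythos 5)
Your argument is correct and is the standard one: the automorphism induces a permutation matrix commuting with $D(G)$, so $P\mathbf{x}$ is again a positive unit eigenvector for the simple eigenvalue $\rho(G)$, forcing $P\mathbf{x}=\mathbf{x}$ and hence $x_u=x_v$. The paper itself gives no proof (it cites the lemma from the reference of Lin and Zhou), and your route is exactly the expected one, so there is nothing to add.
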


Let $u'$ be a pendant neighbor of $u$ in $k$-unifrom hypergraph $G$ with $u,u' \in e$. Define $G'=G-U$ where $U=e-\{u\}$. Consider the eigenequations for $\rho(G)$ at $u$ and $u'$.
\begin{align}
\rho(G) x_u=&W(G',u)+(k-1)x_{u'}\label{eigequ}\\
\rho(G) x_{u'}=&W(G',u')+(k-2)x_{u'}=W(G',u)+\sigma(G')+(k-2)x_{u'}\notag\\
=&W(G',u)+\sigma(G)-x_{u'}.\label{eigequ1}
\end{align}
From (\ref{eigequ}) and (\ref{eigequ1}) and Lemma \ref{lem1} we have: 
\begin{equation}\label{apuueq}
	(\rho(G)+k)x_{u'}-\rho(G) x_u=\sigma(G)
\end{equation}

Let $u$ be a vertex of degree one in $e\in G$. For any $v \in e$, we have $d_G(u,w)\geq d_G(v,w)$ for $w \in V(G)\backslash \{u\}$. 
So $(\rho(G)+1)(x_u-x_v)\geq 0$ and $x_u\geq x_v$, where the equality holds if only if $d_G(v)=1$.

Let $H$ be a connected uniform hypergraph obtained from the hypergraph $G$ by identifying two $u_1$ and
$u_2$ of $G$ with the center vertex of $k$-uniform hyperstars $S_1$ and $S_2$ with size $l\geq 0$, respectively.\\
Let $u'_1,u'_2$ be the pendant neighbor of $u_1,u_2$, respectively.
Then $\sigma(S_1)-\sigma(S_2)= x_{u_1}-x_{u_2}+l(k-1)(x_{u'_1}-x_{u'_2})$.
 From Eq. \eqref{apuueq}, we obtain: 
 \begin{align*}
 \sigma(S_1)-\sigma(S_2)=& \bigg(\frac{\rho(T)l(k-1)}{\rho(T)+k}+1\bigg )(x_{u_1}-x_{u_2}).
\end{align*}
Therefore,  
\begin{equation}\label{Ssgneq}
\sgn(\sigma(S_1)-\sigma(S_2))=\sgn(x_{u_1}-x_{u_2}).
\end{equation}

Let $G$ be a $k$-uniform hypergraph with $u, v \in V (G)$ and $e_1, \cdots, e_r \in E(G)$ such that $u \notin e_i$ and $v \in e_i$ for $1 \le i \le r$. Let $e_i'
 = (e_i \setminus \{v\})\cup \{u\}$ for $1 \le i\le r$. Let $G'$ be the hypergraph with $V (G') = V (G)$ and $E(G') = (E(G) \setminus \{e_1, \cdots, e_r\})\cup\{e'_1,\cdots, e'_r\}.$ Then we say that $G'$ is obtained from $G$ by moving edges $e_1, ..., e_r$ from $v$ to $u$.




Let $u_1,u_2 \in V(H_0)$ and $r$ be the rooted vertex of $G$. Take $H_1=H_0(u_1,G)$, $H_2=H_0(u_2,G)$. Then 
\begin{align}
\frac{1}{2}\mathbf{x}^T(D(H_2)-D(H_1))\mathbf{x}=&\sigma(V'(G),\mathbf{x})(W(H_0,u_2;\mathbf{x})-W(H_0,u_1;\mathbf{x}))\label{msgeq}
\end{align}
Take $\mathbf{x}=\mathbf{x}(H_1)$. Then 
\begin{align}
\rho(H_1)(x_{u_2}-x_{u_1})=&d(u_1,u_2)\sigma(V'(G),\mathbf{x})+(W(H_0,u_2;\mathbf{x})-W(H_0,u_1;\mathbf{x}))\label{msgeq1}
\end{align}

So $$\rho(H_2)-\rho(H_1) \geq \sigma(V'(G),\mathbf{x})(\rho(H_1)(x_{u_2}-x_{u_1}) -d(u_1,u_2)\sigma(V'(G),\mathbf{x})).$$

Let $u_1,u_2 \in V(H_0)$ and $r_1,r_2$ be the rooted vertex of $G_{r_1},\ G_{r_2}$, respectively. Take $H_1=H_0(u_1,u_2,G_{r_1},G_{r_2})$, $H_2=H_0(u_2,u_1,G_{r_2},G_{r_2})$. Then by using Eq. \eqref{msgeq} twice, we have: 
\begin{align}\label{msgeq2}
    \frac{1}{2}\mathbf{x}^T(D(H_2)-D(H_1))\mathbf{x}=&(\sigma(V'(G_{u_1}))-\sigma(V'(G_{u_2})))(W(H_0,u_2;\mathbf{x})-W(H_0,u_1;\mathbf{x}))
\end{align}
\begin{lem}\label{alem}
Suppose that $u_1,u_2$ are two distinct vertices of $H$, and $r_1,r_2$ are rooted vertices of nontrivial rooted graphs $G_{r_1}$ and $G_{r_2}$, respectively. 
Take $H_1=H((u_1,u_1),\mathcal{G}), H_2=H((u_1,u_2),\mathcal{G})$.
Then $\rho(H_2)>\rho(H_1)$ holds if one of the following two conditions is satisfied: 
\begin{enumerate}[(1).]
    \item $d_H(u_1)=d_H(u_2)=d_H(u_1,u_2)=1$;
    \item $\sigma_{H_1}(V'(G_{r_1}))+x_{u_1}\geq \sigma_{H_1}(H)$
\end{enumerate}
\end{lem}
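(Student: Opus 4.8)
The plan is to realize $H_2$ as the result of detaching the rooted graph $G_{r_2}$ from $u_1$ and reattaching it at $u_2$, while $G_{r_1}$ stays fixed at $u_1$. Writing $\tilde H_0=H(u_1,G_{r_1})$ for the common base and $G=G_{r_2}$ for the moved graph, this is exactly the ``moving a rooted graph'' situation recorded in \eqref{msgeq}--\eqref{msgeq1}. Taking $\mathbf{x}=\mathbf{x}(H_1)$ and combining Rayleigh's principle with the displayed consequence of \eqref{msgeq1} gives
\[
\rho(H_2)-\rho(H_1)\;\geq\;\sigma(V'(G_{r_2}),\mathbf{x})\Big(\rho(H_1)(x_{u_2}-x_{u_1})-d(u_1,u_2)\,\sigma(V'(G_{r_2}),\mathbf{x})\Big).
\]
Since $G_{r_2}$ is nontrivial, $\sigma(V'(G_{r_2}),\mathbf{x})>0$, so everything reduces to proving that the bracketed factor $B:=\rho(H_1)(x_{u_2}-x_{u_1})-d(u_1,u_2)\sigma(V'(G_{r_2}))$ is strictly positive under either hypothesis.

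The engine for both cases is the eigenequation difference $\rho(H_1)(x_{u_2}-x_{u_1})=\sum_{v\in V(H_1)}\big(d(u_2,v)-d(u_1,v)\big)x_v$, which I would evaluate by grouping vertices. Every vertex of $V'(G_{r_1})\cup V'(G_{r_2})$ is joined to the base only through $u_1$, so its shortest path from $u_2$ passes through $u_1$ and contributes $d(u_2,v)-d(u_1,v)=d(u_1,u_2)$; the vertices $u_1$ and $u_2$ contribute $+d(u_1,u_2)x_{u_1}$ and $-d(u_1,u_2)x_{u_2}$; and for $v\in V(H)\setminus\{u_1,u_2\}$ the triangle inequality yields $d(u_2,v)-d(u_1,v)\geq -d(u_1,u_2)$. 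Collecting terms gives the general lower bound
\[
\rho(H_1)(x_{u_2}-x_{u_1})\;\geq\;d(u_1,u_2)\big(\sigma(V'(G_{r_1}))+\sigma(V'(G_{r_2}))+2x_{u_1}-\sigma_{H_1}(H)\big).
\]

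For part (2) this bound finishes the argument at once: subtracting $d(u_1,u_2)\sigma(V'(G_{r_2}))$ leaves $B\geq d(u_1,u_2)\big(\sigma(V'(G_{r_1}))+2x_{u_1}-\sigma_{H_1}(H)\big)$, and the hypothesis $\sigma_{H_1}(V'(G_{r_1}))+x_{u_1}\geq\sigma_{H_1}(H)$ together with $x_{u_1}>0$ forces this to be at least $d(u_1,u_2)x_{u_1}>0$. For part (1) the crude bound is too lossy, so I would instead exploit the symmetry the hypothesis provides. As $d_H(u_1)=d_H(u_2)=1$ and $d_H(u_1,u_2)=1$, the vertices $u_1,u_2$ are two pendant vertices of a common edge $e$ of $H$; in $H_1$ both reach $V(H)\setminus\{u_1,u_2\}$ only through the interior vertices of $e$, so $d(u_1,v)=d(u_2,v)$ for every such $v$ and the middle sum vanishes \emph{exactly}. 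This upgrades the computation to the identity $(\rho(H_1)+1)(x_{u_2}-x_{u_1})=\sigma(V'(G_{r_1}))+\sigma(V'(G_{r_2}))$, whence $B=x_{u_1}-x_{u_2}+\sigma(V'(G_{r_1}))$ and positivity is equivalent to $\rho(H_1)\sigma(V'(G_{r_1}))>\sigma(V'(G_{r_2}))$. I would close this by fixing any $v_0\in V'(G_{r_1})$ and reading its eigenequation $\rho(H_1)x_{v_0}=\sum_w d(v_0,w)x_w$: each $w\in V'(G_{r_2})$ lies in a different branch at distance at least $2$ from $v_0$, so $\rho(H_1)x_{v_0}>\sigma(V'(G_{r_2}))$, and since $x_{v_0}\leq\sigma(V'(G_{r_1}))$ the claim follows.

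The main obstacle is the middle sum $\sum_{v\in V(H)\setminus\{u_1,u_2\}}(d(u_2,v)-d(u_1,v))x_v$, whose individual terms have no controllable sign in general; this is precisely why the two hypotheses split into a ``heaviness'' condition (2), absorbed by the triangle estimate, and a ``symmetry'' condition (1), where the sum collapses to zero. The delicate part is verifying the structural distance claims—that attached vertices all shift by the full $d(u_1,u_2)$, and that in case (1) the two pendant vertices are genuinely equidistant from the rest of $H$ inside $H_1$ even though $G_{r_1},G_{r_2}$ destroy the global $u_1\!\leftrightarrow\!u_2$ automorphism.
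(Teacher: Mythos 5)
Your proposal is correct and follows the same skeleton as the paper's proof: reduce to the sign of $W(G_0,u_2)-W(G_0,u_1)$ (your $B$) via \eqref{msgeq}--\eqref{msgeq1} and Rayleigh's principle, then handle (2) by the triangle-inequality absorption $d(u_2,v)-d(u_1,v)\ge -d(u_1,u_2)$, which is exactly the paper's estimate $W(H\setminus\{u_1\},u_2)-W(H\setminus\{u_1\},u_1)\ge -d(u_1,u_2)\sigma_{H_1}(H\setminus\{u_1\})$. The one place you genuinely diverge is case (1): the paper simply asserts $W(H,u_1)-W(H,u_2)=0$ and concludes, whereas with the self-terms of $u_1,u_2$ included this difference equals $d(u_1,u_2)(x_{u_1}-x_{u_2})$, a nonzero (indeed negative) residual; you track it, arrive at $B=\sigma(V'(G_{r_1}))-(x_{u_2}-x_{u_1})$, and close the gap with the auxiliary eigenequation estimate $\rho(H_1)\,\sigma(V'(G_{r_1}))>\sigma(V'(G_{r_2}))$ obtained from a vertex $v_0\in V'(G_{r_1})$ at distance at least $2$ from every vertex of $V'(G_{r_2})$. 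This costs you an extra half page but buys a treatment of case (1) that is airtight where the paper's is slightly glossed; both routes rest on the same structural facts (the attached graphs shift by the full $d(u_1,u_2)$, and pendant vertices of a common edge are equidistant from the rest of $H$), which you verify correctly.
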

\begin{proof}
Take $G_0=H(u_1,G_{r_1})$, then $H_1=G_0(u_1,G_{r_2})$ and $H_2=G_0(u_2,G_{r_2})$. Let $\mathbf{x}=\mathbf{x}(H_1)$.  By Eq. \eqref{msgeq}, we have
$$
\begin{aligned}
\frac{1}{2}(\rho(H_2)-\rho(H_1))\geq &\frac{1}{2}\mathbf{x}^T(D(H_2)-D(H_1))\mathbf{x}\\
=&\sigma(V'(G_{r_2}))(W(G_0,u_2)-W(G_0,u_1)) \\
=& \sigma(V'(G_{r_2}))(d_H(u_1,u_2)\sigma(V'(G_{r_1}))+W(H,u_2)-W(H,u_1))
\end{aligned}
$$
(1). If $d_H(u_1)=d_H(u_2)=d_H(u_1,u_2)=1$, then $W(H,u_1)-W(H,u_2)=0$  and $\frac{1}{2}(\rho(H_2)-\rho(H_1))>0$ follow.\\
(2). When $\sigma_{H_1}(V'(G_r))+x_u\geq \sigma_{H_1}(H)$, we have: \begin{align*}
    W(H,u_2)-W(H,u_1)&= d(u_1,u_2)x_{u_1}+W(H\setminus\{u_1\},u_2)-W(H\setminus\{u_1\},u_1)\\
    &\geq d(u_1,u_2)x_{u_1}-d(u_1,u_2)\sigma_{H_1}(H\setminus\{u_1\})>d(u_1,u_2)x_{u_1}-d(u_1,u_2)\sigma_{H_1}(H),
\end{align*}  then $\frac{1}{2}(\rho(H_2)-\rho(H_1))>0$.

\end{proof}

For $H=P_m^k(U,\mathcal{G})$ where $P_m^k=(u_0,e_1,u_1,\cdots,u_{m-1},e_m,u_m)$, $U=(u_1,\cdots,u_{m-1})$, $\mathcal{G}=(G_{r_1},\cdots,G_{r_{m-1}})$. Taking $\mathbf{x}=\mathbf{x}(H)$, by Lemma \ref{lem1}, we can use $x_{w_i}$ to donate the component of $\mathbf{x}$ that corresponds to the vertices in $e_i$ of degree $1$.

For $1 \leq i  \leq m^*-1$, we have: 
\begin{align}
	\rho(H)x_{u_i}
	  =&W(H^{u_{i-1}},u_i)+W(H_{u_{i}},u_i) +(k-2)x_{w_i}\notag\\
 \rho(H)x_{w_i}= &	W(H^{u_{i-1}},w_i)+W(H_{u_{i}},w_i)+(k-3)x_{w_{i}}\notag\\
 = &	W(H^{u_{i-1}},u_i)+W(H_{u_{i}},u_i)+\sigma(H_{u_{i}})+(k-3)x_{w_{i}}\notag\\
 =& \rho(H)x_{u_i}+\sigma(H_{u_{i}})-x_{w_{i}}
 = \rho(H)x_{u_{i-1}}+\sigma(H^{u_{i-1}})-x_{w_{i}}\label{xuw4}
\end{align}

From Eq. (\ref{xuw4}), we have: 
\begin{align}
	\rho(H)(x_{u_{i-1}}-x_{u_{i}})=&\sigma(H_{u_{i}})-\sigma(H^{u_{i-1}})\label{udefeq12}\\
	(\rho(H)+1)(x_{w_i}-x_{w_{i+1}})=& 	\sigma(H_{u_{i}}) - \sigma(H^{u_{i}})\label{wTeq2}\\
	\rho(H)(x_{u_i}+x_{u_{i-1}})=&(2\rho(H)+k)x_{w_i}-\sigma(H)\label{wTeq3}
\end{align}

From Eq. \eqref{udefeq12} and  Eq. \eqref{wTeq3}, we have: 

\begin{align}
	\rho(H)(x_{u_{i-1}}-x_{u_{i+1}})=&(2\rho(H)+k)(x_{w_{i}}-x_{w_{i+1}})\notag\\
	=& \frac{2\rho(H)+k}{\rho(H)+k-1}(\sigma(H_{u_{i+1}}) - \sigma(H^{u_{i-1}}))\label{xuiTeq}.
\end{align}
Take integers $s,t$ such that $s+t=m_0, 1\leq s-t \leq 2$.
	From \eqref{udefeq12}, we have: 
		\begin{align}
	&\rho(H)((x_{u_{t-i}}-x_{u_{s+i}})-(x_{u_{t+1-i}}-x_{u_{s+i-1}}))=\sigma(H_{u_{t+1-i}})-\sigma(H^{u_{t-i}})+\sigma(H_{u_{s+i}})-\sigma(H^{u_{s+i-1}})\notag\\
	=&2(\sigma(H_{u_{s+i}})-\sigma(H^{u_{t-i}}))+(k-2)(x_{w_{s+i}}-x_{w_{t+1-i}})\label{udeffeq1}\\
	=&2(\sigma(H_{u_s})-\sigma(H^{u_t}))+2(\sigma(H_{u_{s+i}})-\sigma(H_{u_s})+\sigma(H^{u_t})-\sigma(H^{u_{t-i}}))+(k-2)(x_{w_{s+i}}-x_{w_{t+1-i}})\notag\\
	=& \adjustbox{width=0.925\textwidth}{$2(\sigma(H_{u_s})-\sigma(H^{u_t}))+2\sum_{l=0}^{i-1}(\sigma(G_{u_{t-l}})-\sigma(G_{u_{s+l}})+(k-2)(x_{w_{t-l}}-x_{w_{s+1+l}}))+(k-2)(x_{w_{s+i}}-x_{w_{t+1-i}}).$}\label{udeffeq}
	\end{align}
	
	According to \eqref{wTeq2}, we have: 
	\begin{align}
		&(\rho(H)+1)\big((x_{w_{t-i+1}}-x_{w_{s+i}})-(x_{w_{t+2-i}}-x_{w_{s+i-1}})\big)\notag\\
  =&2(\sigma(H_{u_{s+i-1}})-\sigma(H^{u_{t-i+1}}))+\sigma(G_{t-i+1})- \sigma(G_{s+i-1})\label{wdeffeq2}\\
	=&2(\sigma(H_{u_s})-\sigma(H^{u_t}))+2(\sigma(H_{u_{s+i-1}})-\sigma(H_{u_s})+\sigma(H^{u_t})-\sigma(H^{u_{t-i+1}}))+\sigma(G_{u_{t-i+1}})- \sigma(G_{u_{s+i-1}})\notag\\
	=&2(\sigma(H_{u_s})-\sigma(H^{u_t}))+2\sum_{l=0}^{i-2}(\sigma(G_{u_{t-l}})-\sigma(G_{u_{s+l}})+(k-2)(x_{w_{t-l}}-x_{w_{s+1+l}}))+\sigma(G_{u_{t-i+1}})- \sigma(G_{u_{s+i-1}}).\label{wdeffeq}
	\end{align}

\begin{lem}\label{nlem1} Suppose $1\leq s-t \leq 2$ and $\sgn(\sigma(G_{u_{t-l}})-\sigma(G_{u_{s+l}}))=\sgn(x_{u_{t-l}}-x_{u_{s+l}})$ for $0\leq l \leq r$. Then \\
(1). $x_{u_{t-l}}-x_{u_{s+l}}$ and $ \sigma(H_{u_s})-\sigma(H^{u_t})$ have common sign for $0\leq l \leq r+1$. \\
(2). $x_{w_{t+1}}-x_{w_{s}}$ and $\sigma(H_{u_s})-\sigma(H^{u_t})$ have common sign or $ x_{w_{t+1}}-x_{w_{s}}=0$. $x_{w_{t+1-l}}-x_{w_{s+l}}$ and $\sigma(H_{u_s})-\sigma(H^{u_t})$ have common sign for $1\leq l \leq r+1$.
\end{lem}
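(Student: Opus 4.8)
The plan is to run a single induction on the index, proving parts (1) and (2) together in an interleaved fashion. Write $A := \sigma(H_{u_s})-\sigma(H^{u_t})$, $d_l := x_{u_{t-l}}-x_{u_{s+l}}$ and $f_l := x_{w_{t+1-l}}-x_{w_{s+l}}$; the two conclusions then read ``$d_l$ and $A$ share a sign for $0\le l\le r+1$'' and ``$f_l$ and $A$ share a sign for $1\le l\le r+1$, while $f_0$ is either $0$ or shares the sign of $A$.'' Since replacing $A$ by $-A$ reverses every inequality, and since $A=0$ forces every $d_l$ and $f_l$ to vanish through the recursions below, I may assume $A>0$ and prove $d_l,f_l>0$ in the stated ranges. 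The bookkeeping that makes everything fit together is the reindexing $x_{w_{t-l}}-x_{w_{s+1+l}}=f_{l+1}$ and $x_{w_{s+i}}-x_{w_{t+1-i}}=-f_i$, which turns the $w$-terms appearing in \eqref{udeffeq} and \eqref{wdeffeq} into partial sums of the sequence $(f_j)$.

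First I would settle the base cases, splitting on $s-t\in\{1,2\}$. When $s-t=1$, equation \eqref{udefeq12} at index $s$ gives $\rho(H)d_0=A$, while $f_0=x_{w_{t+1}}-x_{w_s}=0$ identically. When $s-t=2$, the chain of equalities in \eqref{xuiTeq} at $i=t+1$ yields simultaneously $\rho(H)d_0=(2\rho(H)+k)f_0$ and $\rho(H)d_0=\frac{2\rho(H)+k}{\rho(H)+k-1}A$; hence $d_0$ and $f_0$ are both positive multiples of $A$ and therefore positive. In either case $d_0>0$ and $f_0\ge 0$, which anchors the induction (note that $f_0$ never actually enters the recursions below, so its vanishing when $s-t=1$ is harmless).

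For the inductive step, fix $1\le i\le r+1$ and assume $d_0,\dots,d_{i-1}>0$ together with $f_0\ge0$ and $f_1,\dots,f_{i-1}>0$. I would first produce $f_i$ from \eqref{wdeffeq}: after the reindexing its right-hand side equals $2A+2\sum_{l=0}^{i-2}\bigl(\sigma(G_{u_{t-l}})-\sigma(G_{u_{s+l}})\bigr)+2(k-2)\sum_{j=1}^{i-1}f_j+\bigl(\sigma(G_{u_{t-i+1}})-\sigma(G_{u_{s+i-1}})\bigr)$. By the hypothesis of the lemma each $\sigma$-difference has the sign of the corresponding $d_l$ (all positive for $l\le i-1\le r$), and the $f_j$ are nonnegative by induction, so the right-hand side is at least $2A>0$; thus $f_i-f_{i-1}>0$ and $f_i>0$. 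I would then feed $f_i$ into \eqref{udeffeq}: after the same reindexing its right-hand side equals $2A+2\sum_{l=0}^{i-1}\bigl(\sigma(G_{u_{t-l}})-\sigma(G_{u_{s+l}})\bigr)+(k-2)\bigl(2f_1+\cdots+2f_{i-1}+f_i\bigr)$, again bounded below by $2A>0$, whence $d_i-d_{i-1}>0$ and $d_i>0$. Invoking the hypothesis for $l\le i-1$ is legitimate precisely because $i\le r+1$, which is why the conclusions extend exactly to $l=r+1$.

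The step I expect to require the most care is the cancellation inside \eqref{udeffeq}: its explicit last term $(k-2)(x_{w_{s+i}}-x_{w_{t+1-i}})=-(k-2)f_i$ carries the wrong sign on its own, and only after absorbing it into the doubled sum $2(k-2)\sum_{j=1}^{i}f_j$ does one obtain the manifestly nonnegative combination $(k-2)(2f_1+\cdots+2f_{i-1}+f_i)$. This is also the reason the induction must be interleaved, with $f_i$ established \emph{before} $d_i$ rather than the two sequences treated independently: \eqref{udeffeq} genuinely needs the sign of $f_i$, whereas \eqref{wdeffeq} needs only $f_1,\dots,f_{i-1}$ and (through the hypothesis) $d_{i-1}$, so the dependency chain $d_{i-1}\Rightarrow f_i\Rightarrow d_i$ closes consistently. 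Throughout I would assume the indices $t-l$ and $s+l$ remain within the path for $0\le l\le r+1$, and I would record that the alternative $f_0=0$ in the $s-t=1$ case is exactly the escape clause stated in part (2).
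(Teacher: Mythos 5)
Your proof is correct and follows essentially the same route as the paper's: the base case from Eq.~\eqref{udefeq12} and \eqref{xuiTeq}, then an interleaved induction driven by Eq.~\eqref{wdeffeq} and \eqref{udeffeq}. You supply more detail than the paper does (in particular the absorption of the $-(k-2)f_i$ term into the doubled sum and the $f_i$-before-$d_i$ ordering), but the argument is the same one the authors intend.
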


\begin{proof}
From Eq. \eqref{udefeq12} and \eqref{xuiTeq}, we get (1) is true for $l=0$ and $x_{w_{t+1}}-x_{w_{s}}$ and $\sigma(H_{u_s})-\sigma(H^{u_t})$ have common sign or $ x_{w_{t+1}}-x_{w_{s}}=0$.

Suppose that for $0\leq l\leq m-1\leq r$, $x_{u_{t-l}}-x_{u_{s+l}}$ and $\sigma(H_{u_s})-\sigma(H^{u_t})$ have common sign, $x_{w_{t+1}}-x_{w_{s}}$ and $\sigma(H_{u_s})-\sigma(H^{u_t})$ have common sign or $ x_{w_{t+1}}-x_{w_{s}}=0$, and for $1\leq l\leq m-1\leq  r$, $x_{w_{t+1-l}}-x_{w_{s+l}}$ and $\sigma(H_{u_s})-\sigma(H^{u_t})$ have common sign , 
from Eq. \eqref{udeffeq} and \eqref{wdeffeq}, we have $x_{u_{t-m}}-x_{u_{s+m}}$ , $x_{w_{t+1-m}}-x_{w_{s+m}}$ and $\sigma(H_{u_s})-\sigma(H^{u_t})$ have common sign.
By induction hypothesis, we complete the proof.
\end{proof}
From Eq. \eqref{udeffeq} and \eqref{wdeffeq} and Lemma \ref{nlem1}, we have: 
\begin{cor}\label{ncor1}
Suppose $1\leq s-t \leq 2$, $\sigma(H_{u_s})-\sigma(H^{u_t})> 0$ and   $\sgn(\sigma(G_{u_{t-l}})-\sigma(G_{u_{s+l}}))=\sgn(x_{u_{t-l}}-x_{u_{s+l}})$ for $0\leq l \leq r$. Then we have : \\
$x_{u_{t-l}}-x_{u_{s+l}}> x_{u_{t-l+1}}-x_{u_{s+l-1}}> 0$ and 
$x_{w_{t-l+1}}-x_{w_{s+l}}> x_{w_{t-l+2}}-x_{w_{s+l-1}}\ge 0$ for $0\leq l \leq r+1$. 
\end{cor}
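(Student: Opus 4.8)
The plan is to read the corollary as two monotonicity statements about the ``gap sequences'' $A_l := x_{u_{t-l}} - x_{u_{s+l}}$ and $B_l := x_{w_{t-l+1}} - x_{w_{s+l}}$, since the middle expressions in the two displayed chains are exactly $x_{u_{t-l+1}} - x_{u_{s+l-1}} = A_{l-1}$ and $x_{w_{t-l+2}} - x_{w_{s+l-1}} = B_{l-1}$; thus the chains are literally $A_l > A_{l-1} > 0$ and $B_l > B_{l-1} \ge 0$. Writing also $C_l := \sigma(G_{u_{t-l}}) - \sigma(G_{u_{s+l}})$, I would first harvest the purely qualitative content. Under the hypothesis $\sigma(H_{u_s}) - \sigma(H^{u_t}) > 0$, Lemma \ref{nlem1}(1) forces $A_l > 0$ for $0 \le l \le r+1$, and Lemma \ref{nlem1}(2) gives $B_l \ge 0$ in the same range (with the genuine possibility $B_0 = 0$). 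Feeding $A_l > 0$ into the sign hypothesis $\sgn C_l = \sgn A_l$ then yields $C_l > 0$ for $0 \le l \le r$. These three facts already dispose of the rightmost pieces ``$>0$'' and ``$\ge 0$'' of the two chains, so everything reduces to the strict middle inequalities $A_l - A_{l-1} > 0$ and $B_l - B_{l-1} > 0$ for $l \ge 1$.

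For these I would simply specialize the master identities \eqref{udeffeq} and \eqref{wdeffeq} to $i = l$. The $w$-case is the easy one: at $i=l$, identity \eqref{wdeffeq} expresses $(\rho(H)+1)(B_l - B_{l-1})$ as the strictly positive $2(\sigma(H_{u_s})-\sigma(H^{u_t}))$, plus the summands $C_j$ and $(k-2)B_{j+1}$ (here I use the index translation $x_{w_{t-j}} - x_{w_{s+1+j}} = B_{j+1} \ge 0$), plus a final term $C_{l-1} > 0$. Every term is nonnegative and the leading one is positive, so $B_l > B_{l-1}$ follows immediately.

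The main obstacle is the $u$-case, because identity \eqref{udeffeq} at $i=l$ carries one term of the wrong sign: its trailing summand is $(k-2)(x_{w_{s+l}} - x_{w_{t+1-l}}) = -(k-2)B_l \le 0$. The resolution I would use is that this negative term is absorbed by the summation. Indeed the sum contributes $2\sum_{j=0}^{l-1}(k-2)B_{j+1} = 2(k-2)(B_1 + \cdots + B_l)$, so the net $w$-contribution is $2(k-2)(B_1+\cdots+B_l) - (k-2)B_l = (k-2)(2B_1 + \cdots + 2B_{l-1} + B_l) \ge 0$, using only $B_j \ge 0$. Combined with the strictly positive leading $2(\sigma(H_{u_s})-\sigma(H^{u_t}))$ and the term $2\sum_{j=0}^{l-1} C_j$ (positive already from $C_0$), the entire right-hand side of \eqref{udeffeq} is positive, whence $A_l > A_{l-1}$.

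Assembling the pieces, the strict middle inequality of each chain comes from \eqref{udeffeq}/\eqref{wdeffeq}, its tail positivity from Lemma \ref{nlem1}, and the base index from \eqref{udefeq12}, \eqref{xuiTeq} and \eqref{wTeq2} exactly as in the proof of Lemma \ref{nlem1}. I expect the only delicate point to be bookkeeping the index shifts $x_{w_{t-j}} - x_{w_{s+1+j}} = B_{j+1}$ and $x_{w_{t+1-l}} - x_{w_{s+l}} = B_l$ correctly, and checking that the range of $j$ needed (namely $j \le l-1 \le r$) stays within that where Lemma \ref{nlem1} and the sign hypothesis apply; once the factor-of-two domination is spotted, no genuine computation remains.
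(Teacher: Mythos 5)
Your proof is correct and follows the same route the paper takes: the paper's own argument for this corollary is the single remark that it follows from Eq.~\eqref{udeffeq}, Eq.~\eqref{wdeffeq} and Lemma~\ref{nlem1}, which is exactly the machinery you deploy. Your explicit check that the negative trailing term $-(k-2)(x_{w_{t+1-l}}-x_{w_{s+l}})$ in \eqref{udeffeq} is absorbed by the doubled sum $2(k-2)\sum_{j}(x_{w_{t-j}}-x_{w_{s+1+j}})$ is precisely the detail the paper leaves implicit, and it is the only point requiring care.
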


\begin{cor}\label{ncor2}
Suppose $1\leq s-t \leq 2$, $\sigma(H^{u_t})-\sigma(H_{u_s})> 0$ and $\sgn(\sigma(G_{u_{t-l}})-\sigma(G_{u_{s+l}}))=\sgn(x_{u_{t-l}}-x_{u_{s+l}})$ for $0\leq l \leq r$. Then we have : \\
$x_{u_{t-l}}-x_{u_{s+l}}< x_{u_{t-l+1}}-x_{u_{s+l-1}}< 0$ and 
$x_{w_{t-l+1}}-x_{w_{s+l}}< x_{w_{t-l+2}}-x_{w_{s+l-1}}\leq 0$ for $0\leq l \leq r+1$.
\end{cor}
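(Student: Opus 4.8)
The plan is to mirror the sign analysis behind Corollary \ref{ncor1}, reversing every inequality, since the sole change of hypothesis is that $\sigma(H_{u_s})-\sigma(H^{u_t})$ is now strictly negative. First I would harvest the signs supplied by Lemma \ref{nlem1}. Because $\sigma(H^{u_t})-\sigma(H_{u_s})>0$, part (1) gives $x_{u_{t-l}}-x_{u_{s+l}}<0$ for $0\le l\le r+1$, and part (2) gives $x_{w_{t+1-l}}-x_{w_{s+l}}\le 0$ for $1\le l\le r+1$ (with $x_{w_{t+1}}-x_{w_s}\le 0$ as well). Feeding the first of these into the standing assumption $\sgn(\sigma(G_{u_{t-l}})-\sigma(G_{u_{s+l}}))=\sgn(x_{u_{t-l}}-x_{u_{s+l}})$ yields $\sigma(G_{u_{t-l}})-\sigma(G_{u_{s+l}})\le 0$ for $0\le l\le r$. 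These sign facts already furnish the ``tails'' $x_{u_{t-l+1}}-x_{u_{s+l-1}}<0$ and $x_{w_{t-l+2}}-x_{w_{s+l-1}}\le 0$ in the conclusion.

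It remains to establish the strict monotonicity, for which I would substitute these signs into \eqref{udeffeq} and \eqref{wdeffeq} with $i=l$. The inequality for the $w$-variables is the easy one: in \eqref{wdeffeq} the leading term $2(\sigma(H_{u_s})-\sigma(H^{u_t}))$ is strictly negative, while every summand and the trailing term $\sigma(G_{u_{t-l+1}})-\sigma(G_{u_{s+l-1}})$ are nonpositive; hence the entire right-hand side is negative, and dividing by $\rho(H)+1>0$ gives $x_{w_{t-l+1}}-x_{w_{s+l}}<x_{w_{t-l+2}}-x_{w_{s+l-1}}$.

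The genuinely delicate step, and the one I expect to be the main obstacle, is the $u$-inequality, because \eqref{udeffeq} carries the trailing term $(k-2)(x_{w_{s+l}}-x_{w_{t+1-l}})$, whose sign is opposite to that of the $w$-terms appearing inside the telescoped sum; a term-by-term inspection is therefore inconclusive. The remedy is to gather all $w$-contributions before estimating: writing $b_m=x_{w_{t+1-m}}-x_{w_{s+m}}$, the quantity $2\sum_{j=0}^{l-1}(k-2)(x_{w_{t-j}}-x_{w_{s+1+j}})+(k-2)(x_{w_{s+l}}-x_{w_{t+1-l}})$ collapses to $(k-2)\bigl(2\sum_{m=1}^{l-1}b_m+b_l\bigr)$ for $l\ge 1$, which is $\le 0$ since each $b_m\le 0$ and $k\ge 2$. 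Consequently the whole right-hand side of \eqref{udeffeq} is the sum of a strictly negative leading term, the nonpositive quantities $\sigma(G_{u_{t-j}})-\sigma(G_{u_{s+j}})$, and this nonpositive $w$-contribution, hence is negative; this yields $x_{u_{t-l}}-x_{u_{s+l}}<x_{u_{t-l+1}}-x_{u_{s+l-1}}$. Combining the two monotonicity chains with the tails from Lemma \ref{nlem1}, and noting that the smallest-index ($l=0$) comparison is already the base case recorded in Lemma \ref{nlem1}, completes the proof by letting $l$ run from $0$ to $r+1$.
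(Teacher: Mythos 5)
Your proposal is correct and follows exactly the route the paper intends: the paper derives Corollary~\ref{ncor2} directly from Lemma~\ref{nlem1} together with Eq.~\eqref{udeffeq} and~\eqref{wdeffeq}, with all inequalities reversed relative to Corollary~\ref{ncor1}, which is precisely what you do. Your regrouping of the $w$-contributions in \eqref{udeffeq} into $(k-2)\bigl(2\sum_{m=1}^{l-1}b_m+b_l\bigr)$ is the right way to handle the trailing term $(k-2)(x_{w_{s+l}}-x_{w_{t+1-l}})$ whose sign opposes the summands --- a detail the paper's one-line derivation leaves implicit --- so nothing further is needed.
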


\begin{lem}\label{lem6}
Let $H=G_c(s,t)$. For $s\ge t\ge 2$, we have $\sigma(V'(H^{u_s}))+x_{u_s}> \sigma(V'(H_{u_{s+1}}))+x_{u_{s+1}}$, $\sigma(V'(H^{u_s}))\geq \sigma(V'(H_{u_s}))$.\\
\end{lem}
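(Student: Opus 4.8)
My plan is to reduce both statements to a single pendant-coordinate comparison and to do the genuine work there. Throughout put $\mathbf{x}=\mathbf{x}(H)$ and abbreviate $L=V'(H^{u_s})$, $R=V'(H_{u_s})$, so that $V(H^{u_s})=L\cup V(G_{u_s})$ and $V(H_{u_s})=R\cup V(G_{u_s})$ are disjoint unions sharing only the central block $G_{u_s}$. Hence $\sigma(V'(H^{u_s}))-\sigma(V'(H_{u_s}))=\sigma(H^{u_s})-\sigma(H_{u_s})$, and by \eqref{wTeq2} with $i=s$ we have $\sigma(H^{u_s})-\sigma(H_{u_s})=(\rho(H)+1)(x_{w_{s+1}}-x_{w_s})$. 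Thus the second assertion is \emph{exactly} $x_{w_s}\le x_{w_{s+1}}$: the pendant vertices of $e_{s+1}$ (on the shorter right side) weigh at least as much as those of $e_s$ (on the longer left side). I would isolate this inequality as the heart of the lemma.

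The first assertion then follows from the second by bookkeeping. Using $\sigma(H^{u_s})=\sigma(V'(H^{u_s}))+\sigma(G_{u_s})$, $\sigma(H_{u_{s+1}})=\sigma(V'(H_{u_{s+1}}))+\sigma(G_{u_{s+1}})$, together with $\sigma(H_{u_s})=\sigma(G_{u_s})+(k-2)x_{w_{s+1}}+\sigma(H_{u_{s+1}})$ (the block $G_{u_s}$ and the $k-2$ degree-one vertices of $e_{s+1}$ are precisely what $H_{u_s}$ contains beyond $H_{u_{s+1}}$), a short computation gives
\[
\bigl(\sigma(V'(H^{u_s}))+x_{u_s}\bigr)-\bigl(\sigma(V'(H_{u_{s+1}}))+x_{u_{s+1}}\bigr)=\bigl(\sigma(H^{u_s})-\sigma(H_{u_s})\bigr)+x_{u_s}+(k-2)x_{w_{s+1}}+\sigma(V'(G_{u_{s+1}})).
\]
Once $\sigma(H^{u_s})-\sigma(H_{u_s})\ge 0$ is known (the second assertion), every remaining summand is a nonnegative combination of positive Perron coordinates, and since $x_{u_s}>0$ the whole expression is strictly positive. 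So part~1 is immediate from part~2, and the effort concentrates on $\sigma(H^{u_s})\ge\sigma(H_{u_s})$.

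For the core comparison I would exploit the reflection about $u_s$ sending position $s+j$ to position $s-j$. For $1\le j\le t-1$ the attached graphs at $u_{s-j}$ and $u_{s+j}$ are identical copies of $S^k_c$, and $G_{r_s}$ hangs symmetrically off the centre $u_s$, so it contributes equally to the eigenequations at $u_{s-j}$ and $u_{s+j}$; by \eqref{Ssgneq} this makes the sign hypotheses of Lemma~\ref{nlem1} (hence of Corollaries~\ref{ncor1}, \ref{ncor2}) hold automatically on the matched inner range. Two structural facts then become available: from \eqref{xuiTeq}, because $\sigma(H_{u_{i+1}})-\sigma(H^{u_{i-1}})$ is decreasing in $i$, the sequence $i\mapsto x_{u_i}$ is convex along the path; and the propagation lemmas force all the symmetric differences $x_{u_{s+j}}-x_{u_{s-j}}$ and the pendant difference $x_{w_{s+1}}-x_{w_s}$ to share one common sign. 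The remaining task is simply to determine that sign, i.e. to show that the longer arm is the heavier one.

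This last step is the real obstacle, and it is not termwise. Indeed the relations above show that ``right heavier at each matched position'' is logically \emph{equivalent} to the conclusion $\sigma(H^{u_s})\ge\sigma(H_{u_s})$, so the propagation machinery only guarantees consistency of the sign pattern and does not fix it; in fact at every matched distance $j$ the shorter right arm carries the \emph{larger} coordinate, and the left arm can win only through its $s-t$ additional, highly peripheral blocks $u_0,\dots,u_{s-t-1}$. I would therefore argue globally: when $s=t$ the graph is symmetric about $u_s$, both arms coincide, and the lemma holds with equality (the base case); when $s>t$ I would combine the reflection decomposition $\sigma(H^{u_s})-\sigma(H_{u_s})=\sum_{j=1}^{t}(x\text{-weight at }s-j - x\text{-weight at }s+j)+\bigl(\text{surplus left blocks}\bigr)$ with the convexity of $i\mapsto x_{u_i}$ to bound the (nonpositive) matched deficit by the (positive) surplus weight. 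The one place demanding separate care is the outermost matched pair, where the bare loose-path endpoint $u_{s+t}$ is compared against the star centre $u_{s-t}$; verifying that this boundary term, and the convexity estimate for the surplus, together close the inequality is where essentially all the work lies.
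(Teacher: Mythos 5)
Your two reductions are correct and are, if anything, cleaner than the paper's own bookkeeping: identifying $\sigma(V'(H^{u_s}))-\sigma(V'(H_{u_s}))=\sigma(H^{u_s})-\sigma(H_{u_s})=(\rho(H)+1)(x_{w_{s+1}}-x_{w_s})$ via \eqref{wTeq2}, and the identity expressing the first assertion as the second plus $x_{u_s}+(k-2)x_{w_{s+1}}+\sigma(V'(G_{u_{s+1}}))$, both check out (the paper instead routes the first assertion through $\sigma(V'(H^{u_s}))+x_{u_s}>\sigma(H^{u_{s-1}})\ge\sigma(H_{u_{s+1}})>\sigma(V'(H_{u_{s+1}}))+x_{u_{s+1}}$, which is equivalent). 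But the proof has a genuine gap exactly where you say ``essentially all the work lies'': the sign determination is never carried out. You offer a plan (convexity of $i\mapsto x_{u_i}$, a deficit-versus-surplus estimate, a separate boundary-pair analysis) without executing any of it, and the convexity claim itself is asserted, not derived from \eqref{xuiTeq}, which controls $x_{u_{i-1}}-x_{u_{i+1}}$ rather than a second difference.

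More importantly, your diagnosis that ``the propagation machinery only guarantees consistency of the sign pattern and does not fix it'' is what leads you astray: it does fix it, and no quantitative comparison is needed. The paper's mechanism is to write
$D:=\sigma(H^{u_{s-1}})-\sigma(H_{u_{s+1}})\ge -\bigl(\sum_{i=1}^{t}(\sigma(G_{u_{s+i}})-\sigma(G_{u_{s-i}}))+(k-2)\sum_{i=2}^{t}(x_{w_{s+i}}-x_{w_{s+1-i}})\bigr)$
(the inequality coming from discarding the nonnegative surplus of the longer arm), and then to observe that by Lemma \ref{nlem1} and Eq.~\eqref{Ssgneq} \emph{every} term inside the parentheses has the same sign as $D$. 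If $D<0$, the right-hand side is nonnegative, contradicting $D\ge$ RHS; hence $D\ge 0$, and $x_{w_{s+1}}\ge x_{w_s}$ follows. This is a purely qualitative self-consistency argument — precisely the closure you believed was unavailable — so the quantitative machinery you propose is both unnecessary and, as submitted, absent. As it stands the central inequality of the lemma is unproved.
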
 
\begin{proof}
By Lemma \ref{nlem1}, we know that $\sigma(H^{u_{s-1}})-\sigma(H_{u_{s+1}})$ and $x_{u_{s+i}}-x_{u_{s-i}},x_{w_{s+i}}-x_{w_{s+1-i}}$ have common sign for $1\le i\le t$.
Note that$$\sigma(H^{u_{s-1}})-\sigma(H_{u_{s+1}})\ge -(\sum_{i=1}^{t}(\sigma(G_{u_{s+i}})-\sigma(G_{u_{s-i}}))+(k-2)\sum_{i=2}^{t}(x_{w_{s+i}}-x_{w_{s+1-i}}))$$
By \eqref{Ssgneq},
$\sigma(H_{s+i})-\sigma(H_{s-i})$ and $x_{u_{s+i}}-x_{u_{s-i}}$ have common sign, which implies $\sigma(H^{u_{s-1}})\geq\sigma(H_{u_{s+1}})$, then  $\sigma(V'(H^{u_s}))+x_{u_s}> \sigma(H^{u_{s-1}})\geq \sigma(H_{u_{s+1}})> \sigma(V'(H_{u_{s+1}}))+x_{u_{s+1}}$.

By $x_{u_{s+1}}-x_{u_{s}}=\frac{2\rho(G)+k}{\rho(G)}(x_{w_{s+1}}-x_{w_{s}})\geq 0$. Note that: 
$$\rho(H)(x_{u_{s+1}}-x_{u_{s-1}})=2(\sigma(V'(H^{u_s}))-\sigma(V'(H_{u_s})))+(k-2)(x_{w_{s+1}}-x_{w_{s}}),$$
and $\rho(G)>k$, we have $\sigma(V'(H^{u_s}))\geq \sigma(V'(H_{u_s}))$.
\end{proof}

\section{Distance spectral radius of caterpillars in $\mathbb{T}_k(m,\Delta,n)$}

In this section, we investigate the distance spectral radius of the caterpillars in $T=C_k(m^*,\Delta,a,b)$, and we give the structure of the caterpillars in $\mathbb{T}_k(m,\Delta,n)$ with the maximum distance spectral radius. Taking $\mathbf{x}=\mathbf{x}(T)$, by Lemma \ref{lem1}, we can use $x_{v_i}$ to donate the component
of $\mathbf{x}$ that corresponds to the pendant neighbor of $u_i$, and $x_{w_i}$ to donate the component of $\mathbf{x}$ that corresponds to the vertices of $e_i$ of degree $1$.Denote $G_{u_i}$ as $S_i$.

These are some important facts about the components of the vector $\mathbf{x}=\mathbf{x}(T)$.

\begin{fact}\label{1f}

Let integers $a,b,m^*$ as defined above. Then we have
$$
0<x_{u_{0}}-x_{u_{m^*}}<x_{u_{1}}-x_{u_{m^*-1}}<\cdots< x_{u_{a}}-x_{u_{m^*-a}}< x_{u_{a+1}}-x_{u_{m^*-a-1}}
$$
and 
$$
0<x_{w_{1}}-x_{w_{m^*}}<x_{w_{2}}-x_{w_{m^*-1}}<\cdots< x_{w_{a+1}}-x_{w_{m^*-a}}<x_{u_{a+1}}-x_{u_{m^*-a-1}}.
$$
\end{fact}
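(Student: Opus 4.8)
The plan is to read both chains off the one-step eigen-identities for the spine of $T=C_k(m^*,\Delta,a,b)$, using Lemma~\ref{nlem1} to fix the signs and \eqref{Ssgneq} to supply its hypotheses. Write the spine as $P_{m^*}^k=(u_0,e_1,\dots,e_{m^*},u_{m^*})$ and fix integers $s,t$ with $s+t=m^*$ and $1\le s-t\le 2$ (the value of $s-t$ being forced by the parity of $m^*$), so that the mirror pairs $(u_{t-l},u_{s+l})$ sweep through exactly $(u_i,u_{m^*-i})$ as $i=t-l$ runs from the centre down to $0$, while $(w_{t-l+1},w_{s+l})$ sweeps through $(w_j,w_{m^*+1-j})$. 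Under this identification the two displayed chains become the families $x_{u_i}-x_{u_{m^*-i}}$ and $x_{w_j}-x_{w_{m^*+1-j}}$, which are precisely the quantities that Lemma~\ref{nlem1} and Corollaries~\ref{ncor1} and \ref{ncor2} are designed to control.

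I would first settle positivity. For every mirror index $0\le i\le a$ the rooted graphs $S_i=G_{u_i}$ and $S_{m^*-i}=G_{u_{m^*-i}}$ are hyperstars of equal size --- both $S^{k}_{\Delta-2}$ when $1\le i\le a$, both trivial when $i=0$ --- so \eqref{Ssgneq} yields $\sgn(\sigma(S_i)-\sigma(S_{m^*-i}))=\sgn(x_{u_i}-x_{u_{m^*-i}})$, which is exactly the sign hypothesis demanded by Lemma~\ref{nlem1} on this range. After determining the sign of the central imbalance $\sigma(H_{u_s})-\sigma(H^{u_t})$ (positive in the configuration at hand), Lemma~\ref{nlem1}(1) and (2) propagate this common sign outward and force every difference $x_{u_i}-x_{u_{m^*-i}}$ and $x_{w_j}-x_{w_{m^*+1-j}}$ in the range to be strictly positive.

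Strict monotonicity I would extract from the one-step identities \eqref{udeffeq1} and \eqref{wdeffeq2} (equivalently, from Corollaries~\ref{ncor1} and \ref{ncor2}). Evaluated at the relevant index, \eqref{udeffeq1} expresses $\rho(H)\big((x_{u_i}-x_{u_{m^*-i}})-(x_{u_{i+1}}-x_{u_{m^*-i-1}})\big)$ through the end-chunk imbalance $\sigma(H_{u_{m^*-i}})-\sigma(H^{u_i})$ plus a $w$-correction, and \eqref{wdeffeq2} does the same for the $w$-chain; the sign of each single step is then settled by the same sign-propagation of Lemma~\ref{nlem1}, once more fed by \eqref{Ssgneq} on the symmetric core, and this delivers the strict increase up to index $a$. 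The terminal cross-comparison $x_{w_{a+1}}-x_{w_{m^*-a}}<x_{u_{a+1}}-x_{u_{m^*-a-1}}$ reduces, via \eqref{wTeq3}, which gives $x_{w_{a+1}}-x_{w_{m^*-a}}=\tfrac{\rho(H)}{2\rho(H)+k}\big((x_{u_{a+1}}-x_{u_{m^*-a-1}})+(x_{u_a}-x_{u_{m^*-a}})\big)$, to the two $u$-differences at indices $a$ and $a+1$: since both summands are positive with the larger equal to $x_{u_{a+1}}-x_{u_{m^*-a-1}}$ and $\tfrac{2\rho(H)}{2\rho(H)+k}<1$, the right-hand side stays strictly below $x_{u_{a+1}}-x_{u_{m^*-a-1}}$ once their strict ordering (the step to $a+1$ treated below) is in hand.

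The main obstacle is the terminal index $i=a+1$, where the mirror attachments are unequal: $G_{u_{a+1}}$ is trivial while $G_{u_{m^*-a-1}}=S^{k}_{\Delta-2}$. Here \eqref{Ssgneq} no longer applies and the clean inductive step of Lemma~\ref{nlem1} stops. I would treat this single step directly via \eqref{udeffeq1} and \eqref{wdeffeq2}, isolating the extra summand generated by the unmatched hyperstar $S_{m^*-a-1}$ and bounding it with the orderings already proved for $i\le a$, so as to confirm that the passage from index $a$ to $a+1$ strictly enlarges both differences. Controlling the sign of this leftover contribution is the crux on which the last inequality of each chain depends.
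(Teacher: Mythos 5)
Your proposal assembles the right tools (the mirror identities \eqref{udeffeq1}--\eqref{wdeffeq2}, Lemma \ref{nlem1}, and \eqref{Ssgneq}), but the overall strategy --- fix the sign of the central imbalance and then propagate positivity \emph{outward from the centre} via Lemma \ref{nlem1} --- does not go through. The hypothesis of Lemma \ref{nlem1} is that $\sgn(\sigma(G_{u_{t-l}})-\sigma(G_{u_{s+l}}))=\sgn(x_{u_{t-l}}-x_{u_{s+l}})$ for all $l$ up to the index reached, and this fails on the asymmetric band $m^*-b\le s+l\le m^*-a-1$, where $G_{u_{s+l}}\cong S^k_{\Delta-2}$ while $G_{u_{t-l}}$ is trivial: there one expects $x_{u_{t-l}}>x_{u_{s+l}}$ but $\sigma(G_{u_{t-l}})=x_{u_{t-l}}<\sigma(G_{u_{s+l}})$, so the signs disagree and the outward induction stalls before it reaches the indices $0\le i\le a+1$ that Fact \ref{1f} is actually about. (Indeed the mirror differences are \emph{not} monotone across that band --- compare Facts \ref{2f} and \ref{3f} --- so no one-directional propagation from the centre can arrive at the ends.) Relatedly, you never supply the anchor $x_{u_0}>x_{u_{m^*}}$: in the paper this needs a second, separate contradiction argument (assume $x_{u_0}\le x_{u_{m^*}}$, push the inequality inward with \eqref{udeffeq1} and \eqref{wdeffeq2}, and contradict the central inequality $x_{u_p}>x_{u_q}$ obtained from $\sigma(T_{u_q})>\sigma(T^{u_p})$).

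The paper then builds both chains \emph{from the outside in}: for $0\le l\le a$ the one-step identity reads $\rho(T)\bigl((x_{u_{l+1}}-x_{u_{m^*-l-1}})-(x_{u_l}-x_{u_{m^*-l}})\bigr)=2\bigl(\sigma(T^{u_l})-\sigma(T_{u_{m^*-l}})\bigr)+(k-2)(x_{w_{l+1}}-x_{w_{m^*-l}})$, and the two tails $T^{u_l}$ and $T_{u_{m^*-l}}$ are isomorphic for every $l\le a$ (each carries exactly $\min(l,a)$ hyperstars), so \eqref{Ssgneq} and the inductive hypothesis give the positive sign at every step, including $l=a$. This shows that what you single out as ``the main obstacle'' --- the unmatched hyperstar at $u_{m^*-a-1}$ in the terminal step --- is an artefact of your centre-outward framing: that hyperstar lies in the middle component $T_{u_a}^{u_{m^*-a}}$ and never enters the one-step identity for the outer differences, so the passage from index $a$ to $a+1$ is no harder than the earlier ones; yet it is precisely the step you leave unproven (``the crux on which the last inequality of each chain depends''). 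Your derivation of the final cross-comparison from \eqref{wTeq3} is correct and slightly cleaner than the paper's route through \eqref{xuw4} and $\sigma(T_{u_{a+1}})<\sigma(T^{u_{m^*-a-1}})$, but it presupposes the $u$-chain through index $a+1$, which your argument does not establish.
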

\begin{proof}

Take integers $p+q=m^*$, $1\leq q-p\leq 2$. From $0\leq a \leq b-2$, we know that $S_{p-j}\cong S_{q+j}$ or $S_{q+j}\cong S^k_{\Delta-2}$ and $S_{p-j}$ is trivial. For $0\le j\le p$,  if $x_{u_{p-j}}\leq x_{u_{q+j}}$,  then $\sigma(S_{p-j})\le \sigma(S_{q+j})$ by \eqref{Ssgneq}. 

 Suppose $\sigma(T_{u_q})-\sigma(T^{u_p})\leq 0$, 
 by Lemma \ref{nlem1}, we have $x_{u_{p-i}}\le x_{u_{q+i}}, x_{w_{p+1-i}}\le x_{w_{q+i}}$ for all $0\le i \le p$. Since $b\ge a+2$, there exists an $0\le i_0\le p$ such that $\sigma(S_{i_0})<\sigma(S_{m^*-1-i_0})$. Then we have $\sigma(T_{u_{q}})>\sigma(T^{u_p})$, a contradiction. Therefore,  $$\sigma(T_{u_q})-\sigma(T^{u_p})> 0.$$
From Eq.  \eqref{udefeq12} and \eqref{xuiTeq} and $1\leq q-p \leq 2$, we have $x_{u_{p}}> x_{u_{q}}$ and $x_{w_{p+1}} \ge x_{w_{q}}$.
  
Suppose $x_{u_0}\le x_{u_{m^*}}$. Then we can show $x_{w_1} \leq x_{w_{m^*}}$ due to $x_{u_0}=x_{w_1}$ and $x_{u_{m^*}}=x_{w_{m^*}}$. 
 
For $p+q=m^*$, by Eq. \eqref{udeffeq1} and \eqref{wdeffeq2} we have: \\
If $x_{u_j}\leq x_{u_{m^{*}-j}} \text{ and }x_{w_{j}}\le x_{w_{m*-j+1}}$ for $0\leq j\leq i$, then \\
$x_{u_{i+1}}-x_{u_{m^{*}-i-1}}\leq x_{u_{i}}-x_{u_{m^*-i}} \leq 0$, $x_{w_{i+1}}-x_{w_{m^*-i}}\leq x_{w_{i}}-x_{w_{m^*-i+1}}\leq 0$.\\ So $x_{u_{i}}\le x_{u_{m^{*}-i}}$ for $0\leq i \leq p$. It follows that $x_{u_p}\le x_{u_{q}}$, a contradiction. \\ Thus $x_{u_0}>x_{u_{m^*}}$ and $x_{w_1}>x_{w_{m^*}}$.

From $S_i\cong  S_{m^*-i}$ for $1\leq i\leq a$, we have $\sgn(\sigma(S_i)-\sigma(S_{m^*-i}))=x_{u_i}-x_{u_{m^*-i}}$ \text{for}\ $1\leq i\leq a$. 
By Eq. \eqref{udeffeq1} and \eqref{wdeffeq2}, take $s=l+1, t=m^*-l-1$, we have for $0\le l\le a$, $x_{u_{l+1}}-x_{u_{m^*-l-1}}>x_{u_{l}}-x_{u_{m^*-l}}>0$  and for $ 1\leq k\leq a$, $x_{w_{k+1}}-x_{w_{m^*-k}}>x_{w_{k}}-x_{w_{m^*-k+1}}>0$ by induction on $l$.

Note that
$\sigma(T_{u_{a+1}})=\sigma(T)-\sigma(V'(T^{u_{a+1}}))$, $\sigma(T^{u_{m^*-a-1}})=\sigma(T)-\sigma(V'(T_{m^*-a-1}))$. By\\ $\sgn(\sigma(S_i)-\sigma(S_{m^*-i}))=x_{u_i}-x_{u_{m^*-i}}\  \text{for}\ 1\leq i\leq a,$\

we have $\sigma(V'(T^{u_{a+1}}))>\sigma(V'(T_{m^*-a-1}))$, then we have $$\sigma(T_{u_{a+1}})<\sigma(T^{u_{m^*-a-1}}).$$

From  Eq. \eqref{xuw4}, we have $$(\rho(T)+1)x_{w_{a+1}}-\rho(T)x_{u_{a+1}}=\sigma(T_{u_{a+1}})<\sigma(T^{u_{m^*-a-1}})=(\rho(T)+1)x_{w_{m^*-a}}-\rho(T)x_{u_{m^*-a-1}}.$$

So $\rho(T)(x_{u_{a+1}}-x_{u_{m^*-a-1}})>(\rho(T)+1)(x_{w_{a+1}}-x_{w_{m^*-a}})$.

Thus $ x_{u_{a+1}}-x_{u_{m^*-a-1}}>x_{w_{a+1}}-x_{w_{m^*-a}}>0$.
\end{proof}

\begin{fact}\label{2f} Let $m'=m^*+a-b+1$, $p'+q'=m'$ and $1 \leq q'-p'\leq 2$. Then we have: 
$$
0<x_{u_{p'}}-x_{u_{q'}}<x_{u_{p'-1}}-x_{u_{q'+1}}<\cdots< x_{u_{a+1}}-x_{u_{m^*-b}}
$$
and 
$$
0<x_{w_{p'+1}}-x_{w_{q'}}<x_{w_{p'}}-x_{w_{q'+1}}<\cdots< x_{w_{a+2}}-x_{w_{m^*-b}}<x_{u_{a+1}}-x_{u_{m^*-b}}.
$$
\end{fact}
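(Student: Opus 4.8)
The statement has the same shape as Fact~\ref{1f}, but with the axis of symmetry moved from $m^*/2$ to $m'/2$; accordingly the plan is to reproduce the three ingredients of that proof (a base sign, a monotone chain, and a final comparison of a $u$-entry with a $w$-entry), now anchored at the adjacent pair $(p',q')$ sitting at the centre of the new axis. The crucial structural observation is that, since $1\le q'-p'\le 2$ and $p'+q'=m'=m^*+a-b+1$, the reflection $i\mapsto m'-i$ sends the inner boundary $a+1$ of the left star-cluster onto the inner boundary $m^*-b$ of the right star-cluster, and every index strictly between $a+1$ and $p'$ to a \emph{trivial} vertex of the middle section. Thus for $0\le l\le p'-a-2$ both $G_{u_{p'-l}}$ and $G_{u_{q'+l}}$ are trivial, so the sign hypothesis $\sgn(\sigma(G_{u_{p'-l}})-\sigma(G_{u_{q'+l}}))=\sgn(x_{u_{p'-l}}-x_{u_{q'+l}})$ required by Corollary~\ref{ncor1} holds automatically (both sides reduce to $x_{u_{p'-l}}-x_{u_{q'+l}}$).

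First I would fix the base sign $\sigma(H_{u_{q'}})-\sigma(H^{u_{p'}})>0$. Rather than rerun the contradiction argument of Fact~\ref{1f}, I would deduce it by monotonicity from that fact: choose the central split $p+q=m^*$, $1\le q-p\le 2$ of matching parity, for which Fact~\ref{1f} already gives $\sigma(H_{u_q})-\sigma(H^{u_p})>0$. Because $m'<m^*$ one checks $p'\le p$ and $q'\le q$, whence $H_{u_{q'}}\supseteq H_{u_q}$ and $H^{u_{p'}}\subseteq H^{u_p}$; since every Perron entry is positive, $\sigma(H_{u_{q'}})-\sigma(H^{u_{p'}})\ge\sigma(H_{u_q})-\sigma(H^{u_p})>0$, which is precisely the hypothesis of Corollary~\ref{ncor1} with $s=q'$, $t=p'$; by \eqref{udefeq12} and \eqref{xuiTeq} it also yields $x_{u_{p'}}>x_{u_{q'}}$. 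Feeding it in, together with the trivial sign hypothesis of the previous paragraph, Corollary~\ref{ncor1} delivers both increasing chains
\[
0<x_{u_{p'}}-x_{u_{q'}}<\cdots<x_{u_{a+1}}-x_{u_{m^*-b}},\qquad 0<x_{w_{p'+1}}-x_{w_{q'}}<\cdots<x_{w_{a+2}}-x_{w_{m^*-b}},
\]
which are the first two assertions of the Fact.

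It remains to prove the last inequality $x_{w_{a+2}}-x_{w_{m^*-b}}<x_{u_{a+1}}-x_{u_{m^*-b}}$. Applying \eqref{xuw4} at $i=a+2$ and at $i=m^*-b$ gives $(\rho+1)x_{w_{a+2}}-\rho x_{u_{a+1}}=\sigma(H^{u_{a+1}})$ and $(\rho+1)x_{w_{m^*-b}}-\rho x_{u_{m^*-b}}=\sigma(H_{u_{m^*-b}})$, so subtracting reduces the claim (using $x_{w_{a+2}}-x_{w_{m^*-b}}>0$ from the chain) to the single inequality $\sigma(H^{u_{a+1}})<\sigma(H_{u_{m^*-b}})$. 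I expect this asymmetric comparison---the short left block $H^{u_{a+1}}$ (carrying the $a$ left stars but the larger Perron entries) against the short right block $H_{u_{m^*-b}}$ (carrying all $b$ right stars but smaller entries)---to be the main obstacle, since plain containment monotonicity is inconclusive here (both blocks are dominated by $\sigma(H_{u_{q'}})$).

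The way around it is to telescope through the base split. Writing
\[
\sigma(H_{u_{m^*-b}})-\sigma(H^{u_{a+1}})=\big[\sigma(H_{u_{q'}})-\sigma(H^{u_{p'}})\big]+\big[\sigma(H^{u_{p'}})-\sigma(H^{u_{a+1}})\big]-\big[\sigma(H_{u_{q'}})-\sigma(H_{u_{m^*-b}})\big],
\]
the two correction brackets run over the \emph{trivial} middle indices $a+2\le i\le p'$ and $q'\le i\le m^*-b-1$ respectively, have equal cardinality $p'-a-1$, and pair up under $i\mapsto m'-i$ into $\sum_{i=a+2}^{p'}(x_{u_i}-x_{u_{m'-i}})$ and $(k-2)\sum_{i=a+2}^{p'}(x_{w_i}-x_{w_{m'+1-i}})$. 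By the two chains just established every summand is positive, so the combined correction is positive; added to the positive base term it gives $\sigma(H_{u_{m^*-b}})>\sigma(H^{u_{a+1}})$, closing the argument. The degenerate case $p'=a+1$ (empty middle) is immediate, since then the base split already is the pair $(a+1,m^*-b)$.
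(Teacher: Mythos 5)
Your proposal is correct and follows essentially the same route as the paper's proof: you obtain the base sign $\sigma(H_{u_{q'}})>\sigma(H^{u_{p'}})$ by containment ($p'\le p$, $q'\le q$) from Fact~\ref{1f}, invoke Corollary~\ref{ncor1} with the trivially satisfied sign hypothesis on the star-free middle indices to get both chains, and reduce the final inequality via Eq.~\eqref{xuw4} to $\sigma(H^{u_{a+1}})<\sigma(H_{u_{m^*-b}})$, proved by comparing the two middle sections termwise. Your explicit telescoping identity is just a rewritten form of the paper's decomposition $\sigma(T^{u_{p'}})=\sigma(T^{u_{a+1}})+\sigma(V'(T_{u_{a+1}}^{u_{p'}}))+x_{u_{p'}}$ and its right-hand counterpart, so there is no substantive difference.
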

\begin{proof}
From $a+2\leq b, 1\leq q-p \leq 2$ and $p+q=m^*$,  we have $p'\leq p$ and $q' \leq q$. 

By Fact 1, we have $\sigma(T^{u_{p'}})\leq \sigma(T^{u_{p}})<\sigma(T_{u_{q}}) \leq \sigma(T_{u_{q'}})$.

From  Lemma \ref{nlem1} and Corollary \ref{ncor1}, we have 
$x_{u_i}-x_{u_{m'-i}}>x_{u_{i+1}}-x_{u_{m'-i-1}}>0,$\\$ x_{w_{i+1}}-x_{w_{m'-i}}>x_{w_{i+2}}-x_{w_{m'-i-1}}>0$ for $a+1\le i\le p'-1$.

Then we have $\sigma(V'(T_{u_{a+1}}^{u_{p'}}))+x_{u_{p'}}>\sigma(V'(T_{q'}^{u_{m^*-b}}))+x_{u_{q'}}$.

Note that $$\sigma(T^{u_{a+1}})+\sigma(V'(T_{u_{a+1}}^{u_{p'}}))+x_{u_{p'}}=\sigma(T^{p'})<\sigma(T_{u_{q'}})=\sigma(T_{u_{m^*-b}})+\sigma(V'(T_{q'}^{u_{m^*-b}}))+x_{u_{q'}}.$$
we have $$\sigma(T^{u_{a+1}})<\sigma(T_{u_{m^*-b}}).$$

From Eq. \eqref{xuw4}, we have $$(\rho(T)+1)x_{w_{a+2}}-\rho(T)x_{u_{a+1}}=\sigma(T^{u_{a+1}})<\sigma(T_{u_{m^*-b}})=(\rho(T)+1)x_{w_{m^*-b}}-\rho(T)x_{u_{m^*-b}}.$$

So $\rho(T)(x_{u_{a+1}}-x_{u_{m^*-b}})>(\rho(T)+1)(x_{w_{a+2}}-x_{w_{m^*-b}})$.

Thus $x_{u_{a+1}}-x_{u_{m^*-b}}>x_{w_{a+2}}-x_{w_{m^*-b}}>0$.
\end{proof}
\begin{fact}\label{3f}Let $m''=2m^*-a-b-1$, $p''+q''=m''$ and $1\leq q''-p''\le 2$. Then we have
$$ x_{u_{m^*-b}}-x_{u_{m^*-a-1}}<x_{u_{m^*-b+1}}-x_{u_{m^*-a-2}}<\cdots< x_{u_{p''}}-x_{u_{q''}}<0$$
and $$ x_{w_{m^*-b+1}}-x_{w_{m^*-a-1}}<x_{w_{m^*-b+2}}-x_{u_{m^*-a-2}}<\cdots< x_{u_{p''+1}}-x_{u_{q''}} \leq 0$$
\end{fact}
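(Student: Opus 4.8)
The plan is to read Fact \ref{3f} as the right-hand mirror of Fact \ref{2f} and to extract both displayed chains from a single application of Corollary \ref{ncor2}. First I set $t=p''$ and $s=q''$, so that $s+t=m''$ and $1\le s-t\le 2$, and I take $r=p''-(m^*-b)$. Since every $u_i$ with $m^*-b\le i\le m^*-1$ carries a copy of $S^k_{\Delta-2}$, for each $l$ with $0\le l\le r$ both $u_{p''-l}$ and $u_{q''+l}$ lie in this block and $G_{u_{p''-l}}\cong G_{u_{q''+l}}\cong S^k_{\Delta-2}$; hence \eqref{Ssgneq} gives $\sgn(\sigma(G_{u_{p''-l}})-\sigma(G_{u_{q''+l}}))=\sgn(x_{u_{p''-l}}-x_{u_{q''+l}})$ for all those $l$. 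Thus the star-sign hypothesis of Corollary \ref{ncor2} is satisfied for free, and the whole statement reduces to verifying the single inequality $\sigma(T^{u_{p''}})-\sigma(T_{u_{q''}})>0$.

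Granting that inequality, Corollary \ref{ncor2} immediately delivers $x_{u_{p''-l}}-x_{u_{q''+l}}<x_{u_{p''-l+1}}-x_{u_{q''+l-1}}<0$ together with $x_{w_{p''-l+1}}-x_{w_{q''+l}}<x_{w_{p''-l+2}}-x_{w_{q''+l-1}}\le 0$ for $0\le l\le r+1$. Reading these from $l=r$, where $p''-l=m^*-b$ and $q''+l=m^*-a-1$, back down to $l=0$ reproduces exactly the two chains in the statement; in the last $w$-term the displayed $x_u$ should read $x_{w}$, which I will flag as a typo.

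The genuine content, and the \textbf{main obstacle}, is the global sign $\sigma(T^{u_{p''}})>\sigma(T_{u_{q''}})$, that is, that the left half of the right-hand block outweighs its right half. Here the argument must diverge from Fact \ref{2f}. In Fact \ref{2f} one had $p'\le p$ and $q'\le q$, so monotonicity of $\sigma(T^{u_i})$ increasing in $i$ and of $\sigma(T_{u_i})$ decreasing in $i$, combined with Fact \ref{1f}'s center inequality $\sigma(T^{u_p})<\sigma(T_{u_q})$, chained straight to the conclusion. For Fact \ref{3f} one instead has $p''\ge p$ and $q''\ge q$ (because $m''/2=m^*-(a+b+1)/2>m^*/2$ whenever $m^*>a+b+1$), so monotonicity pushes $\sigma(T^{u_{p''}})$ up and $\sigma(T_{u_{q''}})$ down but cannot be threaded through $\sigma(T^{u_p})<\sigma(T_{u_q})$, which now points the wrong way; and the same-index shortcut through \eqref{wTeq2} fails too, since the difference $\sigma(T^{u_{p''}})-\sigma(T_{u_{p''}})$ can itself be negative at $p''$. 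I therefore plan to establish the global sign by importing it from the boundary inequality $\sigma(T^{u_{a+1}})<\sigma(T_{u_{m^*-b}})$ proved inside Fact \ref{2f}: writing $\sigma(T^{u_{p''}})-\sigma(T_{u_{q''}})$ as this (negative) boundary term plus the two positive mass-transfer terms $\sigma(T^{u_{p''}})-\sigma(T^{u_{a+1}})$ (the trivial middle stretch and the left part of the block, now counted on the left) and $\sigma(T_{u_{m^*-b}})-\sigma(T_{u_{q''}})$ (the block stars removed from the right), and then showing that the two positive terms dominate.

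The difficulty I anticipate is precisely this domination estimate: a naive term-by-term pairing of the two regions is circular, since the inner right-block pairs it would invoke are exactly the quantities $x_{u_{m^*-b+j}}-x_{u_{m^*-a-1-j}}$ that Fact \ref{3f} is trying to sign. To sidestep the circularity I would run the telescoping through \eqref{udeffeq} and \eqref{wdeffeq} with the $x_u$- and $x_w$-chains of Facts \ref{1f} and \ref{2f} supplied as boundary data, so that the surplus contributed by the star-free middle backbone and by the outer pairs $[1,a]\leftrightarrow[m^*-a,m^*-1]$ (which Fact \ref{1f} already signs) is shown to outweigh the single negative boundary term, without ever pre-signing the inner pairs. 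Making this quantitative balance rigorous is the step I expect to require the most care.
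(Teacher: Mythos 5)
Your reduction is the right one and matches the paper: taking $t=p''$, $s=q''$, noting that every star attached at an index in $[m^*-b,\,m^*-a-1]$ is a copy of $S^k_{\Delta-2}$ so that \eqref{Ssgneq} supplies the sign hypothesis of Lemma \ref{nlem1}/Corollary \ref{ncor2} for free, the whole Fact collapses to the single inequality $\sigma(T^{u_{p''}})-\sigma(T_{u_{q''}})>0$. You also correctly diagnose why the Fact \ref{2f} chaining fails here. But that inequality is the entire content of the Fact, and you do not prove it: you propose a decomposition through the boundary term $\sigma(T^{u_{a+1}})-\sigma(T_{u_{m^*-b}})$ (which is \emph{negative} by Fact \ref{2f}) and then explicitly defer the ``quantitative balance'' showing the two positive correction terms dominate. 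As written, that is a genuine gap, and the route you sketch is harder than necessary.

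The device you are missing is the paper's self-referential estimate, which resolves exactly the circularity you flag. Pivot at $(a,\,m^*-a)$ rather than $(a+1,\,m^*-b)$, so that Fact \ref{1f} supplies a \emph{positive} middle term $\sigma(T^{u_a})-\sigma(T_{u_{m^*-a}})>0$. The two remaining correction regions, $T^{u_{p''}}\setminus T^{u_{m^*-b-1}}$ on the left and $T_{u_{q''}}\setminus T_{u_{m^*-a}}$ on the right, are mirror images about the centre $m''/2$ of the star block: they pair up as $u_{p''-i}\leftrightarrow u_{q''+i}$, $w_{p''-i}\leftrightarrow w_{q''+i+1}$, $S_{p''-i}\leftrightarrow S_{q''+i}$. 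By Lemma \ref{nlem1} and \eqref{Ssgneq} every paired difference has a common sign with $\sigma(T_{u_{q''}})-\sigma(T^{u_{p''}})$ \emph{without your having to know that sign in advance}, so the total difference of the two regions equals $c\,(\sigma(T^{u_{p''}})-\sigma(T_{u_{q''}}))$ for some $c\ge 0$. Feeding this back into the decomposition gives $\sigma(T^{u_{p''}})-\sigma(T_{u_{q''}})>-c\,(\sigma(T^{u_{p''}})-\sigma(T_{u_{q''}}))$, i.e. $(1+c)(\sigma(T^{u_{p''}})-\sigma(T_{u_{q''}}))>0$, which forces the positive sign. This is a short algebraic step, not a delicate telescoping estimate; without it (or an equivalent), your argument does not close. (Your observation that the last term of the second chain should read $x_{w_{p''+1}}-x_{w_{q''}}$ is a correct typo catch.)
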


\begin{proof}

For $p''+q''=m''$ and $1\leq q''-p''\leq 2$, 
by Lemma \ref{nlem1}, we have
$x_{u_{p''-i}}-x_{u_{q''+i}}$ having the same sign as $\sigma(T^{u_{p''}})-\sigma(T_{u_{q''}})$ for $0 \leq i \leq p''+b-m^{*}$ and $x_{w_{p''-i+1}}-x_{w_{q''+i}}$ having the same sign as $\sigma(T^{u_{p''}})-\sigma(T_{u_{q''}})$ or $x_{w_{p''-i+1}}-x_{w_{q''+i}}=0$ for $0 \leq i \leq p''+b-m^{*}$.\\
From Eq. \eqref{Ssgneq}, we have $\sigma(T^{u_{m*-b-1}})-\sigma(T^{u_{p''}})-(\sigma(T_{u_{m^*-a}})-\sigma(T_{u_{q''}}))= c(\sigma(T^{u_{p''}})-\sigma(T_{u_{q''}}))$ for some positive number c.
By Fact 1, we have $\sigma(T^{u_{a}})>\sigma(T_{u_{m^*-a}})$. 
Since $a + b<m^*-2$, we have
$$\begin{aligned}
&\sigma(T^{u_{p''}})-\sigma(T_{u_{q''}})
    = \sigma(T^{u_{p''}})-\sigma(T^{u_{a}})+\sigma(T^{u_{a}})-\sigma(T_{u_{q''}})\\
&> \sigma(T^{u_{p''}})-\sigma(T^{u_{m*-b-1}})+\sigma(T^{u_{a}})-\sigma(T_{u_{m^*-a}})-(\sigma(T_{u_{q''}})-\sigma(T_{u_{m^*-a}}))\\
    &>-(\sigma(T^{u_{m^*-b-1}})-\sigma(T^{u_{p''}})-(\sigma(T_{u_{m^*-a}}))-\sigma(T_{u_{q''}}))=-c(\sigma(T^{u_{p''}})-\sigma(T_{u_{q''}})).
\end{aligned}$$
Thus, $\sigma(T^{u_{p''}})-\sigma(T_{u_{q''}})>0$. By Corollary \ref{ncor2}, we have $x_{u_{p''-i}}-x_{u_{q''+i}}<x_{u_{p''-i+1}}-x_{u_{q''+i-1}}<0$ and $x_{w_{p''-i+1}}-x_{w_{q''+i}}< x_{w_{p''-i+2}}-x_{w_{q''+i-1}}\leq 0$  for $0 \leq i \leq p''+b-m^{*}$.
\end{proof}

\begin{lem}\label{lem5}
Let $k$-uniform hypertree $T=C_k(m^*,\Delta,a,b)$ with  $m^*>a+b$. For $ a+2 \leq  b$, we have $\rho(T)<\rho(C_k(m^*,\Delta,a+1,b-1)).$
\end{lem}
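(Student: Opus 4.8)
The plan is to exhibit the step from $T=C_k(m^*,\Delta,a,b)$ to $T'=C_k(m^*,\Delta,a+1,b-1)$ as the relocation of a single pendant hyperstar along the spine $u_0,e_1,\dots,e_{m^*},u_{m^*}$, and then to estimate the change in $\rho$ by a Rayleigh quotient. In $T$ a copy of $S:=S^k_{\Delta-2}$ sits at each position $1,\dots,a$ and $m^*-b,\dots,m^*-1$, while in $T'$ one sits at each position $1,\dots,a+1$ and $m^*-b+1,\dots,m^*-1$. Hence both share the base $H_0:=C_k(m^*,\Delta,a,b-1)$, and $T=H_0(u_{m^*-b},S)$, $T'=H_0(u_{a+1},S)$; that is, $T'$ arises from $T$ by moving the single star $S$ from $u_{m^*-b}$ to $u_{a+1}$. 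As $\Delta\ge 3$, the star $S$ is nontrivial and $\sigma(V'(S))>0$.

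Taking $\mathbf{x}=\mathbf{x}(T)$, Rayleigh's principle together with the identity \eqref{msgeq} (applied with base $H_0$, old position $u_{m^*-b}$, new position $u_{a+1}$) gives
$$\rho(T')-\rho(T)\ \ge\ \mathbf{x}^{T}\big(D(T')-D(T)\big)\mathbf{x}\ =\ 2\,\sigma(V'(S))\,\Phi,\qquad \Phi:=W(H_0,u_{a+1})-W(H_0,u_{m^*-b}).$$
Since $\sigma(V'(S))>0$, it suffices to prove $\Phi>0$. Using \eqref{msgeq1} I would rewrite $\Phi$ entirely in terms of the Perron vector of $T$,
$$\Phi=\rho(T)\big(x_{u_{a+1}}-x_{u_{m^*-b}}\big)-L\,\sigma(V'(S)),\qquad L:=d(u_{a+1},u_{m^*-b})=m^*-a-b-1,$$
so that, because Fact~\ref{2f} already gives $x_{u_{a+1}}-x_{u_{m^*-b}}>0$, the lemma reduces to showing that this positive spine gap, amplified by $\rho(T)$, strictly beats the transport cost $L\,\sigma(V'(S))$ of carrying one star across the $L$ bare steps between the two arms.

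To obtain $\Phi>0$ I would expand the gap by telescoping \eqref{udefeq12}, namely $\rho(T)(x_{u_{a+1}}-x_{u_{m^*-b}})=\sum_{i=a+2}^{m^*-b}\big(\sigma(T_{u_i})-\sigma(T^{u_{i-1}})\big)$, a sum of exactly $L$ terms, whence $\Phi=\sum_{i=a+2}^{m^*-b}\big(\sigma(T_{u_i})-\sigma(T^{u_{i-1}})-\sigma(V'(S))\big)$; each summand weighs the $\mathbf{x}$-mass to the right of the cut $e_i$ against that to its left, less one star. The decisive structural input is the hypothesis $b\ge a+2$: in $H_0$ the long (right) arm carries $b-1>a$ stars against the $a$ of the short (left) arm, so the right side holds a strict surplus of star-mass across every central cut. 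The signs of all these mass differences I would control by comparing mirrored spine positions: \eqref{Ssgneq} turns each comparison of two equal-sized stars at symmetric positions into the matching comparison of the spine values $x_{u_i}$, and Facts~\ref{1f}, \ref{2f} and \ref{3f} fix the signs and monotonicity of every such $x_{u_i}$-difference across the three relevant reflection centers. The main obstacle is precisely this sign analysis, and it is delicate because per position the Perron weight is in fact larger on the short arm (Fact~\ref{2f}), which pulls individual paired contributions the wrong way; the proof must show that the star-surplus of the long arm, together with the accumulated monotonicity of Facts~\ref{1f}--\ref{3f}, overrides this effect and forces the summed quantity $\Phi$ to be strictly positive. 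Once $\Phi>0$ is in hand, $\sigma(V'(S))>0$ immediately yields $\rho(T')>\rho(T)$.
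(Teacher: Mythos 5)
Your setup coincides with the paper's: both write $T=H_0(u_{m^*-b},S)$ and $T'=H_0(u_{a+1},S)$ for a common base $H_0$, apply Rayleigh's principle with Eq.~\eqref{msgeq} to reduce the lemma to $J:=W(H_0,u_{a+1})-W(H_0,u_{m^*-b})>0$, and record via Eq.~\eqref{msgeq1} the identity $J=\rho(T)(x_{u_{a+1}}-x_{u_{m^*-b}})-r\,\sigma(V'(S))$ with $r=d(u_{a+1},u_{m^*-b})$. Up to that point you are correct. But everything after that is a plan, not a proof, and the part you defer is exactly the hard part. Knowing $x_{u_{a+1}}-x_{u_{m^*-b}}>0$ (Fact~\ref{2f}) is a purely qualitative statement; the lemma needs the quantitative inequality $\rho(T)(x_{u_{a+1}}-x_{u_{m^*-b}})>r\,\sigma(V'(S))$, and nothing in your writeup supplies the missing multiplicative leverage. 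Your telescoping $J=\sum_{i=a+2}^{m^*-b}\bigl(\sigma(T_{u_i})-\sigma(T^{u_{i-1}})-\sigma(V'(S))\bigr)$ is a correct identity, but the individual summands are \emph{not} all positive: for a cut just to the left of the long arm (say $a=0$, $b=2$, $m^*$ large), the left component contains almost the whole spine while the right component is a bounded piece, so $\sigma(T_{u_i})-\sigma(T^{u_{i-1}})$ is negative there. You acknowledge this and say the star surplus of the long arm ``must override'' it in the sum, but you give no mechanism for that cancellation; Facts~\ref{1f}--\ref{3f} by themselves only control signs of mirrored differences and do not bound the size of the negative tail against the positive head of your sum.

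For comparison, the paper closes this gap with two genuinely quantitative inputs that have no counterpart in your proposal: it introduces the quantity $c'(r)=W_0(V'(T_{u_{a+1}}^{m^*-b}),u_{p'+1})+r|V(T^{u_{a+1}})|$ and proves $\rho(T)>c'(r)$ by checking that every row sum of $D(T)$ exceeds $c'(r)$; and it decomposes $J=J_1+J_2+J_3+rx_{u_{m^*-b}}$ according to a partition of $V(H_0)$ into the two outer arms, the segment between $u_{m^*-b}$ and $u_{m^*-a-1}$, and the segment between $u_{a+1}$ and $u_{m^*-b}$, bounding each piece below using Facts~\ref{1f}--\ref{3f} together with the unweighted transport costs $W_0(\cdot,\cdot)$. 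Combining these yields $2J>(\rho(T)-c'(r))(x_{u_{a+1}}-x_{u_{m^*-b}})>0$. If you want to complete your route, you would need to prove an estimate of comparable strength on the partial sums of your telescoped series; as written, the claim $J>0$ is asserted rather than established, so the proof is incomplete.
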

\begin{proof}
Let $T'$ be the graph obtained from $T$ by moving all pendant edges at ${u_{m*-b}}$ from  to ${u_{a+1}}$.  Then $T'\cong C_k(m^*,\Delta,a+1,b-1)$. 

Let $r=d(u_{m^*-b},u_{a+1})$ and take $\mathbf{x}=\mathbf{x}(T)$, $H=T\setminus \{V'(S_{m^*-b})\}$\\

By Eq. \eqref{msgeq}, we have the following.\\
 $$\frac{1}{2}(\rho(T')-\rho(T))= \sigma(V'(S_{m^*-b}))(W(H,u_{a+1})-W(H,u_{m^*-b})).$$

To show $\rho(T')>\rho(T)$, we only need to show $J=W(H,u_{a+1})-W(H,u_{m^*-b})>0$. 

Let $c'(r)=W_0(V'(T_{u_{a+1}}^{m^*-b}),u_{p'+1})+r|V(T^{u_{a+1}})|$. By Fact 3, $x_{u_{a+1}}>x_{u_{m^*-b}}$.

To show $J>0$, we only need to show the followings: 

(1). $\rho(T)> c'(r)$;

(2). $2J>(\rho(T)- c'(r))(x_{u_{a+1}}-x_{u_{m^*-b}})$.

For (1), since $\rho(T)$ is bounded below the minimum sum of the row of $D(T)$, it suffices to prove that the sum of each row of $D(T)$ is greater than $c'(r)$.

For $0\le j\le a$, $$\begin{aligned}
W_0(T,u_j)&\ge W_0(T_{u_{a+1}},u_a)+W_0(T^{u_a},u_j)+W_0(V'(T_{u_a}^{u_{a+1}}),u_a)\\
&>W_0(T_{m^*-a-1},u_a)+W_0(T_{u_a}^{u_{m^*-b}},u_a)>c'(r).
\end{aligned}$$
Then, 
$W_0(T,w_j)=W_0(T,u_j)+|V(T_{u_j})|-1>c'(r)$.

For pendant vertices $v_j$ of $u_j$, $W_0(T,v_j)-W_0(T,u_j)=|V(T)|-k>0$. 

For $m^*-b\le j\le m^*, u_j,v_j,w_j$ it can be proved by a similar argument.

For $a+1\le j\le m^*-b-1$, we have $W_0(V'(T^{m^*-b}_{u_{a+1}}),u_j)\geq W_0(V'(T_{u_{a+1}}^{m^*-b}),u_{p'+1})$, then$$\begin{aligned}
W_0(T,u_j)&=W_0(T^{u_{a+1}},u_j)+W_0({T_{u_{m^*-b}}},u_j)+W_0(V'(T^{m^*-b}_{u_{a+1}}),u_j)\\
&>W_0(T^{u_{a+1}},u_j)+W_0({T_{u_{m^*-a-1}}},u_j)+W_0(V'(T^{m^*-b}_{u_{a+1}}),u_j) > c'(r)
\end{aligned}$$
Then, $W_0(T,w_j)=W_0(T,u_j)+|V(T_{u_j})|-1>c'(r)$.
Thus, we have $ \rho(T)>c'(r)$.

For (2), by Eq. \eqref{msgeq1} and Fact 3, we have:  
    
       $$  \rho(T)(x_{u_{a+1}}-x_{u_{m^*-b}})=J+r\sigma(V'(S_{m^*-b}))< 2J+r\sigma(V'(S_{m^*-a-1}))-J.$$

we only need to show that: $J>-c'(r)(x_{u_{a+1}}-x_{m^*-b})+r\sigma(V'(S_{m^*-a-1})).$\\



From $V(H)=V(T^{u_{a+1}})\sqcup V(T_{u_{m^*-a-1}})\sqcup V'(T_{u_{m^*-a-1}},u_{m^*-b})\sqcup V'(T_{m^*-b}^{m^*-a-1})\sqcup \{u_{m^*-b}\}$, we know that
$J=J_1+J_2+J_3+rx_{u_{m^*-b}}$, where 
$$\begin{aligned}
J_1=&W(T^{u_{a+1}}\cup T_{u_{m^*-a-1}},u_{a+1})-W(T^{u_{a+1}}\cup T_{u_{m^*-a-1}},u_{m^*-b}),\\
J_2=&W(V'(T_{m^*-b}^{m^*-a-1}),u_{a+1})-W(V'(T_{m^*-b}^{m^*-a-1}),u_{m^*-b}), \\
J_3=&W(V'(T_{u_{a+1}}^{m^*-b}),u_{a+1})-W(V'(T_{u_{a+1}}^{m^*-b}),u_{m^*-b}). 
\end{aligned}$$
Since $\forall w\in V'(T_{m^*-b}^{m^*-a-1})$, $d_T({u_{a+1}},w)-d_T(u_{m^*-b},w)>0$, we have $J_2>0.$

By Fact 1 and Fact 3,$x_{u_{a+1}}-x_{u_{m^*-b}}>x_{u_{a+1}}-x_{u_{m^*-a-1}}>0$,\\
note that $-r|V(T^{u_{a+1}})|=W_0(T^{u_{a+1}},u_{a+1})-W_0(T^{u_{a+1}},u_{m^*-b})<0$,  we have 
 $$
\begin{aligned}
&J_1=W(T^{u_{a+1}}\cup T_{u_{m^*-a-1}},u_{a+1})-W(T^{u_{a+1}}\cup T_{u_{m^*-a-1}},u_{m^*-b})\\
    >&(W_0(T^{u_{a+1}},u_{a+1})-W_0(T^{u_{a+1}},u_{m^*-b}))(x_{u_{a+1}}-x_{u_{m^*-a-1}})+r\sigma(V'(S_{m^*-a-1})\\
    >&-r|V(T^{u_{a+1}})|(x_{u_{a+1}}-x_{u_{m^*-b}})+r\sigma(V'(S_{m^*-a-1}),
\end{aligned}$$

Note that $-(W_0(V'(T_{u_{a+1}}^{m^*-b}),u_{p'+1}))<W_0(V'(T_{u_{a+1}}^{p'+1}),u_{a+1})-W_0(V'(T_{u_{a+1}}^{p'+1}),u_{m^*-b})<0$, by Fact 2, $x_{u_{a+1}}-x_{u_{m^*-b}} >0$, we have : 
$$\begin{aligned}
    &J_3=W(V'(T_{u_{a+1}}^{m^*-b}),u_{a+1})-W(V'(T_{u_{a+1}}^{m^*-b}),u_{m^*-b})\\
    &>(W_0(V'(T_{u_{a+1}}^{p'+1}),u_{a+1})-W_0(V'(T_{u_{a+1}}^{p'+1}),u_{m^*-b}))(x_{u_{a+1}}-x_{u_{m^*-b}})\\
    &>-(W_0(V'(T_{u_{a+1}}^{m^*-b}),u_{p'+1}))(x_{u_{a+1}}-x_{u_{m^*-b}})
    \end{aligned}$$
Then $$\begin{aligned}
    J&=J_1+J_2+J_3+rx_{m^*-b}>-(r|V(T^{u_{a+1}})|+W_0(V'(T_{u_{a+1}}^{m^*-b}),u_{p'+1}))(x_{u_{a+1}}-x_{u_{m^*-b}})\\
    &+r\sigma(V'(S_{m^*-a-1})+rx_{m^*-b}\\
    &>-c'(r)(x_{u_{a+1}}-x_{m^*-b})+r\sigma(V'(S_{m^*-a-1})).
\end{aligned}$$



\end{proof}
\begin{lem}\label{nlem3}
  Let $T$ be the $k$-uniform hypertree in $\mathbb{T}_k(m,\Delta,n)$ with maximum distance spectral radius, where $\Delta\ge 3$ and $n \ge 2$. Then each edge $e$ of $T$ has at most $2$ vertices in $e$ with degree at least $2$.
\end{lem}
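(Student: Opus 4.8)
The plan is to argue by contradiction through a branch‑relocation (edge‑moving) graft transformation that keeps the triple $(m,\Delta,n)$ fixed while strictly increasing the distance spectral radius. Suppose, contrary to the claim, that some edge $e$ of the extremal tree $T$ contains three vertices $z_1,z_2,z_3$ each of degree at least $2$. Write $\mathbf{x}=\mathbf{x}(T)$ for the distance Perron vector, and for a branch vertex $z$ of $e$ let $B_z$ denote the branch hanging at $z$, i.e.\ the subhypergraph formed by the edges in $E_T(z)\setminus\{e\}$ together with the components they lead into, rooted at $z$. Deleting $B_z$ while keeping $z$ leaves a hypertree $H:=T\setminus V'(B_z)$ in which $z$ has become a pendant vertex.

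The transformation I would apply is to relocate an entire branch $B_z$ from its branch vertex $z$ to a pendant vertex $w$ of $H$, that is, to move all edges of $E_T(z)\setminus\{e\}$ from $z$ to $w$ in the sense defined just before Lemma \ref{alem}. The decisive point for staying inside $\mathbb{T}_k(m,\Delta,n)$ is that this operation merely swaps the roles of $z$ and $w$ in the degree sequence: after the move $z$ has degree $1$, $w$ has degree $d_T(z)$, and every other vertex retains its degree. Hence the degree multiset of $T$ is unchanged, so the size $m$, the maximum degree $\Delta$, and the number $n$ of vertices of degree $\Delta$ are all preserved; this is where the hypothesis $\Delta\ge 3$ enters, since it guarantees that the degree‑$1$ and degree‑$2$ vertices involved can never themselves be counted as degree‑$\Delta$ vertices, whatever the value of $d_T(z)$. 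Because $B_z$ is a pendant subtree and $w$ lies outside $B_z$, the resulting $T'$ is again a connected acyclic $k$‑uniform hypergraph, i.e.\ $T'\in\mathbb{T}_k(m,\Delta,n)$.

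To show $\rho(T')>\rho(T)$ I would take $\mathbf{x}=\mathbf{x}(T)$ and invoke the difference formula \eqref{msgeq}, which here reads $\tfrac12\mathbf{x}^T(D(T')-D(T))\mathbf{x}=\sigma(V'(B_z),\mathbf{x})\,\big(W(H,w)-W(H,z)\big)$; since $\sigma(V'(B_z),\mathbf{x})>0$, Rayleigh's principle reduces everything to producing a branch vertex $z\in\{z_1,z_2,z_3\}$ and a pendant target $w$ with $W(H,w)>W(H,z)$. The natural choice is to let $w$ maximize $W(H,\cdot)$ over $V(H)$: such a maximizer is necessarily a pendant vertex of $H$ (a degree‑$1$ vertex has weighted distance sum no smaller than any neighbour, exactly the inequality $x_u\ge x_v$ recorded for pendant $u$), hence an admissible target for the degree‑preserving move above. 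With $z$ an evacuated branch vertex one then has $W(H,w)\ge W(H,z)$ for free, and a strict inequality delivers $\rho(T')>\rho(T)$, contradicting the maximality of $T$.

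The main obstacle is precisely the strictness $W(H,w)>W(H,z)$, i.e.\ excluding the degenerate case that the evacuated branch vertex $z$ is itself a weighted‑farthest vertex of $H$. Here I would use that $e$ carries at least three branches: after deleting one branch $B_{z_1}$, the other two nontrivial branches remain on the far side of the central edge $e$, so the only leaf of $H$ that sits adjacent to $e$ is $z_1$ itself, while a pendant vertex lying deep inside $B_{z_2}$ or $B_{z_3}$ reaches the opposite side at strictly greater weighted distance than $z_1$ does. Concretely one expands $W(H,w)-W(H,z_1)=\sum_{y}x_y\big(d(y,w)-d(y,z_1)\big)$ and argues that the positively weighted contributions from the branch not containing $w$ dominate, unless $H$ admits an automorphism interchanging $z_1$ with $w$; in that symmetric event I would evacuate a different one of $z_1,z_2,z_3$ instead, noting that the three branch vertices cannot all be weighted‑farthest simultaneously. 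Making this comparison rigorous is the technical crux, and I would model the estimate on the sign‑propagation arguments already used in Lemma \ref{alem} and Lemma \ref{nlem1}.
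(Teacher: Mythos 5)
Your overall strategy coincides with the paper's: assume some edge $e$ has three vertices of degree at least $2$, relocate the branch hanging at one of them to a suitably chosen pendant vertex so that the degree multiset (hence membership in $\mathbb{T}_k(m,\Delta,n)$) is preserved, and conclude via Rayleigh's principle and Eq.~\eqref{msgeq} that the distance spectral radius strictly increases. The degree-bookkeeping part of your argument is sound, as is the reduction to comparing $W(H,w)$ with $W(H,z)$.

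The genuine gap is exactly where you locate it yourself: the strictness $W(H,w)>W(H,z)$. Choosing $w$ as a global maximizer of $W(H,\cdot)$ buys the weak inequality for free but gives no leverage for strictness, and neither of your proposed escapes closes the hole. First, the claim that the positively weighted contributions from the branches not containing $w$ dominate is not true without further estimates: for $y$ inside the branch that does contain $w$, the differences $d(y,w)-d(y,z)$ can be negative of order twice the depth of that branch, so a single heavy deep branch could in principle cancel the gains; nothing in your sketch rules this out. Second, the assertion that ``the three branch vertices cannot all be weighted-farthest simultaneously'' concerns three different host graphs $H_i=T\setminus V'(B_{z_i})$ (the maximizer is taken in a different graph each time), and no argument is offered for it; moreover an automorphism interchanging $z$ and $w$ is not the only way equality $W(H,w)=W(H,z)$ can occur. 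The paper sidesteps all of this by a sharper choice of target plus an explicit estimate: it takes a longest path $P=(u_0,e_1,\dots,e_d,u_d)$ through $e$, lets $w$ be a third vertex of $e$ off the path with $E'=E_T(w)\setminus E(P)\neq\emptyset$, assumes without loss of generality $\sigma(T_1)\ge\sigma(T_2)$ for the two end-components of $T-e$, moves $E'$ to $u_d$ on the lighter side, and then bounds the quadratic-form increment below by $d_T(u_{i+1},u_d)(\sigma(T_1)-\sigma(T_2))\sigma(V'(H_w))$ plus a correction term $((d_T(u_{i+1},u_d)+1)x_{w}+x_{u_{d-1}}-x_{u_d})\sigma(V'(H_w))$, whose strict positivity is extracted from the eigenequations at $u_d$, $u_{d-1}$ and $w$. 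Some quantitative step of this kind (or an equality-case analysis showing that $\mathbf{x}(T)$ cannot simultaneously be the Perron vector of $T'$) is required to finish your proof; as written it only yields $\rho(T')\ge\rho(T)$, which does not contradict maximality.
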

\begin{proof}
 If there is an edge $e$ of $T$, such that there are at least $3$ vertices of degree at least $2$ in $e$. 
 Let $P=(u_0,e_1,u_1,\cdots,u_{d-1},e_d,u_d)$  be one of the longest paths in $T$ which contains $e$.  Then there exists some vertex $w$ in $e$ such that $E'=E_T(w)-E(P)\neq \emptyset$. Suppose $u_i,u_{i+1}\in e$, $d_P(u_i)=d_P(u_{i+1})=2$. Let $T_1$ be the component of $T-e$ containing $u_0$, $T_2$ be the component of $T-e$ containing $u_d$, $H_{w}$ be the connected components of $T-e$ containing $w$, $T_3$ be the component of $T\setminus (\{u_i\}\cup \{u_{i+1}\})$ containing $w$. Suppose without loss of generality that $\sigma(T_1)\geq \sigma(T_2)$, then let $T'$ be the graph obtained from $T$  moving edges in $E'$ from $w$ to a pendent neighbor $u_d$. 
Then $T'\in \mathbb{T}_k(m,\Delta,n)$. Note that:  \begin{equation}\label{eq16}
	\begin{aligned}
	\frac{1}{2}&(\rho(T')-\rho(T))\ge \frac{1}{2}\mathbf{x}^T(D(T')-D(T))\mathbf{x}\\
	&>(d_T(u_{i+1},u_d))(\sigma(T_1)-\sigma(T_2))\sigma(V'(H_w))+((d_T(u_{i+1},u_d)+1)x_{w}+x_{u_{d-1}}-x_{u_d})\sigma(V'(H_w)).
\end{aligned}
\end{equation}
And $$\rho(T)(x_{u_d}-x_{u_{d-1}})=\sigma(T_1)+\sigma(T_2)-\sigma(e_d)+\sigma(T_3)+x_{u_{d-1}}-x_{u_{d}}$$
$$\rho(T)x_{w_{i_0}}>\sigma(T_{1})+\sigma(T_{2})+\sigma(T_3)-x_{w_{i_0j}}.$$Thus $$\rho(T)((d_T(u_{i+1},u_d)+1)x_{w_{i_0}}+x_{u_{d-1}}-x_{u_d})> \sigma(e_d)+x_{u_d}-x_{u_{d-1}}>0,$$
we get $\rho(T')>\rho(T)$ from \eqref{eq16}, a contradiction. Thus each edge $e$ of $T$ has at most $2$ vertices in $e$ with degree at least $2$.
\end{proof}
By a similar argument we can obtain the following conclusion: 
\begin{cor}\label{ncor3}
    Let $T$ be a caterpillar in $\mathbb{T}_k(m,\Delta,n)$, where $\Delta\ge 3$ and $n\geq 2$. Then each edge $e$ of $T$ has at most $2$ vertices in $e$ with degree at least $2$.
\end{cor}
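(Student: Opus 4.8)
The plan is to reuse the argument of Lemma~\ref{nlem3} essentially word for word, adding one verification that is special to caterpillars: that the graft transformation employed there does not leave the class of caterpillars. So I would argue by contradiction, taking $T$ to be a caterpillar in $\mathbb{T}_k(m,\Delta,n)$ of maximum distance spectral radius and supposing that some edge $e$ contains at least three vertices of degree at least $2$. Writing $T$ as a rooted product of its spine (a loose path) with hyperstars, every star edge has exactly one vertex of degree $\ge 2$, so such an $e$ must be a spine edge: its two spine vertices $u_i,u_{i+1}$ have degree $\ge 2$, and a third vertex $w\in e$ is a side vertex carrying a nonempty attached hyperstar $E'=E_T(w)-E(P)$.

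As in Lemma~\ref{nlem3} I would fix a longest path $P=(u_0,e_1,\dots,e_d,u_d)$ through $e$; both endpoints $u_0,u_d$ are pendant vertices, since otherwise $P$ could be prolonged. I then form $T'$ by moving the edges of $E'$ from $w$ to the pendant endpoint $u_d$, exactly the transformation of Lemma~\ref{nlem3}. The one genuinely new point is that $T'$ is again a caterpillar in $\mathbb{T}_k(m,\Delta,n)$: after the move, $w$ loses its star and reverts to an ordinary degree-$1$ side vertex of $e$, while the transported edges form a hyperstar seated at the far end of the spine, so $T'$ is still a rooted product of a loose path with hyperstars (if $u_d$ happens to sit inside a star hanging off a spine endpoint, one simply lengthens the spine by the edge reaching $u_d$ to absorb it). Moving a star of size $|E'|$ from $w$, of degree $1+|E'|$, to the pendant vertex $u_d$, of degree $1$, merely swaps these two degrees, so the whole degree multiset is unchanged; hence $m$, the maximum degree $\Delta$ and the number $n$ of maximum-degree vertices are all preserved and $T'\in\mathbb{T}_k(m,\Delta,n)$.

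Finally, $\rho(T')>\rho(T)$ follows from the very same estimates as in Lemma~\ref{nlem3}: inequality~\eqref{eq16}, together with the eigenequations bounding $x_{u_d}-x_{u_{d-1}}$ and $x_w$, uses only the comparison of the two sides of $e$ and the distances measured along $P$, none of which is affected by $T$ being a caterpillar. This contradicts the maximality of $\rho(T)$ within the caterpillar class and forces every edge to have at most two vertices of degree $\ge 2$. I expect the spectral inequality to require no rework at all; the main (albeit routine) obstacle is the structural bookkeeping of the previous paragraph---confirming that the relocated star keeps $T'$ a caterpillar and leaves $(m,\Delta,n)$ intact, including the mild case distinction for whether the moved star lands on a spine endpoint or on a pendant vertex of an end-star.
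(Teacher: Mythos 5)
Your proposal is correct and is essentially the paper's own proof: the paper justifies Corollary~\ref{ncor3} only by the remark that it follows ``by a similar argument'' to Lemma~\ref{nlem3}, i.e.\ by applying the same edge-moving graft transformation to a distance-spectral-radius-maximal caterpillar, which is exactly what you do. Your added bookkeeping --- checking that relocating the star from $w$ to the pendant endpoint $u_d$ keeps $T'$ a caterpillar and, since it merely swaps the degrees of $w$ and $u_d$, preserves membership in $\mathbb{T}_k(m,\Delta,n)$ --- is the only genuinely new point, and you handle it correctly.
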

Now, we are ready to identify the structure of the caterpillar in $\mathbb{T}_k(m,\Delta,n)$ with maximum distance spectral radius.  In graph theory, a branch at a vertex $u$ in a tree is a maximal subtree containing $u$ as a pendent vertex.
\begin{thm}\label{thm1}
Let $T$ be a caterpillar in $\mathbb{T}_k(m,\Delta,n)$ with $\Delta\ge 3$ and $n \ge 2$. Let $m^*=m-n(\Delta-2)$, then $\rho(T)\le \rho(C_k(m^*,\Delta,\lfloor \frac{n}{2}\rfloor,\lceil \frac{n}{2}\rceil))$, where equality holds if and only if $T\cong C_k(m^*,\Delta,\lfloor \frac{n}{2}\rfloor,\lceil \frac{n}{2}\rceil)$.
\end{thm}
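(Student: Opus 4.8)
The plan is to start from a caterpillar $T\in\mathbb{T}_k(m,\Delta,n)$ of maximum distance spectral radius and show, through a chain of graft transformations none of which decreases $\rho$, that $T$ is forced to be $C_k(m^*,\Delta,\lfloor n/2\rfloor,\lceil n/2\rceil)$. Write $\mathbf{x}=\mathbf{x}(T)$. First I would record the skeleton: since $T$ is a caterpillar, $T=P_{m_0}^k(U,\mathcal{G})$ with a loose-path spine $(u_0,e_1,\dots,e_{m_0},u_{m_0})$ and hyperstars attached at spine vertices, and by Corollary \ref{ncor3} every edge carries at most two vertices of degree $\ge 2$, so the $n$ vertices of degree $\Delta$ all sit among the $u_i$. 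Since $m^*=m-n(\Delta-2)$ is exactly the spine length one obtains when all $n$ maximum-degree vertices are interior and each carries a hyperstar of size $\Delta-2$, the goal is to drive $T$ into that normal form and only then balance the two ends.

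The bulk of the work is the normalization into the family $\{C_k(m^*,\Delta,a,b):a+b=n\}$, which I would carry out in three moves, each controlled by the edge-moving identities \eqref{msgeq}--\eqref{msgeq1} and by Lemma \ref{alem}. First, every vertex of intermediate degree $3\le d\le\Delta-1$ is eliminated: its sub-maximal attached hyperstar is straightened into an extension of a pendant end, a path-versus-star comparison obtained by iterating Lemma \ref{graft1} (a single length-$2$ pendant path beats two pendant edges, etc.), so after this only the $n$ maximum-degree vertices carry off-spine edges, each a hyperstar of size $\Delta-2$, and the remaining spine is bare. Second, no maximum-degree vertex should sit at a spine endpoint: relocating it one step inward while lengthening the bare pendant tail does not decrease $\rho$, again by a Lemma \ref{graft1}-type unbalancing. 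Third, the $n$ decorated vertices are pushed outward and made consecutive at the two ends: moving a hyperstar $G$ from a more central spine vertex $u_1$ to a more peripheral $u_2$ changes $\rho$ by a quantity whose sign is governed by \eqref{msgeq1}, the decisive term being $W(H_0,u_2;\mathbf{x})-W(H_0,u_1;\mathbf{x})$, which is positive for the more peripheral vertex. After these moves, $\rho(T)\le\rho(C_k(m^*,\Delta,a,b))$ for some $a+b=n$.

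The final move is the balancing of the two clusters. Whenever $b\ge a+2$, Lemma \ref{lem5} gives $\rho(C_k(m^*,\Delta,a,b))<\rho(C_k(m^*,\Delta,a+1,b-1))$, and iterating this drives $(a,b)$ to $(\lfloor n/2\rfloor,\lceil n/2\rceil)$, the unique maximizer inside the $C_k$ family. For the equality statement I would track that each transformation above is strict unless it acts on a graph already in the target shape; since $T$ was chosen extremal, no such transformation can be strict, and hence $T\cong C_k(m^*,\Delta,\lfloor n/2\rfloor,\lceil n/2\rceil)$.

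The step I expect to be the main obstacle is the normalization, and within it the exact positioning of the hyperstars. The difficulty is that relocating a hyperstar perturbs both the pairwise distances and the Perron vector at once, so the sign of $\rho(H_2)-\rho(H_1)$ is accessible only through the weight inequalities of Lemma \ref{alem} and \eqref{msgeq1}; verifying a hypothesis such as $\sigma_{H_1}(V'(G_{r_1}))+x_{u_1}\ge\sigma_{H_1}(H)$ at each relocation, and keeping these weight comparisons valid as the structure is successively rearranged, is precisely where the monotonicity of the spine Perron entries encoded in Facts \ref{1f}--\ref{3f} (or its analogue for the current graph) must be invoked with care. Deciding that the clusters stop just inside the endpoints rather than at them, and that they coalesce consecutively rather than spreading, is the delicate balance that these sign estimates must pin down.
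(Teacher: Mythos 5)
Your overall strategy coincides with the paper's: reduce the extremal caterpillar to the family $C_k(m^*,\Delta,a,b)$ by edge-moving transformations whose monotonicity is controlled by Eq.~\eqref{msgeq1} and Lemma~\ref{alem}, and then balance the two end clusters with Lemma~\ref{lem5}; the positioning of the hyperstars into two consecutive clusters adjacent to the ends of a diameter path, with the direction of each move chosen by a without-loss-of-generality comparison of $\sigma(T^{u_i})$ and $\sigma(T_{u_i})$ so that the weight hypothesis of Lemma~\ref{alem} can be met, is exactly what the paper does. The one step that does not work as you describe it is the elimination of vertices of intermediate degree $3\le d\le\Delta-1$ by ``iterating Lemma~\ref{graft1}'': that lemma compares $H_u(s,t)$ with $H_u(s+1,t-1)$, i.e.\ two \emph{bare} pendant loose paths attached at the \emph{same} vertex, whereas here the offending pendant edge sits at an interior spine vertex and would have to be transported across a decorated spine to a different vertex, so the hypotheses of Lemma~\ref{graft1} are never satisfied and no iteration of it performs the move. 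The paper instead moves a single branch edge from $u_i$ to the far pendant vertex $u_d$ of a diameter path and certifies $\rho(T')>\rho(T)$ directly by Lemma~\ref{alem}, which already contradicts extremality; no multi-step ``straightening'' is needed, and Facts~\ref{1f}--\ref{3f} are not invoked in the normalization at all (they enter only through Lemma~\ref{lem5} in the final balancing). With that substitution your outline agrees with the paper's proof.
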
   
\begin{proof}
Assume $T$ to be a caterpillar with the maximum distance spectral radius in $\mathbb{T}_k(m,\Delta,n)$, and let $\mathbf{x}=\mathbf{x}(T)$. 
Suppose that $P=(u_0,e_1,u_1,\cdots,u_{d-1},e_d,u_d)$ is a diameter path of $T$. By  Corollary \ref{ncor3}, for $w\in P\setminus (\displaystyle\cup_{i=1}^{d-1}\{u_i\})$, $d_T(w)=1$.

Let $W$ be the set of vertices of degree $\Delta$ in $T$. Suppose that $d_T(u_{i})\ge 3$ for some $u_{i}\in V(T)\setminus W$.  Then there exists a branch $T_i$ of $T$ at $u_{i}$ containing an edge $e \in E_{T}(v_i)\backslash E(P)$. Suppose without loss of generality that $\sigma(T^{u_{i}})\ge \sigma(T_{u_{i}})$.  Let $T'$ be the graph obtained from $T$ by moving edge $e_{i}$ from $u_{i}$ to $u_d$, then $T'$ is a caterpillar in $\mathbb{T}_k(m,\Delta,n)$. By Lemma \ref{alem} $\rho(T')>\rho(T)$, a contradiction. Thus, the degree of any vertex of $V(T)\setminus W$ is at most two.

If $H$ has no vertex of degree $2$, then we have $T\cong C_k(m^*,\Delta,\lfloor \frac{n}{2}\rfloor,\lceil \frac{n}{2}\rceil)$. Otherwise, suppose that $d_T(u_1)=d_T(u_{d-1})=2$ and $d_T(u_j)=\Delta$ for some $j$ with $2\le j\le d-2$.   
Suppose without loss of generality that $\sigma(T^{u_j})\ge \sigma(T_{u_j})$. Let $T''$ be the graph obtained from $T$ by moving the pendant edges at $u_j$ from $u_j$ to $u_{d-1}$, then $T''$ is a caterpillar in $\mathbb{T}_k(m,\Delta,n)$. By Lemma \ref{alem}, $\rho(T)<\rho(T'')$, a contradiction. Thus, $\max(d_T(u_{2}),d_T(u_{d-1}))=\Delta$. Without loss of generality, suppose $d_T(u_{d-1})=\Delta$.

Assume that $d_{u_1}=2$ and let $U$ be the sets of vertices of degree $2$ in $T$. If $T\setminus U$ has exactly one component with size greater than $1$, then $T\cong C_k(m,{\Delta},0,n)$. By Lemma \ref{lem5}, we have $\rho(T)=\rho(C_k(m^*,{\Delta},0,n))<\rho(C_k(m^*,\Delta,\lfloor \frac{n}{2}\rfloor,\lceil \frac{n}{2}\rceil))$, a contradiction. Therefore, $T\setminus U$ has at least two non-trivial components and there are vertices $u_i$ and $u_j$ with $2\le i<j\le d-2$ such that $d_T(u_i)=\Delta$ and $d_T(u_j)=2$. 
If $\sigma(T^{u_i})\ge \sigma(T_{u_i})$, let $T_1'$ be the graph obtained from $T$ by moving the pendant edges at $u_i$ from $u_i$ to $u_{j}$. By Lemma \ref{alem} we have $\rho(T_1')>\rho(T)$, a contradiction. If $\sigma(T^{u_i})< \sigma(T_{u_i})$, let $T_2'$ be the graph obtained from $T$ by moving the pendant edges at $u_i$ from $u_i$ to $u_{1}$, by Lemma \ref{alem} we have $\rho(T_2')>\rho(T)$, a contradiction. Then $d_{u_1}=\Delta$, and $T\setminus U$ has at least two components with size greater than $1$.

Suppose $T\setminus U$ has at least $3$ components with size greater than $1$. Then there are three vertices $u_i,u_j,u_l$ with $2\le i<j<l\le d-2$ in $T$ such that $d_T(u_j)=\Delta, d_T(i)=d_T(l)=2$.   
Suppose without loss of generality that $\sigma(T^{u_j})\ge\sigma(T_{u_j})$. Let $T'''$ be the graph obtained from $T$ by moving the pendant edges at $u_j$ from $u_j$ to $u_{l}$, then $T'''$ is a caterpillar in $\mathbb{T}_k(m,\Delta,n)$. By Lemma \ref{alem} $\rho(T''')>\rho(T)$, a contradiction.Thus $T\setminus U$ has exactly $2$  components with size greater than $1$, then $T\cong C_k(m^*,\Delta,a,b)$, for some $a+b=n, a\ge1,b\ge 1$. By Lemma \ref{lem5}, $T\cong C_k(m^*,\Delta,\lfloor \frac{n}{2}\rfloor,\lceil \frac{n}{2}\rceil).$
\end{proof}

\section{Distance spectral radius of trees in $\mathbb{T}_k(m,\Delta,n)$}
In the following section, we turn our attention to finding the extreme graph within the class $\mathbb{T}_k(m,\Delta,n)$ that maximizes the distance spectral radius. We begin by considering the graph $H=G_c(s,t)$. Set $\mathbf{x}=\mathbf{x}(H)$, following the notation used in Lemma \ref{lem1}, we denote $x_{v_i}$ for the component of $\mathbf{x}$ pertaining to the pendant neighbor of $u_i$ for $i\ne s$, set one of these vertices as $v_i$, and let $x_{w_i}$ be the component of $\mathbf{x}$ that corresponds to the vertices in $e_i$ of degree 1.

For $i=1,\cdots,c$, let $h_i, f_i$ be two vertices of a pendant edge $e_{s,i}$ of $k$-uniform hypergraph $G_i$ with $d_{G_i}(h_i)=1$ and $d_{G_i}(f_i)>1$ if $|E(G_i)|>1$. $F_i=\{f_i\}$ for $|H_i|=1$, and $F_i$ is the non-trivial connected component of $H_i-e_{s,i}$ for $|E(H_i)|>1$.
The rooted graph $G_{r_s}$ of $G_c(s,t)$ is the graph obtained from $G_1,\ldots,G_c$ identifying $h_1,\ldots,h_c$ as a new vertex $r_s$, which is labeled as the root of $G_{r_s}$. For the graph $H=G_c(s,t)$, set $\mathbf{x}=\mathbf{x}(H)$, let $q_i$ be one of the vertices of degree one in $e_{s,i}\setminus({h_i}\cup {f_i})$. By Lemma \ref{lem1}, we denote $x_{q_i}$ the component of $\mathbf{x}$ that corresponds to any vertex of degree one in $e_{s,i}\setminus(\{h_i\}\cup \{f_i\})$. We have following lemma holds: 
\begin{lem}\label{lem7}Setting $|E(G_1)|>1$, for $s\ge t\ge 2$,  we have $$\rho(G_c(s+1,t-1))>\rho(G_c(s,t))$$ holds if one of the following conditions satisfied:  (i) $c=1$; (ii) $c\ge 2$, $G_1$ is a loose path and $|E(G_i)|=1$ for $2\le i\le c$; (iii) $c\ge 2$ and there is a vertex of $V(G_{1})$ with at least $c$ adjacent pendant edges in $G_c(s,t)$.
\end{lem}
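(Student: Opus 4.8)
The plan is to realize the passage from $G_c(s,t)$ to $G_c(s+1,t-1)$ as a single interchange of two branches and then to apply the swapping identity \eqref{msgeq2} together with Rayleigh's principle. Write $H_1=G_c(s,t)$ and $H_2=G_c(s+1,t-1)$. Both hypergraphs consist of the same loose path $P_{s+t}^k$ carrying a copy of $S_c^k$ at each of its internal vertices, the only difference being that the special rooted graph $G_{r_s}$ sits at $u_s$ in $H_1$ and at $u_{s+1}$ in $H_2$. Hence, letting $H_0$ be the common base obtained by deleting the nonroot vertices of the two branches at $u_s$ and $u_{s+1}$, we have $H_1=H_0(u_s,u_{s+1},G_{r_s},S_c^k)$ and $H_2=H_0(u_s,u_{s+1},S_c^k,G_{r_s})$, so $H_2$ is exactly the swap of $H_1$. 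Taking $\mathbf{x}=\mathbf{x}(H_1)$ and combining \eqref{msgeq2} with $\rho(H_2)\ge \mathbf{x}^TD(H_2)\mathbf{x}$ and $\rho(H_1)=\mathbf{x}^TD(H_1)\mathbf{x}$ yields
\begin{align*}
\tfrac12\big(\rho(H_2)-\rho(H_1)\big)\ge \big(\sigma(V'(G_{u_s}))-\sigma(V'(G_{u_{s+1}}))\big)\big(W(H_0,u_{s+1})-W(H_0,u_s)\big),
\end{align*}
and it suffices to show that both factors on the right are positive.

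For the second factor I would split $V(H_0)$ along the edge $e_{s+1}$: each vertex on the $u_0$-side is one step farther from $u_{s+1}$ than from $u_s$, while each vertex on the $u_{s+t}$-side is one step nearer. Summing the eigenweights against these $\pm1$ distance changes gives
\begin{align*}
W(H_0,u_{s+1})-W(H_0,u_s)=\big(\sigma(V'(H^{u_s}))+x_{u_s}\big)-\big(\sigma(V'(H_{u_{s+1}}))+x_{u_{s+1}}\big),
\end{align*}
which is precisely the quantity shown to be positive in Lemma \ref{lem6} (applicable since $s\ge t\ge2$). This settles the second factor without reference to the three hypotheses.

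The main obstacle is the first factor, namely $\sigma(V'(G_{u_s}))>\sigma(V'(G_{u_{s+1}}))$: the special branch must outweigh the hyperstar $S_c^k$. This comparison is genuinely delicate, because the two branches are rooted at distinct vertices and $S_c^k$ is the most concentrated configuration for a given number of edges while sitting at the positionally favoured vertex $u_{s+1}$; consequently, although $G_{r_s}$ always has strictly more edges than $S_c^k$ (the assumption $|E(G_1)|>1$ forces at least $c+1$ edges), the larger edge count need not by itself produce a larger nonroot weight once the differing root positions are accounted for. The three hypotheses are exactly what supply the extra margin, and I would treat them in turn. Under (i), $S_c^k$ degenerates to a single pendant edge whereas $G_1$ carries at least two edges, so the surplus is immediate. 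Under (ii), I would match the $c-1$ single edges of $G_{r_s}$ against $c-1$ edges of $S_c^k$ and use the loose path inside $G_1$, whose vertices lie at distance $\ge2$ from $u_s$ and hence carry extra nonroot weight, to dominate the remaining hyperstar edge. Under (iii), the vertex of $G_1$ bearing $\ge c$ pendant edges lies at distance $\ge1$ from $u_s$, so its pendant edges are more peripheral than the $c$ edges of $S_c^k$ and, together with the path leading to it and the branches $G_2,\dots,G_c$, produce a strict surplus.

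In each case the quantitative estimate would be carried out through the pendant-value identity \eqref{apuueq}, the hyperstar comparison \eqref{Ssgneq}, and the monotonicity $x_u\ge x_v$ for a degree-one vertex $u$ lying in the same edge as $v$, reducing the claim to distance- and edge-counting against the positional deficit; this bookkeeping is where I expect the real work to lie. Once $\sigma(V'(G_{u_s}))>\sigma(V'(G_{u_{s+1}}))$ is in hand, both factors are positive and $\rho(H_2)>\rho(H_1)$ follows, as required.
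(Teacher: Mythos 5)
Your reduction is the right frame and matches the paper's for cases (i) and (iii): write the move as a swap of the branches at $u_s$ and $u_{s+1}$, apply \eqref{msgeq2} with $\mathbf{x}=\mathbf{x}(G_c(s,t))$ and Rayleigh's principle, dispose of the factor $W(H_0,u_{s+1})-W(H_0,u_s)=\sigma(V'(H^{u_s}))+x_{u_s}-\sigma(V'(H_{u_{s+1}}))-x_{u_{s+1}}$ by Lemma \ref{lem6}, and reduce everything to $\sigma(V'(G_{u_s}))>\sigma(V'(G_{u_{s+1}}))$. But that last inequality is the entire content of the lemma, and you do not prove it. Worst is case (i), which you declare ``immediate'' because $G_{r_s}$ has more edges than the single pendant edge $S_1^k$. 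It is not: there is no a priori entrywise comparison between the Perron entries of $V'(G_{u_s})$ and $x_{v_{s+1}}$, since the two branches hang at different vertices of the path and Lemma \ref{lem6} controls only the aggregate $\sigma(V'(H^{u_s}))+x_{u_s}$ versus $\sigma(V'(H_{u_{s+1}}))+x_{u_{s+1}}$, not $x_{u_s}$ versus $x_{u_{s+1}}$ or the branch entries themselves. The paper's proof of exactly this point for $c=1$ occupies the longest computation in the lemma: it writes the eigenequations at the four vertices $f_1$, $q_1$, $z$ (a deepest vertex of $F_1$) and $v_{s+1}$, forms the weighted combination $x_{f_1}+(k-2)x_{q_1}+(k-1)x_z-(k-1)x_{v_{s+1}}$, and bounds $(\rho+4(k-1))(\sigma(V'(G_{u_s}))-\sigma(V'(G_{u_{s+1}})))$ from below by a positive quantity. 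Nothing of that sort is sketched in your proposal, and the tools you name (\eqref{apuueq}, \eqref{Ssgneq}, the degree-one monotonicity) do not by themselves deliver it, e.g.\ \eqref{apuueq} does not even apply to $q_1$ when $e_{s,1}$ is not a pendant edge.

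A secondary divergence: for case (ii) the paper does not perform the branch swap at all. It moves the single edge $e$ of the loose path $G_1$ adjacent to $f_1$ from $f_1$ to $v_{s+1}$, which already realizes $G_c(s+1,t-1)$, and then verifies positivity of a different bilinear expression via a distance inequality of the form $d_H(w,u_s)+3d_H(w,f_1)+2(k-2)d_H(q_1,w)+(k-1)d_H(q_2,w)-kd_H(v_{s+1},w)\ge 0$. Your plan of matching $c-1$ pendant edges and letting the tail of the loose path dominate the remaining hyperstar edge is a plausible heuristic for the swap route, but it is not an argument, and it is not what the paper does. In short: correct skeleton, but the quantitative core --- establishing $\sigma(V'(G_{u_s}))>\sigma(V'(G_{u_{s+1}}))$ under each of the three hypotheses --- is missing, and in case (i) it is asserted to be trivial when it is in fact the hardest step.
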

\begin{proof}
Take $H=G_c(s,t)$, $\mathbf{x}=\mathbf{x}(H)$. For $(i)$, Let $H'$ be the graph obtained from $H$ by moving $E_{G_{u_s}}(u_s)$ from $u_s$ to $u_{s+1}$, moving $E_{G_{u_{s+1}}}(u_{s+1})$ from $u_{s+1}$ to $u_{s}$, then  $H'\cong G_1(s+1,t-1)$.
Using Eq. \eqref{msgeq2}, we have
\begin{align*}
\frac{1}{2}(\rho(H')-\rho(H))&\ge \frac{1}{2}\mathbf{x}^T(D(H')-D(H))\mathbf{x}\\
&=(\sigma(V'(G_{u_{s}}))-\sigma(V'(G_{u_{s+1}}))(\sigma(V'(H^{u_{s}}))+x_{u_s}-\sigma(V'(H_{u_{s+1}}))-x_{u_{s+1}}).
\end{align*}
By Lemma \ref{lem6}, we have $\sigma(V'(H^{u_{s}}))+x_{u_s}>\sigma(V'(H_{u_{s+1}}))+x_{u_{s+1}}$. Hence, to prove $\rho(G')>\rho(G)$, it suffices to show that $\sigma(V'(G_{u_{s}}))-\sigma(V'(G_{u_{s+1}}))>0$.

Choose $z\in V(G_{u_s})$ such that $d_H(z,f_1)=\max_{v\in V(F_1)}d_H(v,f_1)=d$, then $z$ is in a pendant edge $e'$, let the set of pendant vertices in $e'$ be $Z$. From the eigenequation of $G$ we have$$\begin{aligned}
&\rho(G)x_{f_1}=d(k-1)x_z+3(k-1)x_{v_{s+1}}+x_{u_s}+W(V'(G_{u_s}\setminus Z),f_1)+W(H\setminus (V(G_{u_s})\cup V'(G_{u_{s+1}})),f_1),\\
&\rho(G)x_{q_1}\ge (d+1)(k-1)x_z+3(k-1)x_{v_{s+1}}-x_{q_1}+x_{u_s}\\
&\ \ +W(V'(G_{u_s}\setminus Z),f_1)+W(G\setminus (V(G_{u_s})\cup V'(G_{u_{s+1}})),q_1),\\
&\rho(G)x_z=(k-2)x_z+(d+3)(k-1)x_{v_{s+1}}+(d+1)x_{u_s}+W(V'(G_{u_s}\setminus Z),z)+W(H\setminus (V(G_{u_s})\cup V'(G_{u_{s+1}})),z),\\
	&\rho(G)x_{v_{s+1}}\le (d+3)(k-1)x_z+(k-2)x_{v_{s+1}}+2x_{u_s}+W(V'(G_{u_s}\setminus Z),f_1)\\
 &\ \ +3\sigma(V'(G_{u_s}\setminus Z))+W(G\setminus (V(G_{u_s})\cup V'(G_{u_{s+1}})),v_{s+1})
\end{aligned}$$

Note that for $w \in V(H)\setminus(V(G_{u_s})\cup V'(G_{u_{s+1}}))$,

$$d_H(f_1,w)+(k-2)d_H(q_1,w)+(k-1)d_H(z,w)-(k-1)d_H(v_{s+1},w)\ge 0,$$ we have $$\begin{aligned}
&\rho(G)(x_{f_1}+(k-2)x_{q_1}+(k-1)x_z-(k-1)x_{v_{s+1}})\\
\ge &(1-k^2)x_z+(d+3)(k-1)^2x_{v_{s+1}}\\
- &(k-2)x_{q_1}+(k-1)(W(V'(G_{u_s}\setminus Z),z)-3\sigma(V'(G_{u_s}\setminus Z))).
\end{aligned}$$
Then by $\sigma(V'(G_{u_s}))=\sigma(V'(G_{u_s}\setminus Z))+(k-1)x_z$ and $\sigma(V'(G_{u_s}))>x_{q_1}$, we have: $$\begin{aligned}
	&(\rho(G)+4(k-1))(\sigma(V'(G_{s}))-\sigma(V'(G_{s+1})))\\
 &\ge \rho(G)(x_{f_1}+(k-2)x_{q_1}+(k-1)(x_z-x_{v_{s+1}}))
	+4(k-1)(\sigma(V'(G_{s}))-\sigma(V'(G_{s+1})))\\
        &>\rho(G)(x_{f_1}+(k-2)x_{q_1}+(k-1)(x_z-x_{v_{s+1}}))+3(k-1)(\sigma(V'(G_{s}))+(k-1)x_{q_1}-4(k-1)\sigma(V'(G_{s+1}))\\
	&> (d-1)(k-1)^2x_{v_{s+1}}+(k-1)(W(V'(G_{u_s}\setminus Z),z))>0.
\end{aligned}$$
Thus $\sigma(V'(G_{s}))-\sigma(V'(G_{s+1}))>0$.

For $(ii)$, let $e$ be the adjacent edge of $f_1$ different from $e_{1}$, $H''$ be the graph obtained from $H$ by moving edge $e$ from $f_1$ to $v_{s+1}$, then $H''\cong G_c(s+1,t-1)$. Then \begin{equation}\label{eq14}
\begin{aligned}
	\frac{1}{2}(\rho(H'')-\rho(H))&\ge \frac{1}{2}\mathbf{x}^T(D(H'')-D(H))\mathbf{x}\\
        &=(\sigma(F_1)-x_{f_1})(\sigma(V'(H^{u_s}))+x_{u_s}+3x_{f_1}+2(k-1)x_{q_1}+(c-1)(k-1)x_{q_2})\\
        &-(\sigma(F_1)-x_{f_1})(\sigma(V'(H_{u_s}))-(k-2)x_{w_{s+1}}+kx_{v_{s+1}})\\
	&> (\sigma(F_1)-x_{f_1})(x_{u_s}+3x_{f_1}+2(k-2)x_{q_1}+(k-1)x_{q_2}-kx_{v_{s+1}})
\end{aligned}
\end{equation}
For any $w\in V(G)\setminus (V(e_{s,1})\cup V(e_{s,2})\cup V(e_{s+1,1}))$, we have $$
d_H(w,u_s)+3d_H(w,f_1)+2(k-2)d_H(q_1,w)+(k-1)d_H(q_2,w)-kd_H(v_{s+1},w)\ge 0$$
Thus $$\begin{aligned}
&\rho(G)(x_{u_s}+3x_{f_1}+2(k-2)x_{q_1}+(k-1)x_{q_2}-kx_{v_{s+1}})\\
&\ge (k-2)x_{u_s}+(k-5)x_{f_1}+(k-2)(k-4)x_{q_1}+(2k-3)(k-1)x_{q_2}+(8k^2-11k+4)x_{v_{s+1}},
\end{aligned}$$
and then $$\begin{aligned}
&(\rho(G)+1)(x_{u_s}+3x_{f_1}+2(k-2)x_{q_1}+(k-1)x_{q_2}-kx_{v_{s+1}})\\
&\ge (k-1)x_{u_s}+(k-2)x_{f_1}+(k-2)^2x_{q_1}+2(k-1)^2x_{q_2}+4(2k-1)(k-1)x_{v_{s+1}}>0.
\end{aligned}$$
Therefore, $x_{u_s}+3x_{f_1}+2(k-2)x_{q_1}+(k-1)x_{q_2}-kx_{v_{s+1}}>0$, from \eqref{eq14}, we have $\rho(G'')>\rho(G)$.

For $(iii)$, let $y\in G_1$ such that $Y$ is a hyperstar $S^k_c$ which edges in $E(Y)$ are pendant edge of $y$. Denote $V'(Y)=V(Y)-\{y\}$, by Lemma \ref{lem1}, we denote $x_{y'}$ for the component of $\mathbf{x}$ pertaining to the vertices in $V'(Y)$.
Let $H'''$ be the graph obtained from $H$ by moving $E_{G_{u_s}}(u_s)$ from $u_s$ to $u_{s+1}$, moving $E_{G_{u_{s+1}}}(u_{s+1})$ from $u_{s+1}$ to $u_{s}$, then $H'''\cong G_c(s+1,t-1)$. Then by Eq. \eqref{msgeq2} \begin{equation}\label{eq15}
\begin{aligned}\frac{1}{2}(\rho(H''')-\rho(H))&\ge \frac{1}{2}\mathbf{x}^T(D(H''')-D(H))\mathbf{x}\\
	&=(\sigma(V'(G_{u_s}))-\sigma(V'(G_{u_{s+1}})))(\sigma(V'(H^{u_{s}}))+x_{u_s}-\sigma(V'(H_{u_{s+1}}))-x_{u_{s+1}}).
\end{aligned}
\end{equation}
By Lemma \ref{lem6}, we have $\sigma(V'(H^{u_{s}}))+x_{u_s}> \sigma(V'(H_{u_{s+1}}))+x_{u_{s+1}}$. To show $\rho(G''')>\rho(G)$, we only need to show that $$\sigma(V'(G_{u_s}))\ge \sigma(V'(G_{u_{s+1}})).$$
Take 
$$
\begin{aligned}
    V_1&=V'(H^{u_s}),V_2=V'(H_{u_{s+1}}),V_3= V(e_{s+1}),V_4= V(\cup_{j=2}^cF_j),\\
    V_5&= V(F_1\setminus V'(Y)),V_6= N_{G_{u_s}}(u_s)\setminus \cup_{j=1}^c \{f_i\},V_7= V'(Y),V_8= V'(G_{u_{s+1}})
\end{aligned}
$$

Let $I=N_{G_{u_{s}}}(u_s)\cup V'(Y)$. Note that for $v\in V_5\cup V_7$, $d_H(v,q_i)\geq d_H(v,f_i)$ and $d_H(y,f_1)\geq 0$, we have: 
$$\begin{aligned}
    WD(V_1;I,V'(G_{u_{s+1}}))&=c(k-1)W(V'(H^{u_s}),u_s)+(d_H(y,f_1)+1)\sigma(V'(H^{u_s}))>0,\\
    WD(V_2;I,V'(G_{u_{s+1}}))&=c(k-1)W(V'(H_{u_{s+1}}),u_{s+1})+(d_H(y,f_1)+4)\sigma(V'(H_{u_{s+1}}))>0,\\
    WD(V_3;I,V'(G_{u_{s+1}}))&=c(k-1)\big ((d_H(y,f_1)+4)x_{u_{s+1}}
    +(k-2)(d_H(y,f_1)+3)x_{w_{s+1}}+(1+d_H(y,f_1))x_{u_s}\big )>0,\\
    WD(V_4;I,V'(G_{u_{s+1}}))&> WD(V_4;V'(Y),V'(G_{u_{s+1}}))\geq 0,\\
    WD(V_5;I,V'(G_{u_{s+1}}))&\geq \sum_{i=1}^c(k-1)W(V_5,x_{f_i})+ WD(V_5;V'(Y),V'(G_{u_{s+1}}))\\
    &= (k-1)\sum_{i=1}^c\big(W(F_1\setminus  V'(Y),y)
    +(d_H(f_1,f_i)-2)\sigma(F_1\setminus  V'(Y))\big)>-2c(k-1)\sigma(F_1\setminus  V'(Y)),\\
    WD(V_6;I,V'(G_{u_{s+1}}))&\geq \sum_{i=1}^c(k-2)(ck-k-c)x_{q_i}\geq 0,\\
    WD(V_7;I,V'(G_{u_{s+1}}))&\geq \sum_{i=1}^c(k-1)W(V_7,x_{f_i})+ WD(V_7;V'(Y),V'(G_{u_{s+1}}))\\
    &\geq (k-1)c\big((k-1)c+(2-3k)\big)x_{y'},\\
    WD(V_8;I,V'(G_{u_{s+1}}))&\geq (5ck-5c+k) \sigma(V'(G_{u_{s+1}}))>5c(k-1)\sigma(V'(G_{u_{s+1}})).
\end{aligned}$$

By $I\subseteq V'(G_{u_s})$, and $V(G)=\bigsqcup_{i=1}^{8} V_i$,
we have : \begin{equation}\label{neq2}
    \begin{aligned}
&\rho(G)(\sigma(I)-\sigma(V'(G_{u_{s+1}})))=\sum_{i=1}^8 WD(V_i;I,V'(G_{u_{s+1}}))\\
&>c(k-1)\big(-2\sigma(F_1\setminus  V'(Y))+((c-3)k-c+2)x_{y'}+5\sigma(V'(G_{u_{s+1}}))\big).
\end{aligned}\end{equation}
From $\sigma(V'(G_{u_s}))-\sigma(F_1\setminus  V'(Y))> \sigma( V'(Y))$, we have \begin{equation*}
    \begin{aligned}
    &(\rho(G)+2c(k-1))(\sigma(V'(G_{u_s}))-V'(G_{u_{s+1}}))\\
  &  > c(k-1)\big(2\sigma( V'(Y))+   ((k-1)c+(2-3k))x_{y'}+3\sigma(V'(G_{u_{s+1}}))\big)\\
&>c(k-1)\big((3(k-1)(c-1)-1)x_{y'}+3\sigma(V'(G_{u_{s+1}}))\big)\\
&>3c(k-1)\sigma(V'(G_{u_{s+1}}))>0
\end{aligned}\end{equation*}

which implies that $\sigma(V'(G_{u_s}))>\sigma(V'(G_{u_{s+1}}))$. Thus, $\rho(G''')>\rho(G)$.
\end{proof}
Now, we are ready to identify the structure of the hypertree in $\mathbb{T}_k(m,\Delta,n)$ with maximum distance spectral radius.
\begin{thm}\label{thm2}
Let $T$ be the $k$-uniform hypertree in $\mathbb{T}_k(m,\Delta,n)$ with maximum distance spectral radius, where $\Delta\ge 3$ and $n \ge 2$, then $T\cong C_k(m^*,\Delta,\lfloor \frac{n}{2}\rfloor,\lceil \frac{n}{2}\rceil)$, where $m^*=m-n(\Delta-2)$.
\end{thm}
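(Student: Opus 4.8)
The plan is to reduce Theorem \ref{thm2} to the caterpillar case already settled in Theorem \ref{thm1}. Since Theorem \ref{thm1} pins down $C_k(m^*,\Delta,\lfloor n/2\rfloor,\lceil n/2\rceil)$ as the unique maximizer among \emph{caterpillars} in $\mathbb{T}_k(m,\Delta,n)$, it suffices to prove that the global maximizer $T$ is a caterpillar. I would fix $\mathbf{x}=\mathbf{x}(T)$ throughout and argue by contradiction, assuming $T$ is not a caterpillar.

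First I would record the coarse structure of $T$. By Lemma \ref{nlem3} every edge of the extremal $T$ contains at most two vertices of degree $\geq 2$, so $T$ realizes as a rooted product $P(U,\mathcal{G})$ of a diametral loose path $P=(u_0,e_1,\dots,e_d,u_d)$ with branches $G_{u_i}$ at the internal vertices. Next, exactly as in the proof of Theorem \ref{thm1}, I would use Lemma \ref{alem} to regularize degrees: if some $u\notin W$ (where $W$ is the set of degree-$\Delta$ vertices) had degree $\geq 3$, then moving an offending edge toward the end of the heavier side would strictly increase $\rho$, a contradiction; hence every vertex outside $W$ has degree at most two, and in particular any vertex carrying a nontrivial branch lies in $W$ and contributes exactly $c:=\Delta-2$ edges at its root. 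Under these constraints, $T$ being a caterpillar amounts to every branch off $P$ being a hyperstar, i.e.\ having depth one.

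The heart of the proof is to rule out a branch of depth $\geq 2$. Suppose $G_{u_s}$ is such a branch and let $t\leq s$ be the distance from $u_s$ to the nearer endpoint of $P$. If $t\leq 1$, concatenating the segment of $P$ from $u_0$ to $u_s$ with a depth-$\geq 2$ path inside the branch yields a path strictly longer than $P$, contradicting diametrality. If instead $t\geq 2$ (so $s\geq t\geq 2$), I would present the local configuration as a member of the family $G_c(s,t)$ with $c=\Delta-2$ and distinguished rooted graph $G_{r_s}$ equal to the deep branch, and invoke Lemma \ref{lem7} to obtain $\rho(G_c(s+1,t-1))>\rho(G_c(s,t))$, producing a hypertree in $\mathbb{T}_k(m,\Delta,n)$ of strictly larger distance spectral radius and contradicting the maximality of $T$. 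The case split of Lemma \ref{lem7} is matched to $T$ as follows: condition (i) covers $\Delta=3$ (where $c=1$); for $\Delta\geq 4$ one uses condition (iii) when the branch root carries at least $c$ pendant edges, and otherwise first reshapes the extended arm of the branch into a loose path, via Lemma \ref{alem} together with the sign rule \eqref{Ssgneq} to certify that the reshaping does not decrease $\rho$, so that condition (ii) applies.

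The main obstacle is precisely this last matching. Lemma \ref{lem7} is stated for the rigid family $G_c(s,t)$, in which every internal path vertex except $u_s$ carries an identical star $S^k_{\Delta-2}$, whereas the extremal $T$ need not a priori exhibit this uniform environment around the deep branch (the degree-$\Delta$ vertices may be clustered with a bare middle, and neighbouring branches need not coincide), nor need the deep branch itself have one of the three admissible shapes. Making the local picture genuinely agree with $G_c(s,t)$ — localizing to the star-cluster containing $u_s$, re-running the weight computation \eqref{msgeq2} for the actual configuration using Lemma \ref{lem6} and the monotonicity in \eqref{Ssgneq}, and normalizing the deep branch into form (ii) or (iii) while never leaving $\mathbb{T}_k(m,\Delta,n)$ and never decreasing $\rho$ — is where the genuine work lies. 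Once every branch off $P$ is shown to be a hyperstar, $T$ is a caterpillar, and Theorem \ref{thm1} immediately yields $T\cong C_k(m^*,\Delta,\lfloor n/2\rfloor,\lceil n/2\rceil)$.
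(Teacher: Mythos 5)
Your overall strategy is the paper's: constrain $T$ via Lemma \ref{nlem3} and the degree regularization, kill non-hyperstar branches with Lemma \ref{lem7}, and finish with Theorem \ref{thm1}. But you have correctly diagnosed, and then left open, the one step that actually carries the proof: showing that a non-caterpillar maximizer really is isomorphic to some $G_{\Delta-2}(s,t)$ whose distinguished rooted graph $G_{r_s}$ has one of the three shapes admitted by Lemma \ref{lem7}. The paper closes this gap with a concrete argument that your proposal does not contain. Let $U$ be the set of degree-$2$ vertices of $T$. For each vertex $u$ of degree $\Delta$ with branches $T_1,\dots,T_\Delta$, the paper proves $U\subseteq V(T_i)$ for a single $i$: if $w_1\in V(T_1)$ and $w_2\in V(T_2)$ both lay in $U$ with, say, $\sigma(T_1)\ge\sigma(T_2)$, then moving $e_3,\dots,e_\Delta$ from $u$ to $w_2$ stays inside $\mathbb{T}_k(m,\Delta,n)$ (it swaps the roles of $u$ and $w_2$, preserving the degree counts) and strictly increases $\rho$ by Lemma \ref{alem}. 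This concentration of $U$ into one branch is exactly what forces every internal vertex of the backbone other than one distinguished $u_s$ to carry the star $S^k_{\Delta-2}$, i.e.\ it manufactures the rigid environment $G_{\Delta-2}(s,t)$ that Lemma \ref{lem7} requires. Without it, your plan to ``re-run the weight computation for the actual configuration'' is a restatement of the obligation, not a discharge of it.

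The same omission affects your matching of the deep branch to conditions (ii)/(iii) of Lemma \ref{lem7}. The paper does not ``reshape the arm into a loose path''; it splits on whether $m=n(\Delta-1)+1$ or $m>n(\Delta-1)+1$. In the first case $U=\emptyset$, so $V'(G_{r_s})$ must contain a degree-$\Delta$ vertex, and (using the concentration of $U$) a deepest such vertex carries at least $\Delta-2$ pendant edges, giving (iii). In the second case, either a degree-$\Delta$ vertex lies in $V'(G_{r_s})$ and (iii) applies as before, or every vertex of $V(G_{r_s})$ has degree $1$ or $2$, in which case ``not a caterpillar'' forces exactly one $G_{i_0}$ to be a loose path and the remaining $G_i$ to be single edges, which is precisely (ii). Your alternative --- normalizing the branch by further applications of Lemma \ref{alem} and \eqref{Ssgneq} --- is both unjustified (such moves could leave $\mathbb{T}_k(m,\Delta,n)$ or fail to increase $\rho$ in the needed direction) and unnecessary once the structure is pinned down. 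In short: the skeleton is right, but the decisive lemma-matching step is acknowledged rather than proved, so the argument as written does not establish the theorem.
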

\begin{proof}
Let $\mathbf{x}=\mathbf{x}(T)$. By Lemma \ref{nlem3},each edge $e$ of $T$ has at most $2$ vertices in $e$ with degree at least $2$, then by  an argument analogous to the one given in the  proof of Theorem \ref{thm1}, the degree of any vertex of degree less than $\Delta$ is at most $2$.

Let $U$ be the set of vertices of degree $2$ in $T$ and $|U|=k_2$. For any $u\in V(T)$ with degree $\Delta$, let $E_T(u)=\{e_1,e_2,\cdots,e_{\Delta}\}$ and $T_i$ be the branch of $T$ at $u$ containing $e_i$, for $1\le i\le \Delta$. Now, we prove that $U\subseteq V(T_i)$ for some $i$ with $1\le i\le \Delta$. Suppose for the sake of contradiction that  $w_1,w_2\in U,\ w_1\in V(T_1)$, and $w_2\in V(T_2)$. Without loss of generality, assume that $\sigma(T_1)\ge \sigma(T_2)$. Let $T''$ be the graph obtained from $T$ by moving the edges $e_3,\cdots e_{\Delta}$ from $u$ to $w_2$. Then $T''\in \mathbb{T}_k(m,\Delta,n)$, by Lemma \ref{alem}, $\rho(T'')>\rho(T)$, a contradiction. Then we have $U\subseteq V(T_i)$ for some $i$ with $1\le i\le \Delta$.

Suppose that $T$ is not a caterpillar, we have $T\cong G_{\Delta-2}(s,t)$ for some nontrivial hypergraph $G_{r_s}$, and some integers $s$ and $t$ with $s\ge t\ge 2$. Obviously, $G_{\Delta-2}(s+1,t-1)\in \mathbb{T}_k(m,\Delta,n)$. If $\Delta=3$, then by Lemma \ref{lem7} $(i)$, $\rho(G_{\Delta-2})(s+1,t-1)>\rho(T)$, a contradiction.

Suppose that $\Delta\ge 4$. If $m=n(\Delta-1)+1$, then $U=\emptyset$, implying that $V'(G_{r_s})$ contains a vertex $v$ with degree $\Delta$, then $v$ has at least $\Delta-2$ adjacent pendant edges in $T$. By Lemma \ref{lem7} $(iii)$, $\rho(G_{\Delta-2}(s+1,t-1))>\rho(T)$, a contradiction. Suppose that $m>n(\Delta-1)+1$, then $k_2\ge 1$,  if $V'(G_{r_s})$ contains a vertex of degree $\Delta$, then there is a vertex in $V'(G_{r_s})$ with at least $\Delta-2$ adjacent pendant edges in $T$, according to Lemma \ref{lem7} $(iii)$, $\rho(G_{\Delta-2}(s+1,t-1))>\rho(T)$, a contradiction. Therefore, $V'(G_{r_s})$ contains no vertex of degree $\Delta$ in $T$, that is, each vertex of $V(G_{r_s})$ is of degree $1$ or $2$. Since $T$ is not a caterpillar, we have $G_{i_0}$ is a loose path for some $1\leq i_0\leq c$ and $E(G_i)=1$ for $i\neq i_0$, according to Lemma \ref{lem7} $(ii)$,  $\rho(G_{\Delta-2}(s+1,t-1))>\rho(T)$, a contradiction. Thus $T$ is a caterpillar in $\mathbb{T}_k(m,\Delta,n)$, then by Theorem \ref{thm1}, $T\cong C_k(m^*,\Delta,\lfloor \frac{n}{2}\rfloor,\lceil \frac{n}{2}\rceil).$
\end{proof}



We have shown that if $T$ is a $k$-uniform hypertree in $\mathbb{T}_k(m,\Delta,n)$ with maximum distance spectral radius, then it must be isomorphic to $C_k(m^*,\Delta,\lfloor \frac{n}{2} \rfloor, \lceil \frac{n}{2} \rceil)$ where $m^*=m-n(\Delta-2)$. Furthermore, if $T$ is a $k$-uniform hypertree with $m$ edges and $n$ vertices of maximum degree having maximum distance spectral radius, where $n\geq 2$. Take $H_1=C_k(m_1,\Delta,\lfloor \frac{n}{2} \rfloor, \lceil \frac{n}{2} \rceil)$, $H_2=C_k(m_2,\Delta-1,\lfloor \frac{n}{2} \rfloor, \lceil \frac{n}{2} \rceil)$, where $|E(H_1)|=|E(H_2)|=m$, $\Delta\leq \frac{m-1}{n}$, then by similar argument as in the proof of Theorem \ref{thm2} and using Lemma \ref{alem}, we have: 

$$\rho (H_1)<\rho (H_2), $$ we can obtain the following corollary:

\begin{cor}
	If $T$ is a $k$-uniform hypertree with $m$ edges and $n$ vertices of maximum degree having maximum distance spectral radius, where $2\le n\le \lfloor \frac{m-1}{2}\rfloor$. Then $\rho(T)\le \rho (C_k(m^*,3,\lfloor \frac{n}{2} \rfloor, \lceil \frac{n}{2} \rceil))$, with equality if and only if $T\cong C_k(m^*,3,\lfloor \frac{n}{2} \rfloor, \lceil \frac{n}{2} \rceil)$,  where $m^*=m-n$.
\end{cor}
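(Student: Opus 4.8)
The plan is to reduce the unconstrained problem to a one-parameter comparison in the maximum degree and then prove monotonicity. Any maximizer $T$ has some maximum degree $\Delta$ and lies in $\mathbb{T}_k(m,\Delta,n)$, so by Theorem \ref{thm2} it must coincide with the balanced caterpillar $H_\Delta:=C_k\big(m-n(\Delta-2),\Delta,\lfloor\frac n2\rfloor,\lceil\frac n2\rceil\big)$. Since the hypothesis excludes $\Delta=2$ (a loose path has $m-1>\lfloor\frac{m-1}{2}\rfloor$ vertices of degree $2$, incompatible with having exactly $n$ of them), the global optimizer is one of the graphs $H_\Delta$ with $3\le\Delta$, and the bound $n\le\lfloor\frac{m-1}{2}\rfloor$ is precisely what makes $\Delta=3$ admissible: it forces $m^*=m-n>n$, so the path of $C_k(m^*,3,\lfloor\frac n2\rfloor,\lceil\frac n2\rceil)$ is strictly longer than the total number of attached stars and the caterpillar is well defined. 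It therefore suffices to show that $\rho(H_\Delta)$ is strictly decreasing in $\Delta$ over the feasible range $3\le\Delta\le\frac{m-1}{n}$; the uniqueness in Theorem \ref{thm2} then promotes the resulting maximality at $\Delta=3$ to the claimed ``if and only if''.

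The heart of the matter is the single step $\rho(H_{\Delta-1})>\rho(H_\Delta)$ for $\Delta\ge 4$. Note that $H_\Delta$ and $H_{\Delta-1}$ have the same number of edges $m$ and the same number $n$ of maximum-degree vertices; passing from the former to the latter one peels a single pendant edge off each of the $n$ attached hyperstars (turning every $S^k_{\Delta-2}$ into $S^k_{\Delta-3}$) and spends the $n$ liberated edges to lengthen the pendant path from $m-n(\Delta-2)$ to $m-n(\Delta-3)$ edges. I would carry this out as a finite sequence of elementary moving-edge operations in the sense of Eq. \eqref{msgeq}: at each step one takes a pendant edge at a maximum-degree vertex $u_j$ and re-attaches it at the current path endpoint $u_0$ (respectively $u_{m^*}$), which extends the loose path by one edge and lowers $d(u_j)$ by one. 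Each such move changes $\rho$ by a quantity controlled by $W(H_0,u_0)-W(H_0,u_j)$ through Eq. \eqref{msgeq} and Eq. \eqref{msgeq1}, where $H_0$ is the hypergraph with the moved edge deleted.

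To see that every step strictly increases $\rho$ I would reuse the caterpillar machinery. A path endpoint is the farthest vertex from the bulk of the mass and hence carries the largest distance-Perron weight, while an interior maximum-degree vertex is comparatively central; Facts \ref{1f}--\ref{3f} and Lemma \ref{lem6} supply exactly the orderings of the components $x_{u_i},x_{w_i}$ and of the partial sums $\sigma(V'(H^{u_i})),\sigma(V'(H_{u_i}))$ that are needed to certify the sign $W(H_0,u_0)-W(H_0,u_j)>0$, equivalently condition (2) of Lemma \ref{alem}, for the chosen direction of the move. Spreading the stars' edges into a longer path rather than stacking them into taller stars therefore raises $\rho$ at every step, and chaining the inequalities from $\Delta$ down to $3$ yields $\rho(H_3)>\rho(H_\Delta)$ for all feasible $\Delta\ge 4$, which is the corollary.

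The step I expect to be the main obstacle is keeping the Perron-weight inequalities valid throughout the compound transformation. Facts \ref{1f}--\ref{3f} were established only for the exact caterpillars $C_k(m^*,\Delta,a,b)$, whereas the intermediate hypergraphs produced during the peeling carry partially reduced stars and an already-extended path; one must re-derive the analogous $\sigma$- and $x$-comparisons for these intermediate shapes, or else fix an order of moves that keeps each intermediate graph inside the family to which those facts apply. Concretely, I would always extend the path at the end nearer the star being shortened and always shorten from the side carrying the smaller $\sigma$-value first, so that the comparison vertices remain degree-one path ends and the monotonicity hypotheses of Lemma \ref{alem}(2) continue to hold at each stage.
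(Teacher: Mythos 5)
Your proposal is correct in outline and takes essentially the same route as the paper: reduce via Theorem \ref{thm2} to the one-parameter family $C_k(m-n(\Delta-2),\Delta,\lfloor\frac n2\rfloor,\lceil\frac n2\rceil)$ and then show $\rho$ decreases as $\Delta$ grows by peeling pendant edges off the stars onto the path ends, with each move controlled by Eq.~\eqref{msgeq} and Lemma~\ref{alem}. The technical obstacle you flag (validity of the Perron-weight comparisons for the intermediate, non-caterpillar shapes) is exactly the step the paper itself leaves to a ``similar argument,'' so your treatment is, if anything, more explicit about where the work lies.
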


We study the effects of the graft transformations in two specific cases on the distance spectral radius of connected uniform hypergraphs. Based on Lemma \ref{lem7}, we can make following conjecture.
\begin{con}
    $|E(G_{r_s})|>c$ and $d_{G_{r_s}}(r_s)\geq c$, then for $s\ge t\ge 2$, $$\rho(G_c(s+1,t-1))>\rho(G_c(s,t)).$$ 
\end{con}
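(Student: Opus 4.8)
The plan is to follow the reduction already used for parts (i) and (iii) of Lemma~\ref{lem7}. Write $H=G_c(s,t)$, $\mathbf{x}=\mathbf{x}(H)$ and $\rho=\rho(H)$. Form $H'$ from $H$ by moving $E_{G_{u_s}}(u_s)$ from $u_s$ to $u_{s+1}$ and $E_{G_{u_{s+1}}}(u_{s+1})$ from $u_{s+1}$ to $u_s$; since every internal decoration except the one at $u_s$ is a copy of $S_c^k$, this swap puts the special rooted graph $G_{r_s}$ at position $s+1$ and leaves $S_c^k$ at every other internal vertex, so $H'\cong G_c(s+1,t-1)$. By Eq.~\eqref{msgeq2},
\begin{align*}
\tfrac12\bigl(\rho(H')-\rho\bigr)\ge\bigl(\sigma(V'(G_{u_s}))-\sigma(V'(G_{u_{s+1}}))\bigr)\bigl(\sigma(V'(H^{u_s}))+x_{u_s}-\sigma(V'(H_{u_{s+1}}))-x_{u_{s+1}}\bigr),
\end{align*}
and Lemma~\ref{lem6} makes the second factor strictly positive. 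The entire conjecture therefore reduces to the single mass inequality $\sigma(V'(G_{u_s}))>\sigma(V'(G_{u_{s+1}}))$, in which $G_{u_{s+1}}=S_c^k$ contributes $\sigma(V'(G_{u_{s+1}}))=c(k-1)x_{v_{s+1}}$ with $x_{v_{s+1}}=(\rho x_{u_{s+1}}+\sigma(H))/(\rho+k)$ by Eq.~\eqref{apuueq}.

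To attack this inequality I would first exploit the two hypotheses to extract guaranteed mass from $G_{r_s}$. Let $\beta=(\rho x_{u_s}+\sigma(H))/(\rho+k)$ be the entry that a pendant vertex at $u_s$ would carry. The hypothesis $d_{G_{r_s}}(r_s)\ge c$ supplies at least $c(k-1)$ distinct first-layer neighbours of $u_s$, each of entry at least $\beta$ (carrying a sub-branch only pushes mass outward and increases the entry), while $|E(G_{r_s})|>c$ forces at least one further edge whose vertices lie at distance $\ge 2$ from $u_s$ and hence exceed $\beta$ strictly. Using Eq.~\eqref{apuueq} at both $u_s$ and $u_{s+1}$ gives $x_{v_{s+1}}-\beta=\rho(x_{u_{s+1}}-x_{u_s})/(\rho+k)$, so the target inequality is equivalent to
\begin{align*}
\sigma(V'(G_{u_s}))-c(k-1)\beta\ \ge\ c(k-1)\bigl(x_{v_{s+1}}-\beta\bigr)=c(k-1)\,\frac{\rho\,(x_{u_{s+1}}-x_{u_s})}{\rho+k}.
\end{align*}
I would bound the right-hand side from above by combining the identity $x_{u_{s+1}}-x_{u_s}=\frac{2\rho+k}{\rho}\cdot\frac{\sigma(H^{u_s})-\sigma(H_{u_s})}{\rho+1}$ (from Eq.~\eqref{wTeq2} and the computation in Lemma~\ref{lem6}) with the control on the imbalance $\sigma(H^{u_s})-\sigma(H_{u_s})$ supplied by Lemma~\ref{lem6}; for the left-hand side I would decompose $V(H)$ into pieces analogous to $V_1,\dots,V_8$ of Lemma~\ref{lem7}(iii) and turn the estimate into a sum of $WD$ terms of controllable sign, using $d_H(v,q_i)\ge d_H(v,f_i)$ along the special branch.

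The decisive step — and the reason this is stated only as a conjecture — is the quantitative balance in the imbalanced regime $s\gg t$. There $\sigma(H^{u_s})$ dominates $\sigma(H_{u_s})$, the gap $x_{u_{s+1}}-x_{u_s}$ becomes comparable to $\beta$ itself, and the deficit $c(k-1)(x_{v_{s+1}}-\beta)$ on the right grows linearly in $c$, whereas the surplus on the left that one is entitled to use may amount to as little as a single extra edge. The difficulty is compounded by the fact that the first-layer neighbours of $u_s$ may carry arbitrary sub-branches, so the clean pendant relation \eqref{apuueq} no longer pins down their entries. I expect the resolution to require a uniform lower bound on the interior entries of $G_{r_s}$ — obtained by peeling its branches outward and applying \eqref{apuueq} layer by layer — strong enough to show that one edge placed at distance $\ge 2$ from $u_s$ already outweighs $c(k-1)(x_{v_{s+1}}-\beta)$ uniformly in $c$ and in the shape of $G_{r_s}$; proving such a bound is the main obstacle.
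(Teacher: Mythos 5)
The statement you are addressing is presented in the paper as a \emph{conjecture}: the authors give no proof of it, and they state it precisely because Lemma~\ref{lem7} establishes the swap $\rho(G_c(s+1,t-1))>\rho(G_c(s,t))$ only in the special cases (i)--(iii). Your proposal reproduces, correctly, the reduction the paper itself uses in Lemma~\ref{lem7}(i) and (iii): form $H'$ by exchanging the decorations at $u_s$ and $u_{s+1}$, apply Eq.~\eqref{msgeq2}, dispose of the second factor by Lemma~\ref{lem6}, and reduce the whole problem to the single mass inequality $\sigma(V'(G_{u_s}))>\sigma(V'(G_{u_{s+1}}))=c(k-1)x_{v_{s+1}}$. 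Up to that point you are exactly on the paper's track. But you then explicitly concede that you cannot establish this inequality under the hypotheses $|E(G_{r_s})|>c$ and $d_{G_{r_s}}(r_s)\ge c$, and that inequality \emph{is} the entire content of the conjecture; everything before it is already in the paper. So what you have is an accurate diagnosis of where the difficulty sits, not a proof, and it should be graded as such.

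One concrete flaw in the partial reasoning you do offer: the claim that each first-layer neighbour of $u_s$ inside $G_{r_s}$ has entry at least $\beta=(\rho x_{u_s}+\sigma(H))/(\rho+k)$, because ``carrying a sub-branch only pushes mass outward and increases the entry,'' is unjustified and most likely backwards. Eq.~\eqref{apuueq} is derived only for a pendant neighbour $u'$ of $u$ in a pendant edge, and the paper's own monotonicity observation --- for $u$ of degree one in $e$ and any $v\in e$ one has $x_u\ge x_v$, with equality iff $d_G(v)=1$ --- says that within a fixed edge the vertices carrying sub-branches have the \emph{smaller} entries, since they are closer to the mass of their own branch. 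So the $c(k-1)$ first-layer vertices supplied by $d_{G_{r_s}}(r_s)\ge c$ cannot all be bounded below by $\beta$ in the way you propose, and the comparison with $c(k-1)x_{v_{s+1}}$ has to be argued globally (as the paper does via the $V_1,\dots,V_8$ decomposition and $WD$ estimates in Lemma~\ref{lem7}(iii)) rather than vertex by vertex. Your closing paragraph correctly identifies the imbalanced regime and the loss of control over interior entries of $G_{r_s}$ as the obstruction; that is a fair assessment of why the statement remains open.
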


About the graft transformation at different vertices, Lemma \ref{lem5} presents a special case, based on which we can propose the following question.
\begin{question}Given uniform hypergraphs $H$ and $G_r$ and the integers with $p \geq q \geq 1$, what conditions ensure that $\rho(H_{u,v}(p,q;G_r))>\rho(H_{u,v}(p+1,q-1;G_r))$?
\end{question}

Let $P_1=(u_0,e_1,u_1,e_2,u_2)$ with $|e_1|=9$, $|e_2|=3$, $H=P_2=(u,e_3,v)$ with $e_3=3$, $d_H(u)=d_H(v)=1$, $P_3=(u_3,e_4,u_4,e_5,u_5)$ with $|e_4|=3$, $|e_5|=6$. Let $H'$ be the hypergraph obtained from $H$ by attaching $u_2$, $u_3$ at $u$, $v$, respectively, and $H''$ be the hypergraph obtained form $H'$ by moving $e_1$ from $u_1$ to $u_5$. By calculation, we have $\rho(H)\approx 53.04$, $\rho(H')\approx 46.91$, then $\rho(H)>\rho (H')$, which shows that Lemma \ref{graft2} does not hold in the case of non-uniform hypergraphs.


For the $3$-uniform hypergraph $H=C_3(5,3,1,2)\in \mathbb{T}_3(8,3,3)$, let $H'$ be the non-uniform hypertree obtained from $H$ by moving a vertex of degree $1$ in $e_2$ from $e_2$ to $e_1$. By calculation, we have  $\rho(H)\approx 45.33$, $\rho(H')\approx 46.31$, then $\rho(H')>\rho(H)$. Hence, $T=C_k(m^*,\Delta,\lfloor \frac{n}{2}\rfloor,\lceil \frac{n}{2}\rceil)$ is not the hypertree with the maximal distance spectral radius among hypertrees with
fixed order $|V(T)|$, size $|E(T)|$, maximal degree $\Delta$, and number of vertices of maximum degree.  It is nature to ask the following question.
\begin{question}
Which hypertrees have the the maximum distance spectral radius among all
hypertrees with fixed order, size, maximal degree, and number of vertices of maximum degree?
\end{question}
\newrefcontext[sorting=none]
\printbibliography

\end{document}